\documentclass[12pt]{article}

\usepackage{fontspec, xunicode, xltxtra}  
\usepackage{mathtools}
\usepackage{amsmath,diagbox}
\usepackage{indentfirst}
\usepackage{mathrsfs}
\usepackage{amsfonts}
\usepackage{arydshln}
\usepackage{enumerate}
\usepackage{setspace}
\usepackage{amssymb,amsthm,cases}
\usepackage{amsbsy,pifont}
\usepackage{latexsym,diagbox,tabularx}
\usepackage{amsmath,amscd,bbm,multirow}
\usepackage{graphicx,epsfig,extarrows,dsfont,mathtools}
\usepackage[final,allcolors=blue,colorlinks=true]{hyperref}
\usepackage[normalem]{ulem}
\input xy
\usepackage{caption,subcaption} 
\usepackage{tikz}
\usepackage{tikz-cd}
\usetikzlibrary{decorations.pathreplacing,decorations.markings}
\usetikzlibrary{cd}
\usetikzlibrary{shapes.geometric}
\usetikzlibrary{arrows,arrows.meta}
\usetikzlibrary{graphs,positioning}
\usetikzlibrary{matrix,decorations.pathmorphing}
\usetikzlibrary{math,calc,intersections,through,angles,arrows.meta,shapes.geometric,shadows,quotes,spy,datavisualization,datavisualization.formats.functions,plotmarks}
\tikzset{every picture/.style={samples=300,smooth,line join=round,thick,>=stealth}}
\usepackage{enumerate}
\usepackage{fancyhdr}
\usepackage[sectionbib]{chapterbib}

\tikzset{
	on each segment/.style={
		decorate,
		decoration={
			show path construction,
			moveto code={},
			lineto code={
				\path [#1]
				(\tikzinputsegmentfirst) -- (\tikzinputsegmentlast);
			},
			curveto code={
				\path [#1] (\tikzinputsegmentfirst)
				.. controls
				(\tikzinputsegmentsupporta) and (\tikzinputsegmentsupportb)
				..
				(\tikzinputsegmentlast);
			},
			closepath code={
				\path [#1]
				(\tikzinputsegmentfirst) -- (\tikzinputsegmentlast);
			},
		},
	},
	mid arrow/.style={postaction={decorate,decoration={
				markings,
				mark=at position .5 with {\arrow[#1]{stealth}}
	}}},
}

\numberwithin{equation}{section}

\theoremstyle{plain}
\newtheorem{theorem}{Theorem}[section]

\newtheorem{corollary}[theorem]{Corollary}

\newtheorem{lemma}[theorem]{Lemma}

\theoremstyle{definition}
\newtheorem{definition}[theorem]{Definition}

\newtheorem{remark}[theorem]{Remark}

\def\XXint#1#2#3{{\setbox0=\hbox{$#1{#2#3}{\int}$}
		\vcenter{\hbox{$#2#3$}}\kern-.5\wd0}}

\DeclareMathSymbol{\subseteq}{\mathrel}{symbols}{"12}
\DeclareMathSymbol{\supseteq}{\mathrel}{symbols}{"13} 
\DeclareMathSymbol{\subsetneq}{\mathrel}{AMSb}{"28}                  \DeclareMathSymbol{\supsetneq}{\mathrel}{AMSb}{"29}    

\DeclareMathSymbol{\nsubseteq}{\mathrel}{AMSb}{"2A}                  

\DeclareMathSymbol{\nsupseteq}{\mathrel}{AMSb}{"2B}

\DeclareMathDelimiter{\langle}{\mathop}{symbols}{"68}{largesymbols}{"0A}
\DeclareMathDelimiter{\rangle}{\mathclose}{symbols}{"69}{largesymbols}{"0B}

\DeclareSymbolFont{txfontsA}{U}{txmia}{m}{it}
\SetSymbolFont{txfontsA}{bold}{U}{txmia}{bx}{it}
\DeclareFontSubstitution{U}{txmia}{m}{it}
\DeclareMathSymbol{\upalpha}{\mathord}{txfontsA}{"0B}
\DeclareMathSymbol{\upbeta}{\mathord}{txfontsA}{"0C}
\DeclareMathSymbol{\upgamma}{\mathord}{txfontsA}{"0D}
\DeclareMathSymbol{\updelta}{\mathord}{txfontsA}{"0E}
\DeclareMathSymbol{\upepsilon}{\mathord}{txfontsA}{"0F}
\DeclareMathSymbol{\upzeta}{\mathord}{txfontsA}{"10}
\DeclareMathSymbol{\upeta}{\mathord}{txfontsA}{"11}
\DeclareMathSymbol{\uptheta}{\mathord}{txfontsA}{"12}
\DeclareMathSymbol{\upiota}{\mathord}{txfontsA}{"13}
\DeclareMathSymbol{\upkappa}{\mathord}{txfontsA}{"14}
\DeclareMathSymbol{\uplambda}{\mathord}{txfontsA}{"15}
\DeclareMathSymbol{\upmu}{\mathord}{txfontsA}{"16}
\DeclareMathSymbol{\upnu}{\mathord}{txfontsA}{"17}
\DeclareMathSymbol{\upxi}{\mathord}{txfontsA}{"18}
\DeclareMathSymbol{\uppi}{\mathord}{txfontsA}{"19}
\DeclareMathSymbol{\uprho}{\mathord}{txfontsA}{"1A}
\DeclareMathSymbol{\upsigma}{\mathord}{txfontsA}{"1B}
\DeclareMathSymbol{\uptau}{\mathord}{txfontsA}{"1C}
\DeclareMathSymbol{\upupsilon}{\mathord}{txfontsA}{"1D}
\DeclareMathSymbol{\upphi}{\mathord}{txfontsA}{"1E}
\DeclareMathSymbol{\upchi}{\mathord}{txfontsA}{"1F}
\DeclareMathSymbol{\uppsi}{\mathord}{txfontsA}{"20}
\DeclareMathSymbol{\upomega}{\mathord}{txfontsA}{"21}
\DeclareMathSymbol{\upvarepsilon}{\mathord}{txfontsA}{"22}
\DeclareMathSymbol{\upvartheta}{\mathord}{txfontsA}{"23}
\DeclareMathSymbol{\upvarpi}{\mathord}{txfontsA}{"24}
\DeclareMathSymbol{\upvarrho}{\mathord}{txfontsA}{"25}
\DeclareMathSymbol{\upvarsigma}{\mathord}{txfontsA}{"26}
\DeclareMathSymbol{\upvarphi}{\mathord}{txfontsA}{"27}                   

\DeclareSymbolFont{ugmL}{OMX}{mdugm}{m}{n}
\SetSymbolFont{ugmL}{bold}{OMX}{mdugm}{b}{n}
\DeclareMathAccent{\wideparen}{\mathord}{ugmL}{"F3}

\def\pt{\partial}

\def\ra{\rightarrow}

\def\s{\subseteq}

\def\e{\epsilon}

\def\ol{\overline}

\def\vp{\varphi}
\def\lg{\langle}

\def\rg{\rangle}

\def\bf{\textbf}
\def\pt{\partial}

\def\Om{\Omega}
\def\la{\lambda}
\def\al{\alpha}

\def\de{\delta}
\def\ga{\gamma}

\def\Ga{\Gamma}
\def\La{\Lambda}

\def\ts{\times}

\def\iy{\infty}

\def\ots{\otimes}
\def\f{\frac}

\def\Lra{\Leftrightarrow}

\def\df{\mathrm d}
\def\im{\mathrm i}

\def\wt{\widetilde}
\def\wh{\widehat}

\def\esssup{\operatorname*{ess\ \! sup}}

\def\hra{\hookrightarrow}

\def\dra{\downarrow}
\def\mcL{\mathcal{L}}

\def\mcA{\mathcal{A}}

	\DeclareMathOperator{\Div}{div}

	\DeclareMathOperator{\Real}{Re}
	\DeclareMathOperator{\Image}{Im}

	\DeclareMathOperator{\dist}{dist}

	\DeclareMathOperator{\supp}{{supp}}

	\newcommand{\R}{\mathbb R}
	\newcommand{\N}{\mathbb N}

	\newcommand{\Cc}{\mathbb C}

\begin{document} 
	
	
	\title{Observability Inequalities for Degenerate Schr\"odinger Equations via a Shape Approximation Method}
	\author{Dong-Hui Yang$^a$, Hongli Sun$^b$, Weijia Wu$^c$, Yuanzhi Zhou$^a$\footnote{\small The corresponding author:  262111122@csu.edu.cn }}

	\maketitle{}
	\thispagestyle{empty}
	\thispagestyle{empty}
	
	\begin{abstract}
		   We study a two-dimensional Schrödinger equation with a boundary degeneracy. More precisely, the diffusion matrix is given by $
		   A=\operatorname{diag}(1,x_2^\alpha), \alpha\in(0,1)$, 
		   so that the partial differential operator degenerates on a portion of the boundary. The main objective is to establish a boundary observability inequality for the corresponding degenerate Schrödinger equation.
		   
		   To overcome the difficulties caused by the boundary degeneracy, we introduce a shape design and approximation procedure. By removing a thin neighborhood of the degenerate boundary, we construct a family of uniformly   Schrödinger equations on perturbed domains. We first investigate the spectral properties of the associated operators and establish the convergence of the approximate problems to the original degenerate problem. We then derive a boundary observability inequality for the uniformly  approximating equations and pass to the limit as the thickness of the removed boundary layer tends to zero. This yields a boundary observability inequality for the original degenerate Schrödinger equation.
		   
		   Our approach combines spectral analysis, separation of variables, and shape design techniques. By employing multiplier methods for the uniformly   approximating problems and passing to the degenerate limit, we establish the boundary observability inequality for the original degenerate Schrödinger equation.

		   \vspace{0.3cm}
		   
		   \noindent {\bf {Keywords:}}  Degenerate partial differential equations, separation of variables, shape design, spectral analysis.
		   
		   \vspace{0.3cm}
		   
		   \noindent {\bf {AMS subject classifications (2010):
35Q41, 93B07, 35P05, 35J70.}}~  
	\end{abstract}
	
	\section{Introduction}

	Controllability theory originated in the 1950s and has since developed into a broad and active research area in mathematics, as well as in science and engineering. In the context of partial differential equations (PDEs), controllability has attracted considerable attention due to its theoretical importance and numerous applications. In particular, controllability is closely related to unique continuation and other fundamental properties of PDEs.
	
	There is an extensive literature on the controllability of uniformly parabolic, hyperbolic, and other PDEs; see, for instance, \cite{Komornik,Lasiecka,Lasiecka1,Lions,Lu,Mercado,Privat,Weiss,Yang5,Yao,Zuazua}. A fundamental framework for studying controllability is provided by the Hilbert Uniqueness Method (HUM) introduced by J.-L. Lions, which establishes a duality between controllability and observability. More precisely, under the appropriate functional-analytic framework, the controllability of a PDE can be characterized by an observability inequality for the corresponding adjoint system. Therefore, the study of controllability naturally leads to the study of observability inequalities.
	
	The study of controllability for degenerate partial differential equations (PDEs) has developed considerably since the 2000s. Over the past two decades, a substantial body of work has been devoted to the one-dimensional case; see, for example, \cite{Akil,Alabau,Alabau1,Bai,Buffe,Cannarsa,Cannarsa1,Fragnelli,Fragnelli1,Gueye,Guo}. Several results have also been established for two-dimensional degenerate PDEs; see, for instance, \cite{Beauchard,Cannarsa2}. More recently, controllability problems for certain higher-dimensional degenerate PDEs have been investigated in \cite{Guo,Yang8,Yang,Yang6,Yang4,Yang5,Yang1,Yang2,Yang3,Yang7}. 
	
	The methods employed in these works include Carleman estimates, multiplier methods, shape design and approximation techniques, as well as separation-of-variables arguments. These approaches provide different ways to overcome the analytical difficulties caused by the degeneracy of the differential operator.
	
	Shape design is a classical and well-established research area in mathematics, with a long history and broad connections to analysis, geometry, and applied sciences. It encompasses a variety of fundamental problems, including isoperimetric inequalities, the optimization of eigenvalues and spectral gaps, and the determination of optimal domains for variational and PDE-related quantities. Shape optimization also plays an important role in applications arising in science, engineering, and industry. A substantial body of literature has been devoted to these problems; see, for example, \cite{Buttazzo,Chenais,Greco,Guo2,Guo1,Guo,He,Henrot,Privat,Tiba,Wang,Wang1}. More recently, shape optimization and domain perturbation methods have been employed in the study of controllability and observability for PDEs; see \cite{Guo,Yang6,Yang2,Yang3,Yang7}. In the context of degenerate PDEs, shape design can also serve as an approximation tool. By modifying the domain so as to remove a thin neighborhood of the degenerate set, one obtains a family of uniformly elliptic problems on perturbed domains. This provides a natural framework for transferring analytical properties from the nondegenerate approximating problems to the original degenerate problem.

	 The Schrödinger equation is a fundamental equation in mathematical physics. The controllability and observability of both uniformly elliptic and degenerate Schrödinger equations have been extensively studied; see, for example, \cite{Fragnelli1,Privat,Weiss,Zuazua}.
	 
	 In this paper, we employ a shape design method to establish a boundary observability inequality for a two-dimensional degenerate Schrödinger equation. Our approach is different from those developed in \cite{Yang,Yang8}, where Carleman estimates are the main tools for deriving observability results. Instead, our method is closer to the strategy used in \cite{Yang7}, where shape design and approximation techniques are employed.
	 
	 However, there are essential differences between the present work and \cite{Yang7}. The Schrödinger equation possesses dispersive characteristics and exhibits analytical difficulties that are different from those of parabolic and hyperbolic equations. Although it shares some features with both types of equations, the derivation of observability inequalities for Schrödinger equations remains a challenging problem.

	In this paper, we study the following degenerate Schrödinger equation:
	\begin{equation}\label{09.15.1}
		\begin{cases}
			\mathrm{i}\partial_t\varphi-\operatorname{Div}(A\nabla\varphi)=0,
			&\text{in } Q,\\
			\varphi=0,
			&\text{on } \Sigma,\\
			\varphi(0)=\varphi^0,
			&\text{in } \Omega.
		\end{cases}
	\end{equation}
	Here, $
	\Omega=(-1,1)\times(0,1), Q=\Omega\times(0,T)$, 
	where \(T>0\), and $
	\Sigma=\partial\Omega\times(0,T)$. 
	The diffusion matrix is given by 
	$$
	A=\operatorname{diag}(1,w), \mbox{ and }w=x_2^\alpha \mbox{ and }  \alpha\in(0,1),
	$$ 
	where \(\alpha\) is a fixed constant. The degeneracy occurs on the boundary part $
	\Gamma_2^0=(-1,1)\times\{0\}$. 
	The initial datum satisfies $
	\varphi^0\in H_0^1(\Omega;w)$. The weighted Sobolev space $H_0^1(\Omega;w)$ will be defined in Section \ref{S2}. We introduce the associated elliptic operator
	\begin{equation}\label{09.15.4}
		\mathcal A\varphi=-\Div(A\nabla\varphi).
	\end{equation}
	
	For later use, we denote the boundary components by
	\begin{equation*}
		\Gamma_1^{-1}=\{-1\}\times(0,1),\quad
		\Gamma_1^{1}=\{1\}\times(0,1),
	\end{equation*}
	and
	\begin{equation*}
		\Gamma_2^i=(-1,1)\times\{i\},\quad i=0,1.
	\end{equation*}
	
	The two-dimensional setting considered in this work is chosen for the convenience of presentation. The same framework is expected to extend to higher-dimensional domains with diffusion matrices of the form 
	$$
	A=\operatorname{diag}(1,\ldots,1,x_N^\alpha),
	$$ 
	which correspond to uniformly elliptic behavior in the tangential directions and a one-dimensional degeneracy in the normal direction. The essential analytical difficulties, including the weighted energy estimates, spectral approximation, and boundary observability arguments, are already present in the two-dimensional case.

We organize the paper as follows. In Section \ref{S2}, we present some preliminary results, including the construction of the relevant energy spaces, the spectral properties of the operator \(\mathcal A\), the method of separation of variables, and the existence and uniqueness of weak solutions.

In Section \ref{S3}, we introduce a shape design method to construct a family of uniformly   Schrödinger equations that approximate the original degenerate equation. We also investigate the spectral properties of the operator \(\mathcal A^{(2)}\), which play an essential role in the approximation procedure and in the analysis of the corresponding spectral decomposition. The main approximation result is given in Theorem \ref{09.17.T2}, together with its consequence in Corollary \ref{09.20.C1}.

In Section \ref{S4}, we first establish an observability inequality for the uniformly elliptic Schrödinger equations obtained through the shape design procedure. We then pass to the limit in the approximation and derive the boundary observability inequality for the original degenerate Schrödinger equation. The main observability result is stated in Theorem \ref{09.21.T3}.

In the Appendix \ref{SA}, we provide the proofs of the existence and uniqueness of weak solutions for both the original degenerate Schrödinger equation and the uniformly   approximating equations.

The main results of this paper are Theorem \ref{09.17.T2}, Corollary \ref{09.20.C1}, and Theorem \ref{09.21.T3}.

\section{Preliminary}\label{S2}

In this paper, all the spaces are defined over the complex field $\mathbb{C}$; hence, the inner products are complex-valued. If $
u=u_1+\im u_2$, 
where $u_1$ and $u_2$ are real-valued functions, then $
\overline{u}=u_1-\im u_2$ 
denotes the complex conjugate of $u$, where $\im=\sqrt{-1}$ is the imaginary unit. Moreover, $\operatorname{Re}u$ and $\operatorname{Im}u$ denote the real and imaginary parts of the complex-valued function $u$, respectively.

\subsection{Energy spaces}\label{S2.1}

Let 
\begin{equation*}
	L^2(\Om)=L^2(\Om;\Cc)=\left\{u:\Om\ra\Cc\colon \int_\Om |u|^2\df x<+\iy\right\}. 
\end{equation*}
Define
\begin{equation*}
	H^1(\Om;w)=\left\{u\in L^2(\Om)\colon \nabla u\cdot A\nabla \ol u\in L^1(\Om)\right\}. 
\end{equation*}
Its inner product and norm are defined by 
\begin{equation*}
	(u,v)_{H^1(\Om;w)}=\int_\Om \left(u\ol v+\nabla u\cdot A\nabla \ol v\right)\df x,\quad \|u\|_{H^1(\Om;w)}=(u,u)_{H^1(\Om;w)}^\f{1}{2}. 
\end{equation*}
Set 
\begin{equation*}
	H_0^1(\Om;w)= \mbox{the closure of } C_0^\iy(\Om) \mbox{ in } H^1(\Om;w). 
\end{equation*}
Denote $H^{-1}(\Om;w)$ the dual space of $H_0^1(\Om;w)$ with pivot space $L^2(\Om)$. 

Now, we define
\begin{equation*}
	H^2(\Om;w)=\left\{u\in H^1(\Om;w)\colon \mcA u\in L^2(\Om)\right\}. 
\end{equation*}
Its inner product and norm are defined by 
\begin{equation*}
	(u,v)_{H^2(\Om;w)}=(u,v)_{H^1(\Om;w)}+(\mcA u, \mcA v)_{L^2(\Om)},\quad \|u\|_{H^2(\Om;w)}=(u,u)_{H^2(\Om;w)}^\f{1}{2}. 
\end{equation*}
We emphasize that the space $H^2(\Omega;w)$ introduced above is an operator-adapted weighted space and, in general, it is different from the classical Sobolev space $H^2(\Omega)$.

It is obvious that the spaces 
\begin{equation*}
	H_0^1(\Om;w),\quad H^1(\Om;w), \quad H^2(\Om;w)
\end{equation*}
are Hilbert spaces. 

We denote 
\begin{equation*}
	D(\mcA)=H_0^1(\Om;w)\cap H^2(\Om;w). 
\end{equation*}

\begin{lemma}\label{09.15.L1}
	For all $u\in H_0^1(\Omega;w)$, we have
	\begin{equation}\label{09.15.2}
		\int_\Omega x_2^{\alpha-2}|u|^2\,dx
		\leq
		\frac{4}{(1-\alpha)^2}
		\int_\Omega x_2^\alpha |\partial_{x_2}u|^2\,dx .
	\end{equation}
	Moreover,
	\begin{equation}\label{09.15.3}
		\int_\Omega |u|^2\,dx
		\leq
		\frac{4}{(1-\alpha)^2}
		\int_\Omega \nabla u\cdot A\nabla\overline u\,dx .
	\end{equation}
\end{lemma}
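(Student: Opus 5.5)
We first prove \eqref{09.15.2} for $u\in C_0^\infty(\Omega)$ and then pass to general $u\in H_0^1(\Omega;w)$ by density. The second estimate \eqref{09.15.3} will then follow from the first with almost no extra work.

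\textbf{Step 1: a one-dimensional weighted Hardy inequality.} Fix $x_1\in(-1,1)$ and write $v(s)=u(x_1,s)$. This $v$ is smooth with compact support in $(0,1)$. Set $\beta=\alpha-1<0$, so that $s^{\alpha-2}=\frac{1}{\beta}\frac{d}{ds}s^{\beta}$. Integrating by parts, with no boundary terms because $v$ has compact support, gives
\begin{equation*}
\int_0^1 s^{\alpha-2}|v|^2\,ds=-\frac{1}{\alpha-1}\int_0^1 s^{\alpha-1}\,2\operatorname{Re}\bigl(v'\,\overline v\bigr)\,ds=\frac{2}{1-\alpha}\operatorname{Re}\int_0^1 s^{\alpha-1}v'\,\overline v\,ds.
\end{equation*}
We then split $s^{\alpha-1}=s^{\frac{\alpha-2}{2}}\,s^{\frac{\alpha}{2}}$ and apply Cauchy--Schwarz:
\begin{equation*}
I:=\int_0^1 s^{\alpha-2}|v|^2\,ds\le \frac{2}{1-\alpha}\,I^{1/2}\Bigl(\int_0^1 s^\alpha|v'|^2\,ds\Bigr)^{1/2}.
\end{equation*}
Dividing by $I^{1/2}$ (the case $I=0$ is trivial) and squaring yields
\begin{equation*}
I\le\frac{4}{(1-\alpha)^2}\int_0^1 s^\alpha|v'|^2\,ds.
\end{equation*}
Integrating this in $x_1$ over $(-1,1)$ gives \eqref{09.15.2} for smooth compactly supported $u$.

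\textbf{Step 2: density.} Now take $u\in H_0^1(\Omega;w)$ and choose $u_n\in C_0^\infty(\Omega)$ with $u_n\to u$ in $H^1(\Omega;w)$. Then:
\begin{itemize}
\item the right-hand sides of \eqref{09.15.2} for $u_n$ converge to the right-hand side for $u$, because $x_2^{\alpha/2}\partial_{x_2}u_n\to x_2^{\alpha/2}\partial_{x_2}u$ in $L^2$;
\item by the inequality applied to the differences $u_n-u_m$, the sequence $x_2^{(\alpha-2)/2}u_n$ is Cauchy in $L^2(\Omega)$;
\item passing to a subsequence, $u_n\to u$ a.e., so the $L^2$ limit of $x_2^{(\alpha-2)/2}u_n$ is $x_2^{(\alpha-2)/2}u$ (Fatou's lemma alone would also suffice).
\end{itemize}
Together these give \eqref{09.15.2} for $u$.

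\textbf{Step 3: the $L^2$ bound.} Since $0<x_2<1$ and $\alpha-2<0$, we have $x_2^{\alpha-2}\ge 1$ on $\Omega$. Hence
\begin{equation*}
\int_\Omega|u|^2\,dx\le\int_\Omega x_2^{\alpha-2}|u|^2\,dx.
\end{equation*}
Moreover, $\nabla u\cdot A\nabla\overline u=|\partial_{x_1}u|^2+x_2^\alpha|\partial_{x_2}u|^2\ge x_2^\alpha|\partial_{x_2}u|^2$. Combining both observations with \eqref{09.15.2} gives \eqref{09.15.3}.

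The only delicate point is the integration by parts in Step 1. Doing it on compactly supported smooth functions avoids any boundary term at $s=0$, where the weight $s^{\alpha-1}$ blows up. This is exactly why we prove the estimate on $C_0^\infty(\Omega)$ first and only afterwards pass to the limit by density.
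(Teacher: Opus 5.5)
Your proof is correct, and it differs from the paper's only in how \eqref{09.15.2} is obtained. The paper does not prove the weighted Hardy inequality. It cites it for real-valued functions and extends it to complex $u=u_1+\mathrm{i}u_2$ by applying it to $u_1$ and $u_2$ separately and adding, using $|\partial_{x_2}u|^2=|\partial_{x_2}u_1|^2+|\partial_{x_2}u_2|^2$.

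You instead prove it from scratch, directly for complex-valued test functions. Your argument uses the identity $\partial_s|v|^2=2\operatorname{Re}(v'\overline v)$, one integration by parts against $s^{\alpha-1}$, and Cauchy--Schwarz after splitting $s^{\alpha-1}=s^{(\alpha-2)/2}s^{\alpha/2}$. You then extend to $H_0^1(\Omega;w)$ by an explicit density argument. The final step, deducing \eqref{09.15.3} from $x_2^{\alpha-2}\ge 1$ and $\nabla u\cdot A\nabla\overline u\ge x_2^\alpha|\partial_{x_2}u|^2$, is identical in both.

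What each approach buys:
\begin{itemize}
\item The paper's version is shorter and modular: the real/imaginary splitting is a generic way to transfer a real quadratic inequality to complex functions. It leans on the literature, though, and tacitly uses that $\operatorname{Re}u,\operatorname{Im}u\in H_0^1(\Omega;w)$. That is true because real and imaginary parts of $C_0^\infty$ approximants converge.
\item Your version is self-contained and shows where the constant $4/(1-\alpha)^2$ comes from. It avoids the splitting altogether. It also makes clear that no boundary term appears at $x_2=0$, where $s^{\alpha-1}$ blows up, precisely because you work on $C_0^\infty(\Omega)$ first and pass to the limit only afterwards.
\end{itemize}
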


\begin{proof}
	We first prove the inequality for real-valued functions.  By the weighted Hardy inequality
	\cite[Lemma 2.2]{Yang7}, \cite[Lemma 2.2]{Yang6}, or
	\cite[Lemma 2.1]{Yang}, we have
	$$
	\int_\Omega x_2^{\alpha-2}u^2\,dx
	\leq
	\frac{4}{(1-\alpha)^2}
	\int_\Omega x_2^\alpha
	|\partial_{x_2}u|^2\,dx .
	$$
	
	Now, let $
	u=u_1+\im  u_2$, 
	where $u_1$ and $u_2$ are real-valued functions. Applying the above
	inequality to $u_1$ and $u_2$, respectively, we obtain
	$$
	\begin{aligned}
		\int_\Omega x_2^{\alpha-2}|u|^2\,dx
		&=
		\int_\Omega x_2^{\alpha-2}
		(u_1^2+u_2^2)\,dx\\
		&\leq
		\frac{4}{(1-\alpha)^2}
		\int_\Omega x_2^\alpha
		\left(
		|\partial_{x_2}u_1|^2
		+
		|\partial_{x_2}u_2|^2
		\right)\,dx =
		\frac{4}{(1-\alpha)^2}
		\int_\Omega x_2^\alpha
		|\partial_{x_2}u|^2\,dx .
	\end{aligned}
	$$
	Hence, \eqref{09.15.2} holds.
	
	Finally, since $0<x_2<1$ and $\alpha-2<0$, we have $
	1\leq x_2^{\alpha-2}$. 
	Therefore,
	$$
	\begin{aligned}
		\int_\Omega |u|^2\,dx
		&\leq
		\int_\Omega x_2^{\alpha-2}|u|^2\,dx \leq
		\frac{4}{(1-\alpha)^2}
		\int_\Omega x_2^\alpha
		|\partial_{x_2}u|^2\,dx \leq
		\frac{4}{(1-\alpha)^2}
		\int_\Omega \nabla u\cdot A\nabla\overline u\,dx .
	\end{aligned}
	$$
	This proves \eqref{09.15.3}.
\end{proof}

\begin{remark}\label{09.15.R1}
	Inequality \eqref{09.15.3} is a Poincar\'e-type inequality. Consequently, the norm
	\begin{equation*}
		\|u\|_{H_0^1(\Omega;w)}
		=
		\left(
		\int_\Omega
		\nabla u\cdot A\nabla\overline{u}\,dx
		\right)^{\frac12}
	\end{equation*}
	is an equivalent Hilbert norm induced by $H^1(\Omega;w)$ on $H_0^1(\Omega;w)$. In what follows, we use this norm on $H_0^1(\Omega;w)$.
\end{remark}

\begin{lemma}\label{09.15.L2}
	The embedding $
	H_0^1(\Omega;w)\hookrightarrow L^2(\Omega)$ 
	is compact.
\end{lemma}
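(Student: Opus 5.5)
We argue by a localization that separates the degenerate layer near $\Gamma_2^0$ from the interior region, where the classical Rellich--Kondrachov theorem applies. The Hardy inequality \eqref{09.15.2} of Lemma \ref{09.15.L1} shows that the part of the $L^2$ mass sitting in a thin strip near $\Gamma_2^0$ is uniformly small.

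Let $\{u_n\}$ be bounded in $H_0^1(\Omega;w)$, say $\|u_n\|_{H_0^1(\Omega;w)}\le M$ in the norm of Remark \ref{09.15.R1}. For $\delta\in(0,1)$ put $\Omega_\delta=(-1,1)\times(\delta,1)$ and $S_\delta=(-1,1)\times(0,\delta)$.

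\emph{Step 1 (small mass near the degenerate boundary).} On $S_\delta$ we have $x_2^{\alpha-2}\ge \delta^{\alpha-2}$, since $\alpha-2<0$. Together with \eqref{09.15.2} this gives
\begin{equation*}
\int_{S_\delta}|u_n|^2\,dx\le \delta^{2-\alpha}\int_{\Omega}x_2^{\alpha-2}|u_n|^2\,dx\le \frac{4\delta^{2-\alpha}}{(1-\alpha)^2}M^2 .
\end{equation*}
The right-hand side is independent of $n$ and tends to $0$ as $\delta\to0$, because $2-\alpha>0$.

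\emph{Step 2 (compactness away from the degeneracy).} On $\Omega_\delta$ we have $w=x_2^\alpha\ge\delta^\alpha$, so
\begin{equation*}
\int_{\Omega_\delta}|\nabla u_n|^2\,dx\le \delta^{-\alpha}M^2 .
\end{equation*}
Combined with \eqref{09.15.3}, this shows that $\{u_n|_{\Omega_\delta}\}$ is bounded in $H^1(\Omega_\delta)$. Since $\Omega_\delta$ is a bounded Lipschitz rectangle, Rellich--Kondrachov gives a subsequence converging in $L^2(\Omega_\delta)$.

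\emph{Step 3 (diagonal extraction).} Take $\delta_k=1/(k+1)$ and extract successive subsequences, each converging in $L^2(\Omega_{\delta_k})$, then pass to the diagonal subsequence, still denoted $u_n$. Fix $\varepsilon>0$ and choose $k$ with $\frac{16\delta_k^{2-\alpha}}{(1-\alpha)^2}M^2<\varepsilon/2$. Splitting the integral over $\Omega_{\delta_k}$ and $S_{\delta_k}$, and applying Step 1 to $u_n$ and $u_m$ separately, we get
\begin{equation*}
\|u_n-u_m\|_{L^2(\Omega)}^2\le \|u_n-u_m\|_{L^2(\Omega_{\delta_k})}^2+2\int_{S_{\delta_k}}\left(|u_n|^2+|u_m|^2\right)dx .
\end{equation*}
The first term is less than $\varepsilon/2$ for $n,m$ large. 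So the sequence is Cauchy in $L^2(\Omega)$, hence convergent, and the embedding is compact.

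No step is a serious obstacle. The only point needing care is Step 1: uniform smallness near $\Gamma_2^0$ relies on the Hardy inequality. Since $\alpha<1$ it is already available from Lemma \ref{09.15.L1}, and its validity on all of $H_0^1(\Omega;w)$ follows by density of $C_0^\infty(\Omega)$. Complex-valuedness causes no trouble because every estimate is on $|u|^2$.
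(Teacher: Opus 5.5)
Your proof is correct, and it takes a different route from the paper only in the sense that the paper gives no argument at all: it simply cites \cite[Lemma 2.4]{Yang7} and \cite[Lemma 2.3]{Yang}.

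\textbf{What your route does.} Your argument is self-contained apart from Lemma \ref{09.15.L1} and the classical Rellich--Kondrachov theorem. It has two ingredients:
\begin{itemize}
\item The Hardy inequality \eqref{09.15.2} gives the uniform tail bound $\int_{S_\delta}|u|^2\,dx\le 4(1-\alpha)^{-2}\delta^{2-\alpha}\|u\|_{H_0^1(\Omega;w)}^2$.
\item On $\Omega_\delta$ the weight is bounded below by $\delta^\alpha$, so the ordinary compact embedding $H^1(\Omega_\delta)\hookrightarrow L^2(\Omega_\delta)$ applies.
\end{itemize}

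\textbf{What each approach buys.} The citation is economical. Your argument is transparent: it shows exactly where $\alpha<1$ enters (through the Hardy constant) and gives an explicit rate for the mass near $\Gamma_2^0$. The same argument also proves, word for word, the one-dimensional claim in Remark \ref{09.15.R3}(2), which the paper likewise asserts without proof; use Lemma \ref{09.15.L4} in place of \eqref{09.15.2}.

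\textbf{A small simplification.} Write $J$ for the embedding and $J_\delta u=\chi_{\Omega_\delta}u$. Your Step 1 says precisely that $\|J-J_\delta\|\le 2(1-\alpha)^{-1}\delta^{1-\alpha/2}$ as operators from $H_0^1(\Omega;w)$ to $L^2(\Omega)$. Each $J_\delta$ is compact by Step 2, being restriction followed by Rellich followed by zero extension. Hence $J$ is compact as an operator-norm limit of compact operators, and the diagonal extraction in Step 3 is unnecessary.
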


\begin{proof}
	The compactness of the embedding follows from
	\cite[Lemma 2.4]{Yang7} or \cite[Lemma 2.3]{Yang}, which applies to the weighted Sobolev space
	$H_0^1(\Omega;w)$ with $w=x_2^\alpha$, $\alpha\in(0,1)$.
\end{proof}

\subsection{Spectrum}\label{S2.2}

Let
\begin{equation}\label{09.15.5}
	a(u,v)
	=
	\int_\Omega \nabla u\cdot A\nabla\overline{v}\,dx,
	\mbox{ for all } 
	u,v\in H_0^1(\Omega;w).
\end{equation}
Then the operator $\mathcal A$ is a positive self-adjoint operator in
$L^2(\Omega)$ with compact resolvent. Consequently, its spectrum consists
of a sequence of positive eigenvalues
\begin{equation}\label{09.16.14}
	0<\lambda_1\leq\lambda_2\leq\lambda_3\leq\cdots,
	\quad
	\lambda_k\ra +\infty \text{ as }k\to+\infty, \quad \la_1\geq \f{(1-\al)^2}{4}.
\end{equation}
We denote by $\Phi_n$, $n\in\mathbb N^*$, a normalized eigenfunction
associated with $\lambda_n$, that is,
\begin{equation*}
	\mathcal A\Phi_n=\lambda_n\Phi_n,
	\quad
	\|\Phi_n\|_{L^2(\Omega)}=1.
\end{equation*}
Since $\mathcal A$ has real coefficients, the eigenfunctions $\Phi_n$ can
be chosen to be real-valued. It is well known that $\{\Phi_n\}_{n\in\mathbb N^*}$ forms an orthonormal basis of $L^2(\Omega)$. Moreover, since
\begin{equation}\label{09.15.6}
a(\Phi_n,\Phi_m)
=
\lambda_n(\Phi_n,\Phi_m)_{L^2(\Omega)}=\la_n\de_{nm},
\end{equation}
the family $\{\Phi_n\}_{n\in\mathbb N^*}$ forms an orthogonal basis of $H_0^1(\Omega;w)$ with respect to the inner product $a(\cdot,\cdot)$.

\begin{definition}\label{09.15.D1}
	Let $\theta\in[0,+\infty)$. We define the domain of the fractional
	power $\mathcal A^\theta$ by
	\begin{equation}\label{09.15.7}
		D(\mathcal A^\theta)
		=
		\left\{
		u\in L^2(\Omega)\colon 
		\sum_{n=1}^{\infty}
		\lambda_n^{2\theta}
		\left|(u,\Phi_n)_{L^2(\Omega)}\right|^2
		<+\infty
		\right\}.
	\end{equation}
	The inner product and the associated norm on $D(\mathcal A^\theta)$
	are respectively defined by
	\begin{equation*}
		(u,v)_{D(\mathcal A^\theta)}
		=
		\sum_{n=1}^{\infty}
		\lambda_n^{2\theta}
		(u,\Phi_n)_{L^2(\Omega)}
		\overline{(v,\Phi_n)_{L^2(\Omega)}}, \mbox{ and } 
		\|u\|_{D(\mathcal A^\theta)}
		=
		\left(
		\sum_{n=1}^{\infty}
		\lambda_n^{2\theta}
		\left|(u,\Phi_n)_{L^2(\Omega)}\right|^2
		\right)^{\frac12}.
	\end{equation*}
\end{definition}

If $u\in D(\mcA^\theta)$ then $\mcA^\theta u=\sum_{n=1}^\iy \la_n^\theta (u, \Phi_n)_{L^2(\Om)}$ for all $\theta\geq 0$. And $D(\mcA^\theta)\hra D(\mcA^\eta)$ is compact for $\theta>\eta\geq 0$. Moreover, it is easily verified that $C_0^\iy(\Om)\s D(\mcA^\theta)$ for all $\theta \geq 0$. 

It follows from the spectral representation of $\mathcal A$ and the
associated closed quadratic form that
\begin{equation*}
	D(\mathcal A^0)=L^2(\Omega),\quad
	D(\mathcal A^{\frac12})=H_0^1(\Omega;w),\quad
	D(\mathcal A^1)=D(\mathcal A).
\end{equation*} 
More precisely, we have the following lemma. 

\begin{lemma}\label{09.15.L3}
	We have
	\begin{equation}\label{09.15.8}
		D(\mathcal A^{\frac12})
		=
		H_0^1(\Omega;w).
	\end{equation}
	Moreover, for every $u\in H_0^1(\Omega;w)$,
	\begin{equation}\label{09.15.9}
		\|\mathcal A^{\frac12}u\|_{L^2(\Omega)}^2
		=
		a(u,u)
		=
		\int_\Omega
		\nabla u\cdot A\nabla\overline{u}\,dx.
	\end{equation}
\end{lemma}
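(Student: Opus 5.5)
We prove \eqref{09.15.8} by showing both inclusions through the eigenfunction expansion, using that $\{\Phi_n\}$ is an orthonormal basis of $L^2(\Omega)$ and, by \eqref{09.15.6}, an orthogonal basis of $H_0^1(\Omega;w)$ for $a(\cdot,\cdot)$. Note that, by Remark \ref{09.15.R1}, $a(\cdot,\cdot)$ is an inner product on $H_0^1(\Omega;w)$ equivalent to the original one, so $\{\lambda_n^{-1/2}\Phi_n\}$ is an orthonormal basis of $(H_0^1(\Omega;w),a)$. The key identity linking the two expansions is
\begin{equation*}
a(u,\Phi_n)=\lambda_n (u,\Phi_n)_{L^2(\Omega)},\qquad u\in H_0^1(\Omega;w),
\end{equation*}
which follows from the weak formulation of $\mathcal A\Phi_n=\lambda_n\Phi_n$: since $\Phi_n\in D(\mathcal A)$ and $\mathcal A$ is the operator associated with the closed form $a$, we have $a(\Phi_n,v)=\lambda_n(\Phi_n,v)_{L^2(\Omega)}$ for all $v\in H_0^1(\Omega;w)$. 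Taking $v=u$ and complex conjugation gives the identity, the eigenvalues and $\Phi_n$ being real.

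\emph{Inclusion $H_0^1(\Omega;w)\subseteq D(\mathcal A^{1/2})$.} For $u\in H_0^1(\Omega;w)$, Parseval in $(H_0^1(\Omega;w),a)$ with the orthonormal basis $\{\lambda_n^{-1/2}\Phi_n\}$ gives
\begin{equation*}
a(u,u)=\sum_{n}\lambda_n^{-1}|a(u,\Phi_n)|^2=\sum_n\lambda_n|(u,\Phi_n)_{L^2(\Omega)}|^2 .
\end{equation*}
Hence $u\in D(\mathcal A^{1/2})$ and \eqref{09.15.9} holds, since $\|\mathcal A^{1/2}u\|^2_{L^2}=\sum_n\lambda_n|(u,\Phi_n)|^2$.

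\emph{Inclusion $D(\mathcal A^{1/2})\subseteq H_0^1(\Omega;w)$.} Let $u\in D(\mathcal A^{1/2})$, set $c_n=(u,\Phi_n)_{L^2(\Omega)}$ and $u_N=\sum_{n\le N}c_n\Phi_n\in H_0^1(\Omega;w)$. By $a$-orthogonality,
\begin{equation*}
a(u_N-u_M,u_N-u_M)=\sum_{M<n\le N}\lambda_n|c_n|^2\to0 ,
\end{equation*}
so $(u_N)$ is Cauchy in $H_0^1(\Omega;w)$ and converges there to some $\tilde u$. Since the embedding $H_0^1(\Omega;w)\hookrightarrow L^2(\Omega)$ is continuous (Lemma \ref{09.15.L1}) and $u_N\to u$ in $L^2(\Omega)$, we get $\tilde u=u$, hence $u\in H_0^1(\Omega;w)$.

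The only delicate point is justifying the completeness of $\{\Phi_n\}$ in $(H_0^1(\Omega;w),a)$, which the paper asserts after \eqref{09.15.6}. To verify it directly, take $v\in H_0^1(\Omega;w)$ with $a(v,\Phi_n)=0$ for all $n$. By the key identity, $\lambda_n(v,\Phi_n)_{L^2}=0$, and since $\lambda_n>0$ this gives $(v,\Phi_n)_{L^2}=0$ for all $n$. Completeness of $\{\Phi_n\}$ in $L^2(\Omega)$ then yields $v=0$. This in turn rests on the identification of $\mathcal A$ as the form operator of $a$ (Friedrichs/Lax–Milgram on the Hilbert space $H_0^1(\Omega;w)$), which we take as the framework of Section \ref{S2.2}.
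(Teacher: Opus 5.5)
Your proof is correct, but it takes a different route from the paper. The paper first checks that $a(u,v)=(\mathcal A u,v)_{L^2(\Omega)}$ for $u\in D(\mathcal A)$ and $v\in H_0^1(\Omega;w)$, so that $\mathcal A$ is the operator associated with the closed symmetric form $a$. It then invokes the representation theorem for closed symmetric forms (Kato's second representation theorem), which gives $D(\mathcal A^{1/2})=D(a)=H_0^1(\Omega;w)$ and $\|\mathcal A^{1/2}u\|_{L^2(\Omega)}^2=a(u,u)$ in one step.

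You use that form--operator identity only for $u=\Phi_n$, and then argue entirely through the eigenfunction expansion:
\begin{itemize}
\item completeness of $\{\lambda_n^{-1/2}\Phi_n\}$ in $(H_0^1(\Omega;w),a)$, deduced from completeness of $\{\Phi_n\}$ in $L^2(\Omega)$;
\item Parseval, which gives one inclusion together with the norm identity;
\item a Cauchy-sequence argument combined with the Poincar\'e inequality \eqref{09.15.3}, which gives the reverse inclusion.
\end{itemize}

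What your approach buys is that it proves the lemma for the series definition of $D(\mathcal A^{1/2})$ actually used in Definition~\ref{09.15.D1}. The paper tacitly identifies that definition with the functional-calculus square root in Kato's theorem, and you avoid needing this. You also prove the completeness of $\{\Phi_n\}$ in $H_0^1(\Omega;w)$ with respect to $a$, which the paper only asserts after \eqref{09.15.6}.

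What the paper's approach buys is brevity and generality. The representation theorem needs no compact resolvent and no eigenbasis. Your argument depends on the discrete spectrum and the $L^2$-orthonormal eigenbasis assumed in Section~\ref{S2.2}, which is harmless here.
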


\begin{proof}
	For $u\in D(\mathcal A)$ and $v\in H_0^1(\Omega;w)$, integration
	by parts yields
	$$
	a(u,v)
	=
	\int_\Omega
	\nabla u\cdot A\nabla\overline v\,dx
	=
	(\mathcal Au,v)_{L^2(\Omega)}.
	$$
	Thus, the operator $\mathcal A$ defined above is the positive
	self-adjoint operator associated with the densely defined closed
	symmetric form $a$ on $L^2(\Omega)$.
	
	By the representation theorem for closed symmetric forms,
	$$
	D(\mathcal A^{\frac12})
	=
	D(a)
	=
	H_0^1(\Omega;w),
	$$
	and
	$$
	\|\mathcal A^{\frac12}u\|_{L^2(\Omega)}^2
	=
	a(u,u)
	=
	\int_\Omega
	\nabla u\cdot A\nabla\overline u\,dx.
	$$
	This proves the result.
\end{proof}

\begin{remark}\label{09.15.R2}
	Let $\Phi_n$, $n\in\mathbb N^*$, be the normalized eigenfunctions
	defined in \eqref{09.15.6}. Then
	$$
	\Phi_n\in D(\mathcal A^\theta), \text{ for all } \theta\geq0.
	$$
	In fact, $
	\|\Phi_n\|_{D(\mathcal A^\theta)}
	=
	\lambda_n^\theta$. 
\end{remark}

\begin{corollary}\label{09.15.C1}
	Let $\theta\in[1,+\infty)$. Then the operator
	\begin{equation*}
		\mathcal A:
		D(\mathcal A^\theta)
		\ra 
		D(\mathcal A^{\theta-1})
	\end{equation*}
	defines an isometry between these two spaces. More precisely,
	\begin{equation*}
		\|\mathcal Au\|_{D(\mathcal A^{\theta-1})}
		=
		\|u\|_{D(\mathcal A^\theta)}, 
		\text{ for all }u\in D(\mathcal A^\theta).
	\end{equation*}
\end{corollary}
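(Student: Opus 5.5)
The plan is to work entirely on the spectral side, using the expansion in the orthonormal basis $\{\Phi_n\}_{n\in\mathbb N^*}$ from Definition \ref{09.15.D1}. The first task is to make the action of $\mathcal A$ on $D(\mathcal A^\theta)$ with $\theta\geq1$ explicit. Since $D(\mathcal A^\theta)\hookrightarrow D(\mathcal A^1)=D(\mathcal A)$ (as $\lambda_n\geq\lambda_1>0$ by \eqref{09.16.14}), $\mathcal A u$ is defined for every $u\in D(\mathcal A^\theta)$. By self-adjointness of $\mathcal A$ and $\mathcal A\Phi_n=\lambda_n\Phi_n$, the coefficients satisfy
\begin{equation*}
(\mathcal Au,\Phi_n)_{L^2(\Omega)}=(u,\mathcal A\Phi_n)_{L^2(\Omega)}=\lambda_n(u,\Phi_n)_{L^2(\Omega)},
\end{equation*}
where we used that $\lambda_n$ is real and $\Phi_n$ is real-valued. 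If one prefers to avoid self-adjointness, the same identity follows from the integration by parts formula $a(u,v)=(\mathcal Au,v)_{L^2(\Omega)}$ in the proof of Lemma \ref{09.15.L3}, together with the symmetry of $a$.

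Next comes the norm identity, by direct substitution into the definition:
\begin{equation*}
\|\mathcal Au\|_{D(\mathcal A^{\theta-1})}^2=\sum_{n=1}^\infty\lambda_n^{2(\theta-1)}\lambda_n^2|(u,\Phi_n)_{L^2(\Omega)}|^2=\|u\|_{D(\mathcal A^\theta)}^2 .
\end{equation*}
In particular $\mathcal Au\in D(\mathcal A^{\theta-1})$, so the map is well defined, and the identity shows it is isometric and hence injective.

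The statement says ``isometry between these two spaces,'' so I would also check surjectivity. Given $f\in D(\mathcal A^{\theta-1})$, set
\begin{equation*}
u=\sum_{n=1}^\infty\lambda_n^{-1}(f,\Phi_n)_{L^2(\Omega)}\Phi_n .
\end{equation*}
The computation above gives $\|u\|_{D(\mathcal A^\theta)}=\|f\|_{D(\mathcal A^{\theta-1})}<\infty$, so $u\in D(\mathcal A^\theta)\subseteq D(\mathcal A)$. The coefficients of $\mathcal Au$ are then those of $f$, so $\mathcal Au=f$ by completeness of the basis.

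The only delicate point is the identification of $D(\mathcal A^1)$ from Definition \ref{09.15.D1} with the operator domain $D(\mathcal A)=H_0^1(\Omega;w)\cap H^2(\Omega;w)$. This identification is needed so that the abstract $\mathcal A$ coincides with $-\operatorname{Div}(A\nabla\cdot)$ on these spaces. I would take it from the form-representation argument in Lemma \ref{09.15.L3}, which is the standard spectral fact stated just before that lemma. Everything else is routine manipulation of coefficients.
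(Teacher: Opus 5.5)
Your proposal is correct and follows essentially the same route as the paper, which also writes $\mathcal Au=\sum_n\lambda_n(u,\Phi_n)_{L^2(\Omega)}\Phi_n$ and compares the two weighted coefficient sums term by term. Your justification of the coefficient identity via self-adjointness and your surjectivity argument are correct additions that the paper's proof leaves out, since it only verifies the norm identity.
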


\begin{proof}
	Let $u\in D(\mathcal A^\theta)$ and set $
	u_n=(u,\Phi_n)_{L^2(\Omega)}$. 
	By the spectral definition of the fractional powers of $\mathcal A$, $
	\mathcal Au
	=
	\sum_{n=1}^{\infty}
	\lambda_n u_n\Phi_n$. 
	Since $u\in D(\mathcal A^\theta)$, we have $
	\sum_{n=1}^{\infty}
	\lambda_n^{2\theta}|u_n|^2<+\infty$. 
	Therefore,
	$$
	\begin{aligned}
		\|\mathcal Au\|_{D(\mathcal A^{\theta-1})}^2
		&=
		\sum_{n=1}^{\infty}
		\lambda_n^{2(\theta-1)}
		|\lambda_n u_n|^2 =
		\sum_{n=1}^{\infty}
		\lambda_n^{2\theta}|u_n|^2 =
		\|u\|_{D(\mathcal A^\theta)}^2.
	\end{aligned}
	$$
	Hence
	$$
	\|\mathcal Au\|_{D(\mathcal A^{\theta-1})}
	=
	\|u\|_{D(\mathcal A^\theta)},
	$$
	which proves the result.
\end{proof}

\subsection{Method of separation of variables}\label{S2.3}

Denote
\begin{equation}\label{09.15.10}
	\Lambda_1=(-1,1),
	\quad
	\Lambda_2=(0,1).
\end{equation}
We introduce the one-dimensional operators
\begin{equation}\label{09.15.11}
	\mathcal A^{(1)}u=-\partial_{x_1x_1}u,
	\quad
	\mathcal A^{(2)}u
	=
	-\partial_{x_2}
	\left(
	x_2^\alpha\partial_{x_2}u
	\right), 
\end{equation}
where $\mathcal A^{(1)}$ acts only on the $x_1$-variable and $\mathcal A^{(2)}$ acts only on the $x_2$-variable. 
Then the operator $\mathcal A$ can be decomposed as
\begin{equation}\label{09.15.12}
	\mathcal A
	=
	\mathcal A^{(1)}+\mathcal A^{(2)}.
\end{equation}
This decomposition reflects the structure of the operator with uniformly elliptic behavior in the tangential direction and degeneracy only in the normal direction. 

Similar to the constructions in Sections~\ref{S2.1}-\ref{S2.2}, for the operator $\mcA^{(2)}$, we
define the weighted Sobolev spaces
$$
H^1(\Lambda_2;w),
\quad
H_0^1(\Lambda_2;w),
\quad
H^2(\Lambda_2;w),
$$
together with their corresponding inner products and norms. We emphasize that the space $H^2(\Lambda_2;w)$ introduced above is the operator-adapted weighted space associated with the degenerate operator $\mathcal A^{(2)}$. In general, it does not coincide with the classical Sobolev space $H^2(\Lambda_2)$.
 Here,
the weighted spaces on $\Lambda_2$ are defined with respect to the
weight $w(x_2)=x_2^\alpha$. 

On the other hand, since $\mathcal A^{(1)}$
is the classical one-dimensional Laplace operator, the spaces
$$
H^1(\Lambda_1),
\quad
H_0^1(\Lambda_1),
\quad
H^2(\Lambda_1)
$$
coincide with the corresponding classical Sobolev spaces.  For example, we define the weighted Sobolev space
\begin{equation*}
	H^1(\Lambda_2;w)
	=
	\left\{
	u\in L^2(\Lambda_2)\colon 
	w(x_2)|\pt_{x_2}u|^2\in L^1(\La_2)
	\right\}.
\end{equation*}
Its inner product and norm are respectively defined by
\begin{equation*}
	(u,v)_{H^1(\Lambda_2;w)}
	=
	\int_{\Lambda_2}
	\left(
	u\overline v
	+
	x_2^\alpha
	(\partial_{x_2}u)
	\overline{(\partial_{x_2}v)}
	\right)\,dx_2,\mbox{ and } 
	\|u\|_{H^1(\Lambda_2;w)}
	=(u,u)_{H^1(\La_2;w)}^\f{1}{2}.
\end{equation*}

Define 
\begin{equation*}
	D(\mcA^{(1)})=H_0^1(\La_1)\cap H^2(\La_1),\quad D(\mcA^{(2)})=H_0^1(\La_2;w)\cap H^2(\La_2;w). 
\end{equation*}
Note that all the spaces defined above are spaces over the complex field $\mathbb{C}$.

The one-dimensional weighted Hardy inequality below will be used repeatedly.

\begin{lemma}[{\cite[(2.1), p.~165]{Alabau}}]\label{09.15.L4}
	For all $u\in H_0^1(\La_2;w)$, we have 
	\begin{equation*}
		\int_{\La_2} x_2^{\al-2} |u|^2\df x_2\leq \f{4}{(1-\al)^2}\int_{\La_2} x_2^\al |\pt_{x_2}u|^2\df x_2. 
	\end{equation*}
\end{lemma}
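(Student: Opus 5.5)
We prove the inequality first for smooth compactly supported functions and then pass to all of $H_0^1(\La_2;w)$ by density; as in Lemma \ref{09.15.L1}, we reduce to real-valued functions. If $u=u_1+\im u_2$ with $u_1,u_2$ real, both sides of the claimed inequality split additively into the $u_1$ and $u_2$ parts. It therefore suffices to treat real-valued $u\in C_0^\iy(\La_2)$.

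For such $u$ the key identity comes from integrating by parts against the derivative of $x_2^{\al-1}$. Since $u$ vanishes near $0$ and near $1$, there are no boundary terms, and
\begin{equation*}
	(1-\al)\int_{\La_2} x_2^{\al-2}u^2\df x_2
	=-\int_{\La_2}\big(x_2^{\al-1}\big)' u^2\df x_2
	=2\int_{\La_2} x_2^{\al-1}u\,\pt_{x_2}u\df x_2 .
\end{equation*}
We write $x_2^{\al-1}=x_2^{\f{\al-2}{2}}\,x_2^{\f{\al}{2}}$ and apply Cauchy--Schwarz:
\begin{equation*}
	(1-\al)\int_{\La_2} x_2^{\al-2}u^2\df x_2\leq 2\Big(\int_{\La_2}x_2^{\al-2}u^2\df x_2\Big)^{\f12}\Big(\int_{\La_2}x_2^{\al}|\pt_{x_2}u|^2\df x_2\Big)^{\f12}.
\end{equation*}
The left integral is finite because $u$ has compact support in $(0,1)$. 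If it is zero there is nothing to prove; otherwise we divide by its square root and square both sides. This gives the inequality with the constant $\f{4}{(1-\al)^2}$, which is where the hypothesis $\al<1$ enters.

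The main obstacle is the passage to general $u\in H_0^1(\La_2;w)$, since the left-hand side is not a priori controlled by the norm. Take $u_k\in C_0^\iy(\La_2)$ with $u_k\to u$ in $H^1(\La_2;w)$. Applying the inequality to $u_k-u_j$ shows that $x_2^{\f{\al-2}{2}}u_k$ is Cauchy in $L^2(\La_2)$. Passing to a subsequence with $u_k\to u$ a.e., its limit must equal $x_2^{\f{\al-2}{2}}u$. Alternatively, Fatou's lemma applied along the a.e. convergent subsequence gives the inequality directly. This extends the bound to all of $H_0^1(\La_2;w)$ and completes the argument.
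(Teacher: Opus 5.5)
Your proof is correct, but it is not what the paper does: the paper gives no argument for this lemma and simply quotes the one-dimensional Hardy inequality from \cite{Alabau}. It treats Lemma \ref{09.15.L1} the same way, quoting the two-dimensional version and adding only the splitting into real and imaginary parts.

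You instead give the standard self-contained proof. For real $u\in C_0^\iy(\La_2)$ you use the identity $(1-\al)\int_{\La_2}x_2^{\al-2}u^2\,\df x_2=2\int_{\La_2}x_2^{\al-1}u\,\pt_{x_2}u\,\df x_2$, then Cauchy--Schwarz with the splitting $x_2^{\al-1}=x_2^{(\al-2)/2}x_2^{\al/2}$. You extend to all of $H_0^1(\La_2;w)$ by density and Fatou's lemma; your Cauchy-sequence variant also works.

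The citation is shorter, but the reference states the inequality in its own functional setting. Strictly, one still has to check that it covers the space used here, namely complex-valued functions in the closure of $C_0^\iy(\La_2)$ in $H^1(\La_2;w)$. Your argument works directly with that definition and with complex values, so no identification of spaces is needed. It also shows exactly where $\al<1$ enters and where the constant $\f{4}{(1-\al)^2}$ comes from.
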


\begin{remark}\label{09.15.R3}
	(1) By Lemma~\ref{09.15.L4}, for every
		$u\in H_0^1(\Lambda_2;w)$, we have
		\begin{equation*}
			\int_{\Lambda_2}|u|^2\,dx_2
			\leq
			\frac{4}{(1-\alpha)^2}
			\int_{\Lambda_2}
			x_2^\alpha|\partial_{x_2}u|^2\,dx_2.
		\end{equation*}
		Hence, the norm
		\begin{equation*}
			\|u\|_{H_0^1(\Lambda_2;w)}
			=
			\left(
			\int_{\Lambda_2}
			x_2^\alpha|\partial_{x_2}u|^2\,dx_2
			\right)^{\frac12}
		\end{equation*}
		is equivalent to the standard $H^1(\Lambda_2;w)$-norm on
		$H_0^1(\Lambda_2;w)$. In what follows, we use this norm on
		$H_0^1(\Lambda_2;w)$.
		
		(2)  The embedding
		\begin{equation*}
			H_0^1(\Lambda_2;w)\hookrightarrow L^2(\Lambda_2)
		\end{equation*}
		is compact. 
\end{remark}

Since the embeddings
\begin{equation*}
	H_0^1(\Lambda_1)\hookrightarrow L^2(\Lambda_1), \text{ and } 
	H_0^1(\Lambda_2;w)\hookrightarrow L^2(\Lambda_2)
\end{equation*}
are compact, the operators $\mathcal A^{(1)}$ and $\mathcal A^{(2)}$
defined in \eqref{09.15.11} are positive self-adjoint operators with
compact resolvents. They are associated with the sesquilinear forms
\begin{equation*}
	a^{(1)}(u,v)
	=
	\int_{\Lambda_1}
	(\partial_{x_1}u)
	\overline{(\partial_{x_1}v)}
	\,dx_1,
\end{equation*}
and
\begin{equation*}
	a^{(2)}(u,v)
	=
	\int_{\Lambda_2}
	x_2^\alpha
	(\partial_{x_2}u)
	\overline{(\partial_{x_2}v)}
	\,dx_2,
\end{equation*}
respectively. Consequently, for $i=1,2$, the spectrum of
$\mathcal A^{(i)}$ consists of a sequence of positive eigenvalues
\begin{equation}\label{09.18.8}
	0<\lambda_1^{(i)}
	\leq
	\lambda_2^{(i)}
	\leq
	\lambda_3^{(i)}
	\leq\cdots,
	\quad
	\lambda_k^{(i)}\ra+\infty
	\text{ as }k\to+\infty, \quad \la_1^{(2)}\geq \f{(1-\al)^2}{4}.
\end{equation}
We denote by $\Phi_n^{(i)}$, $n\in\mathbb N^*$, a normalized
eigenfunction of $\mathcal A^{(i)}$ associated with
$\lambda_n^{(i)}$, that is,
\begin{equation}\label{09.17.10}
	\mathcal A^{(i)}\Phi_n^{(i)}
	=
	\lambda_n^{(i)}\Phi_n^{(i)},
	\quad
	\|\Phi_n^{(i)}\|_{L^2(\Lambda_i)}=1.
\end{equation}
Since $\mathcal A^{(i)}$ has real coefficients, the eigenfunctions
$\Phi_n^{(i)}$ can be chosen to be real-valued. Moreover,
$\{\Phi_n^{(i)}\}_{n\in\mathbb N^*}$ forms an orthonormal basis of
$L^2(\Lambda_i)$.

In addition, for $u=\Phi_n^{(i)}$ and $v=\Phi_m^{(i)}$, we have
\begin{equation}\label{09.15.13}
	a^{(i)}(\Phi_n^{(i)},\Phi_m^{(i)})
	=
	\lambda_n^{(i)}
	(\Phi_n^{(i)},\Phi_m^{(i)})_{L^2(\Lambda_i)}=\la_n^{(i)}\de_{nm}.
\end{equation}
Therefore, $\{\Phi_n^{(1)}\}_{n\in\mathbb N^*}$ and
$\{\Phi_n^{(2)}\}_{n\in\mathbb N^*}$ form orthogonal bases of
$H_0^1(\Lambda_1)$ and $H_0^1(\Lambda_2;w)$, respectively, with
respect to the inner products $a^{(1)}(\cdot,\cdot)$ and
$a^{(2)}(\cdot,\cdot)$.

From \cite[Lemma 2, p.~2047]{Gueye}, \cite[(74), p.~198]{Cannarsa},
or \cite[(1.3), p.~2509]{Buffe}, there exists a constant $C>0$,
independent of $k$ and $\alpha$, such that
\begin{equation}\label{08.05.7}
	\sqrt{\lambda_{k+1}^{(2)}}
	-
	\sqrt{\lambda_k^{(2)}}
	\geq
	C(2-\alpha),
	\mbox{ for all }
	k\in\mathbb N^* \mbox{ and all }
	\alpha\in[0,1).
\end{equation}

The following lemma follows from the method of separation of variables.

\begin{lemma}[{\cite[Lemma 3.3]{Yang}}]\label{08.04.L6}
	We have
	\begin{equation*}
		H_0^1(\Omega;w)
		=
		H_0^1(\Lambda_1)\widehat\otimes H_0^1(\Lambda_2;w),
	\end{equation*}
	where $\widehat\otimes$ denotes the Hilbert tensor product in the sense of \cite{Yang}.
\end{lemma}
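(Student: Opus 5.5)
We have to prove Lemma \ref{08.04.L6}: $H_0^1(\Omega;w)=H_0^1(\Lambda_1)\widehat\otimes H_0^1(\Lambda_2;w)$. The plan is to build an explicit product orthogonal basis of $H_0^1(\Omega;w)$ and then compare norms. The basis will be $\{\Phi_n^{(1)}(x_1)\Phi_m^{(2)}(x_2)\}_{n,m\in\mathbb N^*}$.

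\textbf{Step 1: $L^2$ level.} Since $\{\Phi_n^{(1)}\}$ and $\{\Phi_m^{(2)}\}$ are orthonormal bases of $L^2(\Lambda_1)$ and $L^2(\Lambda_2)$, Fubini's theorem and the standard identification $L^2(\Omega)=L^2(\Lambda_1)\widehat\otimes L^2(\Lambda_2)$ show that the products $\Psi_{nm}=\Phi_n^{(1)}\otimes\Phi_m^{(2)}$ form an orthonormal basis of $L^2(\Omega)$. Each $\Psi_{nm}$ lies in $D(\mathcal A)$ and satisfies
\[
\mathcal A\Psi_{nm}=(\lambda_n^{(1)}+\lambda_m^{(2)})\Psi_{nm}.
\]
This uses the decomposition \eqref{09.15.12}. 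It also requires checking that $\Psi_{nm}\in H_0^1(\Omega;w)$: approximate $\Phi_n^{(1)}$ in $H_0^1(\Lambda_1)$ and $\Phi_m^{(2)}$ in $H_0^1(\Lambda_2;w)$ by $C_0^\infty$ functions, and note that products of $C_0^\infty$ functions of one variable are in $C_0^\infty(\Omega)$. Because a complete orthonormal family of eigenfunctions of a self-adjoint operator with compact resolvent exhausts its spectrum, $\{\Psi_{nm}\}$ is, after reordering, the eigenbasis $\{\Phi_k\}$. Hence $\{\lambda_k\}=\{\lambda_n^{(1)}+\lambda_m^{(2)}\}$.

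\textbf{Step 2: energy level.} By Lemma \ref{09.15.L3}, for $u=\sum u_{nm}\Psi_{nm}$ we have
\[
\|u\|_{H_0^1(\Omega;w)}^2=\sum_{n,m}(\lambda_n^{(1)}+\lambda_m^{(2)})|u_{nm}|^2.
\]
On the other side, the tensor product norm of $H_0^1(\Lambda_1)\widehat\otimes H_0^1(\Lambda_2;w)$, computed on the orthogonal bases from \eqref{09.15.13}, is
\[
\sum_{n,m}\lambda_n^{(1)}\lambda_m^{(2)}|u_{nm}|^2.
\]
The precise meaning of $\widehat\otimes$ in \cite{Yang} matters here. If it is the Hilbert tensor product, then these norms are not equivalent, since $\lambda_n^{(1)}+\lambda_m^{(2)}$ is not comparable to $\lambda_n^{(1)}\lambda_m^{(2)}$. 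In that case I would instead realize the space as the intersection
\[
\bigl(H_0^1(\Lambda_1)\widehat\otimes L^2(\Lambda_2)\bigr)\cap\bigl(L^2(\Lambda_1)\widehat\otimes H_0^1(\Lambda_2;w)\bigr),
\]
whose norm is exactly $\sum(\lambda_n^{(1)}+\lambda_m^{(2)})|u_{nm}|^2$. I would adopt the convention of \cite{Yang}, which, given the identity claimed, must be this sum-type norm (the completion of the algebraic tensor product under $a^{(1)}\otimes(\cdot,\cdot)+(\cdot,\cdot)\otimes a^{(2)}$). With that convention, both spaces are the closure of $\operatorname{span}\{\Psi_{nm}\}$ under the same norm, so they coincide isometrically.

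\textbf{Step 3: density.} The closure of $\operatorname{span}\{\Psi_{nm}\}$ in $H_0^1(\Omega;w)$ must be all of $H_0^1(\Omega;w)$. This follows because $\{\Phi_k\}$ is an orthogonal basis of $H_0^1(\Omega;w)$ for $a(\cdot,\cdot)$, by \eqref{09.15.6}.

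\textbf{Main obstacle.} The hard part is Step 1's completeness identification, namely showing that no eigenfunction of $\mathcal A$ escapes the product family. The argument is that any $\Phi_k$ orthogonal to all $\Psi_{nm}$ in $L^2$ vanishes, since the $\Psi_{nm}$ form a basis of $L^2(\Omega)$. Together with this, one must fix the norm convention so that the tensor-product identity is an honest equality.
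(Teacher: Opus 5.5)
Your argument is correct. It cannot follow the paper's route, because the paper gives no argument: it only cites \cite[Lemma 3.3]{Yang}. It then uses the lemma in Remark~\ref{08.04.R3} solely to assert that $\{\Phi_n^{(1)}\Phi_k^{(2)}\}$ is an orthonormal basis of $L^2(\Omega)$ consisting of eigenfunctions of $\mathcal A$ with eigenvalues $\lambda_n^{(1)}+\lambda_k^{(2)}$.

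Your Step~1 proves exactly that, directly from $L^2(\Omega)=L^2(\Lambda_1)\widehat\otimes L^2(\Lambda_2)$ and the one-dimensional eigen-equations. No tensor identity at the $H_0^1$ level is needed. The same argument transfers verbatim to $\Omega_\delta$ with $\Phi_{\delta,k}^{(2)}$, giving Lemma~\ref{08.04.L9} and Remark~\ref{08.04.R4}. So your route is self-contained and makes the intended norm explicit, whereas the citation buys brevity but hides which norm is meant.

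Your Step~2 diagnosis is right and should be stated as a fact, not a hedge. With the standard Hilbert tensor product, the right-hand side carries the norm $\sum_{n,m}\lambda_n^{(1)}\lambda_m^{(2)}|u_{nm}|^2$; equivalently, it additionally requires $x_2^{\alpha/2}\partial_{x_1x_2}u\in L^2(\Omega)$. With that meaning the identity is false. Since $\lambda_n^{(i)}\sim n^2$ (Remark~\ref{08.05.R1}), the function $u=\sum_n n^{-2}\Phi_n^{(1)}\Phi_n^{(2)}$ satisfies $\sum_n(\lambda_n^{(1)}+\lambda_n^{(2)})n^{-4}<\infty$ but $\sum_n\lambda_n^{(1)}\lambda_n^{(2)}n^{-4}=\infty$. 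Because $\lambda_n^{(1)}\lambda_m^{(2)}\ge c(\lambda_n^{(1)}+\lambda_m^{(2)})$, the Hilbert tensor product is only a proper dense subspace of $H_0^1(\Omega;w)$.

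What you actually prove, and what is true, is $H_0^1(\Omega;w)=\bigl(H_0^1(\Lambda_1)\widehat\otimes L^2(\Lambda_2)\bigr)\cap\bigl(L^2(\Lambda_1)\widehat\otimes H_0^1(\Lambda_2;w)\bigr)$ with the sum norm. Your inference that this ``must be'' the convention of \cite{Yang} cannot be checked from this paper, so state the result in this intersection form rather than attributing the convention.

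One minor point of rigor: \eqref{09.15.12} is an identity of differential expressions, not of operators with domains. So verify $\mathcal A\Psi_{nm}=(\lambda_n^{(1)}+\lambda_m^{(2)})\Psi_{nm}$ in the form sense:
test against $v\in C_0^\infty(\Omega)$, apply Fubini and the weak one-dimensional eigen-equations on the slices $v(\cdot,x_2)\in C_0^\infty(\Lambda_1)$ and $v(x_1,\cdot)\in C_0^\infty(\Lambda_2)$, then pass to all $v\in H_0^1(\Omega;w)$ by density. This is routine and does not affect your conclusion.
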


\begin{remark}\label{08.04.R3}
	By Lemma~\ref{08.04.L6}, the family $
	\{
	\Phi_n^{(1)}(x_1)\Phi_k^{(2)}(x_2)
	\}_{n,k\in\mathbb N^*}$ 
	forms an orthonormal basis of $L^2(\Omega)$. Moreover,
	\begin{equation*}
	\mathcal A
	\left(
	\Phi_n^{(1)}\Phi_k^{(2)}
	\right)
	=
	\left(
	\lambda_n^{(1)}+\lambda_k^{(2)}
	\right)
	\Phi_n^{(1)}\Phi_k^{(2)}.
	\end{equation*}
	Consequently, the spectrum of $\mathcal A$ consists of the
	eigenvalues
	\begin{equation*}
	\lambda_n^{(1)}+\lambda_k^{(2)},
	\mbox{ for all } 
	n,k\in\mathbb N^*,
	\end{equation*}
	counted with multiplicity. In what follows, we denote
	\begin{equation*}
		\lambda_{nk}
		=
		\lambda_n^{(1)}+\lambda_k^{(2)},
		\mbox{ for all } 
		n,k\in\mathbb N^*.
	\end{equation*}
	Thus, the eigenfunctions of $\mathcal A$ can be chosen in the form
	\begin{equation*}
	\Phi_{nk}(x_1,x_2)
	=
	\Phi_n^{(1)}(x_1)\Phi_k^{(2)}(x_2).
	\end{equation*}
\end{remark}

\begin{remark}\label{08.05.R1}
	The eigenpairs of $\mathcal A^{(1)}$ are explicitly given by
	\begin{equation*}
		\lambda_n^{(1)}
		=
		\frac{n^2\pi^2}{4},
		\quad
		\Phi_n^{(1)}(x_1)
		=
		\sin\left(
		\frac{n\pi(x_1+1)}{2}
		\right),
		\mbox{ for all } 
		n\in\mathbb N^*.
	\end{equation*}
	The eigenpairs of $\mathcal A^{(2)}$ are explicitly given by
	\begin{equation*}
		\lambda_n^{(2)}
		=
		(\kappa j_{\nu,n})^2,\quad 
		\Phi_n^{(2)}(x_2)
		=
		\frac{(2\kappa)^{\frac12}}
		{|J_\nu'(j_{\nu,n})|}
		x_2^{\frac12(1-\alpha)}
		J_\nu(j_{\nu,n}x_2^\kappa), \mbox{ for all } n\in\N^*, 
	\end{equation*}
	where
	\begin{equation*}
		\nu=\frac{1-\alpha}{2-\alpha},
		\quad
		\kappa=\frac{2-\alpha}{2}.
	\end{equation*}
	See \cite[(4.35) and (4.38), p.~2046-2047]{Gueye}.
	
	In particular,
	$$
	\lambda_n^{(1)}\sim n^2,  \text{ and } 
	\lambda_n^{(2)}\sim n^2 \text{ as }n\to+\infty,
	$$
	where the second asymptotic relation follows from the asymptotic
	behavior of the positive zeros $j_{\nu,n}$ of the Bessel function
	$J_\nu$.
\end{remark}

\subsection{Weak solution}\label{S2.4}

\begin{definition}\label{09.16.D1}
	Let $\vp^0\in H_0^1(\Om;w)$ and $f\in L^2(0,T; H_0^1(\Om;w))$. We call $\vp$ is a weak solution of the following equation 
	\begin{equation} \label{09.16.1}
		\begin{cases}
			\im\partial_{t}\vp+\mcA \vp=f, & \text{in } Q,\\
			\vp=0, & \text{on } \Sigma,\\
			\vp(0)=\vp^0,  & \text{in } \Omega
		\end{cases}
	\end{equation}
	with respect to $(\vp^0,f)$, 
	if 
	\begin{equation}\label{09.16.3}
		\vp\in  L^2(0,T; H_0^1(\Om;w)), \mbox{ with } \pt_t\vp\in L^2(0,T; H^{-1}(\Om;w))
	\end{equation}
	satisfies
	
	(i) for all $v\in H_0^1(\Om;w)$ and a.e.~$t\in [0,T]$, 
	\begin{equation}\label{09.16.2}
		\im \lg \pt_t\vp, v\rg_{H^{-1}(\Om;w), H_0^1(\Om;w)}+a(\vp, v)=(f,v)_{L^2(\Om)}, \mbox{ for all } v\in H_0^1(\Om;w), 
	\end{equation}
	
	(ii) $
		\vp(0)=\vp^0$. 
\end{definition}

\begin{theorem}\label{09.16.T1}
	Let $\vp^0\in H_0^1(\Om;w)$ and $f\in L^2(0,T; H_0^1(\Om;w))$. Then there exists a unique weak solution
	\begin{equation*}
		\vp\in C([0,T]; L^2(\Om))\cap L^\iy(0,T; H_0^1(\Om;w)), \mbox{ with } \pt_t\vp\in L^2(0,T; H^{-1}(\Om;w))
	\end{equation*} 
	to the equation \eqref{09.16.1} with respect to $(\vp^0,f)$ satisfying 
	\begin{equation}\label{09.16.4}
		\begin{split}
			&\esssup_{t\in [0,T]}\|\vp(t)\|_{H_0^1(\Om;w)}+\|\pt_t\vp\|_{L^2(0,T; H^{-1}(\Om;w))} \leq C\left(\|\vp^0\|_{H_0^1(\Om;w)}+\|f\|_{L^2(0,T; H_0^1(\Om;w))}\right), 
		\end{split}
	\end{equation}
	where the positive constant $C$ depends only on $\al$ and $T$. 
\end{theorem}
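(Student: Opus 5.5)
We prove Theorem \ref{09.16.T1} by a spectral Galerkin argument based on the eigenbasis $\{\Phi_n\}$ of $\mathcal A$ from Section \ref{S2.2}. Write $\varphi^0=\sum_n a_n\Phi_n$ and $f(t)=\sum_n f_n(t)\Phi_n$, where $a_n=(\varphi^0,\Phi_n)_{L^2(\Omega)}$ and $f_n(t)=(f(t),\Phi_n)_{L^2(\Omega)}$. For each $N$ we look for $\varphi_N(t)=\sum_{n\le N}c_n(t)\Phi_n$. Testing \eqref{09.16.2} with $v=\Phi_m$ and using \eqref{09.15.6} decouples the system into the scalar ODEs
\begin{equation*}
\mathrm i c_m'(t)+\lambda_m c_m(t)=f_m(t),\qquad c_m(0)=a_m .
\end{equation*}
These can be solved explicitly:
\begin{equation*}
c_m(t)=e^{\mathrm i\lambda_m t}a_m-\mathrm i\int_0^t e^{\mathrm i\lambda_m(t-s)}f_m(s)\,ds .
\end{equation*}
(Here I follow the sign convention of \eqref{09.16.1}. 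The opposite sign only conjugates the phase and changes nothing below.)

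\textbf{Energy estimate.} The key observation is that $|e^{\mathrm i\lambda_m t}|=1$. By Cauchy--Schwarz,
\begin{equation*}
\lambda_m|c_m(t)|^2\le 2\lambda_m|a_m|^2+2T\int_0^T\lambda_m|f_m|^2\,ds .
\end{equation*}
Summing over $m$ and using Lemma \ref{09.15.L3}, i.e. $\|u\|_{H_0^1(\Omega;w)}^2=\sum\lambda_m|u_m|^2$, gives
\begin{equation*}
\sup_t\|\varphi_N(t)\|_{H_0^1(\Omega;w)}^2\le 2\|\varphi^0\|_{H_0^1(\Omega;w)}^2+2T\|f\|_{L^2(0,T;H_0^1(\Omega;w))}^2 ,
\end{equation*}
uniformly in $N$.

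For the time derivative, recall that $H^{-1}(\Omega;w)$ carries the norm $\sum\lambda_m^{-1}|g_m|^2$, i.e. it is $D(\mathcal A^{-1/2})$. The ODE gives $|c_m'|\le\lambda_m|c_m|+|f_m|$. Hence
\begin{equation*}
\|\partial_t\varphi_N\|_{H^{-1}(\Omega;w)}^2\le 2\sum_m\lambda_m|c_m|^2+2\sum_m\lambda_m^{-1}|f_m|^2 .
\end{equation*}
The second sum is bounded by $\lambda_1^{-2}\sum\lambda_m|f_m|^2$ by \eqref{09.16.14}. Integrating in time yields \eqref{09.16.4} for $\varphi_N$ with $C$ depending only on $T$ and $\alpha$; the $\alpha$-dependence enters through $\lambda_1\ge(1-\alpha)^2/4$.

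\textbf{Passage to the limit.} Differences $\varphi_N-\varphi_M$ satisfy the same bound with $\varphi^0$ and $f$ replaced by their tails. So $\varphi_N$ is Cauchy in $C([0,T];H_0^1(\Omega;w))$, hence in $C([0,T];L^2(\Omega))$, and $\partial_t\varphi_N$ is Cauchy in $L^2(0,T;H^{-1}(\Omega;w))$. The limit $\varphi=\sum_n c_n\Phi_n$ satisfies \eqref{09.16.2}: first for $v=\Phi_m$, then for every $v\in H_0^1(\Omega;w)$ by density of $\mathrm{span}\{\Phi_m\}$ (they form an orthogonal basis for $a(\cdot,\cdot)$). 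It also satisfies $\varphi(0)=\varphi^0$, and \eqref{09.16.4} follows by lower semicontinuity. This in fact gives $C([0,T];H_0^1)$, which is stronger than the claimed $L^\infty$.

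\textbf{Uniqueness.} Let $\psi$ be the difference of two solutions, so $\psi(0)=0$ and $f=0$. For each $m$, $d_m(t)=(\psi(t),\Phi_m)_{L^2(\Omega)}$ is absolutely continuous. This uses the standard fact that $\psi\in L^2(H_0^1)$ with $\psi'\in L^2(H^{-1})$ gives $\langle\partial_t\psi,\Phi_m\rangle=\frac{d}{dt}(\psi,\Phi_m)$. Testing with $\Phi_m$ then gives $\mathrm i d_m'+\lambda_m d_m=0$ with $d_m(0)=0$, so $d_m\equiv0$ and $\psi\equiv0$ by completeness of $\{\Phi_m\}$.

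The step needing the most care is this identification of the weak time derivative with the coefficient derivatives in the Gelfand triple $H_0^1(\Omega;w)\subset L^2(\Omega)\subset H^{-1}(\Omega;w)$. It is where the pivot-space duality is used, and it also gives $\varphi\in C([0,T];L^2(\Omega))$ via the Lions--Magenes lemma.
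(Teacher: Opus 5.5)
Your proof is correct, and it takes a different route from the paper in how the Galerkin approximations are controlled and passed to the limit.

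\emph{The paper's route.} It uses the same eigenbasis $\{\Phi_n\}$ but works with differential inequalities. It multiplies the modal ODEs by $\overline{\varphi_n^m}$ and by $\overline{\partial_t\varphi_n^m}$ and closes the resulting bounds with Gronwall, so its constants grow like $e^{2T}$. It then extracts a weak-$*$ convergent subsequence in $L^\infty(0,T;H_0^1(\Omega;w))$. The initial condition has to be recovered separately, by testing against $\psi$ with $\psi(T)=0$, and $C([0,T];L^2(\Omega))$ is simply asserted. Uniqueness comes from testing with $v=\varphi$ and using $\operatorname{Re}\langle\partial_t\varphi,\varphi\rangle=0$, which requires the full Lions--Magenes identity $\frac{d}{dt}\|\varphi\|^2=2\operatorname{Re}\langle\partial_t\varphi,\varphi\rangle$.

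\emph{What your route buys.} You solve each decoupled ODE by Duhamel's formula, which gives bounds with constants $2$ and $2T$ and no exponential. Your tail estimates show the whole sequence is Cauchy in $C([0,T];H_0^1(\Omega;w))$. Hence no subsequence or weak compactness is needed, the initial datum is attained trivially, and you get strictly more time regularity than stated. Your uniqueness argument needs only the elementary fact that $t\mapsto(\psi(t),\Phi_m)_{L^2(\Omega)}$ lies in $H^1(0,T)$ with derivative $\langle\partial_t\psi,\Phi_m\rangle$. Your construction also identifies the weak solution directly with the spectral representation \eqref{09.16.11}, which the paper states in Remark \ref{09.16.R1} and uses throughout Section \ref{S3} without proof.

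\emph{What the paper's route buys.} The energy/Gronwall method does not depend on integrating the modal equations explicitly. It would therefore survive perturbations, such as potentials or time-dependent lower-order terms, for which the eigenbasis no longer decouples the system. For the present self-adjoint, time-independent problem, your approach is simpler and gives sharper conclusions.
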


The proof of Theorem \ref{09.16.T1} is postponed to Appendix \ref{SA}.

\begin{remark}\label{09.16.R1}
	Assume that $f=0$ and let
	\begin{equation*}
		\vp^0
		=
		\sum_{n,k=1}^{\infty}
		\vp_{nk}^0
		\Phi_n^{(1)}
		\Phi_k^{(2)}
		\in H_0^1(\Om;w)\ \Lra \ \sum_{n,k=1}^\iy \la_{nk}|\vp_{nk}^0|^2<+\iy, 
	\end{equation*}
	where $
	\vp_{nk}^0
	=
	(\vp^0,
	\Phi_n^{(1)}
	\Phi_k^{(2)})_{L^2(\Om)}$ 
	for all $n,k\in\mathbb N^*$.
	Then the solution of \eqref{09.16.1} admits the spectral
	expansion
	\begin{equation}\label{09.16.11}
		\vp(t)
		=
		\sum_{n,k=1}^{\infty}
		e^{\im\lambda_{nk}t}
		\vp_{nk}^0
		\Phi_n^{(1)}
		\Phi_k^{(2)} .
	\end{equation}
\end{remark}

\section{Shape design}\label{S3} 

In this section, we introduce a shape design approximation by removing a thin neighborhood of the degenerate boundary. We show that the solution of the original degenerate Schrödinger equation can be approximated by solutions of Schrödinger equations associated with uniformly elliptic operators on the designed domains $\Omega_\delta$. And we assume $\de\in (0,\f{1}{8})$.

Denote 
\begin{equation*}
	\Om_\de=(-1,1)\ts (\de,1), \quad w_\de=w|_{\Om_\de}, \quad A_\de=A|_{\Om_\de}, 
\end{equation*} 
Denote by $\mathcal A_\delta$ the Dirichlet realization of $
-\Div(A_\delta\nabla \bullet)$ 
in $\Omega_\delta$, 
and $Q_\de=\Om_\de\ts (0,T)$, $\Sigma_\de=\pt\Om_\de\ts (0,T)$, and 
\begin{equation*}
	\Ga_{\de,1}^{i}=\{i\}\ts (\de,1),\ i=-1,1, \quad \Ga_{\de,2}^j=(-1,1)\ts \{j\},\ j=\de, 1. 
\end{equation*}

Similar to Section~\ref{S2.1}, we can define the spaces
$$
H^1(\Omega_\delta;w_\delta),\quad
H_0^1(\Omega_\delta;w_\delta),\quad
H^2(\Omega_\delta;w_\delta).
$$
For example, we define
$$
H^1(\Omega_\delta;w_\delta)
=
\left\{
u\in L^2(\Omega_\delta):
\int_{\Omega_\delta}
\nabla u\cdot A_\delta\nabla\overline u\,dx<+\infty
\right\}.
$$
Since $\delta>0$, the matrix $A_\delta$ is uniformly elliptic on
$\Omega_\delta$. Moreover, $
0<\delta^\alpha\leq x_2^\alpha\leq 1$ in $\Omega_\delta$. 
Therefore, the weighted Sobolev spaces are equivalent to the standard
Sobolev spaces, that is,
$$
H^1(\Omega_\delta;w_\delta)=H^1(\Omega_\delta),\quad 
H_0^1(\Omega_\delta;w_\delta)
=
H_0^1(\Omega_\delta), \quad 
H^2(\Omega_\delta;w_\delta)
=
H^2(\Omega_\delta).
$$
The last equivalence also follows from the standard elliptic regularity
for uniformly elliptic operators.

We  also introduce 
\begin{equation*}
	\La_2^\de=(\de,1). 
\end{equation*}
And define $H_0^1(\La_2^\de;w_\de), H^1(\La_2^\de;w_\de)$ and $H^2(\La_2^\de;w_\de)$ as above. Define 
\begin{equation}\label{09.18.1}
	\mcA_\de u =\mcA_\de ^{(1)} u+\mcA_\de^{(2)} u, \mbox{ with } \mcA_\de^{(1)}=-\pt_{x_1x_1}u \mbox{ and } \mcA_\de^{(2)}=-\pt_{x_2}(x_2^\al \pt_{x_2}u). 
\end{equation}

Since the embedding  $H_0^1(\La_2^\de;w_\de)=H_0^1(\La_2^\de)\hra L^2(\La_2^\de)$ is compact, then $\mcA_\de^{(2)}$ has a purely discrete simple spectrum
\begin{equation}\label{09.18.2}
	0<\la_{\de,1}^{(2)}\leq \la_{\de,2}^{(2)}\leq \la_{\de,3}^{(2)}\leq \cdots \ra+\iy. 
\end{equation}
And we denote $\Phi_{\de,n}^{(2)}$ the normalized $n$th eigenfunction of $\mcA_\de^{(2)}$ with respect to the $n$th eigenvalue  $\la_{\de,n}^{(2)}$. i.e., 
\begin{equation}\label{09.18.3}
	a_\de^{(2)}(\Phi_{\de, n}^{(2)}, \Phi_{\de, m}^{(2)})=\la_{\de,n}^{(2)}(\Phi_{\de,n}^{(2)}, \Phi_{\de, m}^{(2)})_{L^2(\La_2^\de)}=\la_{\de,n}^{(2)}\de_{nm}, \mbox{ for all } n,m\in\N^*. 
\end{equation}

From above, we get the following lemma by Lemma \ref{08.04.L6}.  

\begin{lemma}\label{08.04.L9}
	We have 
	\begin{equation*}
		H_0^1(\Om_\de;w_\de)=H_0^1({\La_1})\wh\ots H_0^1({\La_2^\de};w_\de). 
	\end{equation*} 
\end{lemma}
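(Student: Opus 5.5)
The plan is to prove the identity $H_0^1(\Om_\de;w_\de)=H_0^1(\La_1)\wh\ots H_0^1(\La_2^\de;w_\de)$ by separation of variables, as in Lemma \ref{08.04.L6}, but now on the nondegenerate rectangle $\Om_\de=\La_1\ts\La_2^\de$. Since $\de^\al\le x_2^\al\le 1$ on $\Om_\de$, the norm $\big(\int_{\Om_\de}\nabla u\cdot A_\de\nabla\ol u\,\df x\big)^{1/2}$ is equivalent to the standard $H_0^1(\Om_\de)$ norm. Likewise, the norm of $H_0^1(\La_2^\de;w_\de)$ is equivalent to that of $H_0^1(\La_2^\de)$. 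So the claim reduces to identifying the energy space of the form
\[
a_\de(u,v)=a^{(1)}\ots I+I\ots a_\de^{(2)}
\]
with the Hilbert tensor product, normed as in \cite{Yang} by $\|u\|^2=\sum_{n,k}(\la_n^{(1)}+\la_{\de,k}^{(2)})|u_{nk}|^2$.

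\textbf{Step 1: an orthogonal basis.} Take the orthonormal basis $\{\Phi_n^{(1)}\}$ of $L^2(\La_1)$ from Remark \ref{08.05.R1} and $\{\Phi_{\de,k}^{(2)}\}$ of $L^2(\La_2^\de)$ from \eqref{09.18.3}. The products $\Phi_n^{(1)}\Phi_{\de,k}^{(2)}$ form an orthonormal basis of $L^2(\Om_\de)=L^2(\La_1)\wh\ots L^2(\La_2^\de)$ (Fubini and completeness). Each product lies in $H_0^1(\Om_\de)$, since both factors are $H^1_0$ on their intervals. By Fubini and \eqref{09.15.13}, \eqref{09.18.3},
\[
a_\de\big(\Phi_n^{(1)}\Phi_{\de,k}^{(2)},\Phi_m^{(1)}\Phi_{\de,l}^{(2)}\big)=(\la_n^{(1)}+\la_{\de,k}^{(2)})\de_{nm}\de_{kl}.
\]
Hence they are $a_\de$-orthogonal.

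\textbf{Step 2: the norm identity.} For $u\in H_0^1(\Om_\de)$ with coefficients $u_{nk}=(u,\Phi_n^{(1)}\Phi_{\de,k}^{(2)})_{L^2(\Om_\de)}$, we show
\[
a_\de(u,u)=\sum_{n,k}(\la_n^{(1)}+\la_{\de,k}^{(2)})|u_{nk}|^2 .
\]
First we prove this for $u\in C_0^\iy(\Om_\de)$. For fixed $x_2$, expand $\pt_{x_1}u(\cdot,x_2)$ in $\{\Phi_n^{(1)}\}$; integrating by parts against $\Phi_n^{(1)}$ gives $\int_{\La_1}|\pt_{x_1}u|^2\df x_1=\sum_n\la_n^{(1)}|(u(\cdot,x_2),\Phi_n^{(1)})|^2$, and then we integrate in $x_2$ and use Parseval in $\Phi_{\de,k}^{(2)}$. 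The $x_2$-term is handled symmetrically with $a_\de^{(2)}$. We then extend to all of $H_0^1(\Om_\de)$ by density, since both sides are continuous in the $H_0^1$ norm (the right side by Fatou together with the Cauchy property).

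\textbf{Step 3: conclusion.} The identity shows that $H_0^1(\Om_\de;w_\de)$ embeds isometrically into the tensor product. For the reverse inclusion, the finite sums $\sum u_{nk}\Phi_n^{(1)}\Phi_{\de,k}^{(2)}$ with finite weighted norm are Cauchy in $a_\de$. Their limit lies in the closed space $H_0^1(\Om_\de)$, which gives equality.

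The main obstacle is the density argument in Step 2 together with the surjectivity in Step 3. The cleanest route is simply to invoke the abstract result \cite[Lemma 3.3]{Yang}, which is Lemma \ref{08.04.L6}, verbatim with $\La_2$ replaced by $\La_2^\de$. Its hypotheses (compact resolvent and closed forms on each factor) are even easier to check here because the weight is bounded below.
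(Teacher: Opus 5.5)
Your proof is correct, but its main body (Steps 1--3) takes a different route from the paper's. The paper gives no separate argument. It says the lemma follows ``from above'' by Lemma \ref{08.04.L6}, i.e.\ it reuses \cite[Lemma 3.3]{Yang} with $\La_2$ replaced by $\La_2^\de$; this is exactly the shortcut you name in your last paragraph. Strictly speaking, Lemma \ref{08.04.L6} is stated only for $\La_2=(0,1)$, so this amounts to re-running the cited argument on $\La_2^\de$.

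Your direct proof instead uses only ingredients already in the paper:
\begin{itemize}
\item the eigenbases of $\mcA^{(1)}$ and $\mcA_\de^{(2)}$;
\item the identity $a_\de(u,u)=\sum_{n,k}(\la_n^{(1)}+\la_{\de,k}^{(2)})|u_{nk}|^2$, checked slice by slice on $C_0^\iy(\Om_\de)$ and extended by density;
\item completeness of $H_0^1(\Om_\de)$.
\end{itemize}

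What this buys is transparency about what $\wh\ots$ must mean. With the usual Hilbert cross-norm, the tensor product would carry a mixed-derivative norm such as $\sum_{n,k}\la_n^{(1)}\la_{\de,k}^{(2)}|u_{nk}|^2$. That gives a space strictly smaller than $H_0^1(\Om_\de)$, so the lemma holds only for the ``sum'' norm you stipulate. The paper leaves this convention hidden in the reference. Your argument also works essentially verbatim for $\de=0$, so it reproves Lemma \ref{08.04.L6} as well.

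One small wording fix in Step 2. Expand $u(\cdot,x_2)$, not $\pt_{x_1}u(\cdot,x_2)$, in $\{\Phi_n^{(1)}\}$, and use $(-\pt_{x_1x_1}u,\Phi_n^{(1)})=\la_n^{(1)}(u,\Phi_n^{(1)})$. Alternatively, expand $\pt_{x_1}u$ in the cosine system $\{\pt_{x_1}\Phi_n^{(1)}/\sqrt{\la_n^{(1)}}\}$. That system is complete only on mean-zero functions, which $\pt_{x_1}u$ is.
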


\begin{remark}\label{08.04.R4}
	From Lemma \ref{08.04.L9}, we obtain  
	$\{\Phi_n^{(1)}\Phi_{\de,k}^{(2)}\}_{n,k\in\N^*}$ is an  orthonormal basis of $L^2(\Om_\de)$, and $\la_n^{(1)}+\la_{\de,k}^{(2)}\ (n,k\in\N^*)$ form the spectrum of $\mcA_\de$. Hence, for all $m\in\N^*$, there exist $n,k\in\mathbb N^*$ such that  $\la_m^\de=\la_n^{(1)}+\la_{\de,k}^{(2)}$, and $\Phi_{\de, m}=\Phi_n^{(1)}\Phi_{\de,k}^{(2)}$ for some $n,k\in\N^*$. Throughout in this paper, we denote
	\begin{equation*}
		\la_{nk}^\de
		=
		\lambda_n^{(1)}+\lambda_{\de,k}^{(2)}.
	\end{equation*} 
\end{remark}

\begin{definition}\label{09.17.D2}
	Let  $u_\de\in H_0^1(\Om_\de;w_\de)$. For each $\ga\in [0,\de)$, define
	\begin{equation*}
		E_\de^\ga u_\de(x)=
		\begin{cases} 
			u_\de(x), &x\in \Om_\de,\\
			0, &x\in \Om_\ga-\Om_\de, 
		\end{cases}
	\end{equation*}
	then the zero extension
	$E_\de^\ga u_\de$ belongs to
	$H_0^1(\Om_\ga;w_\ga)$ and 
	$$
	\|E_\de^\ga u_\de\|_{H_0^1(\Om_\ga;w_\ga)}
	=
	\|u_\de\|_{H_0^1(\Om_\de;w_\de)} .
	$$
	
	Similarly, for $u_\de\in H_0^1(\La_2^\de;w_\de)$, we define
	$E_\de^\ga u_\de$ on $\Lambda_2^\ga$ by
	$$
	E_\de^\ga u_\de(x)=
	\begin{cases}
		u_\de(x),&x\in\Lambda_2^\de,\\
		0,&x\in\Lambda_2^\ga-\Lambda_2^\de .
	\end{cases}
	$$
	Then
	$E_\de^\ga u_\de\in H_0^1(\La_2^\ga;w_\ga)$ and $
	\|E_\de^\ga u_\de\|_{H_0^1(\La_2^\ga;w_\ga)}
	=
	\|u_\de\|_{H_0^1(\La_2^\de;w_\de)}$. 
\end{definition}

\begin{remark}\label{09.17.R1}
	Let $\ga\in[0,\de)$. From Definition \ref{09.17.D2}, by identifying
	each function with its zero extension through \begin{equation*}
		E_\de^\ga: H_0^1(\Om_\de;w_\de)\ra H_0^1(\Om_\ga;w_\ga), 
	\end{equation*}
	we can regard
	$H_0^1(\Om_\de;w_\de)$ as a closed subspace of
	$H_0^1(\Om_\ga;w_\ga)$. Similarly, we can regard
	$H_0^1(\La_2^\de;w_\de)$ as a closed subspace of
	$H_0^1(\La_2^\ga;w_\ga)$.
	
	Hence, all the weighted Sobolev spaces and related results established
	in Sections \ref{S2.1}-\ref{S2.3} remain valid after replacing
	$\Om$ and $\La_2$ by $\Om_\de$ and $\La_2^\de$, respectively. 
\end{remark}

\subsection{Weak solution}

In this subsection, we study the weak well-posedness of the uniformly   Schr\"odinger equation obtained by the shape design approximation, and we assume  $\de\in (0,\f{1}{8})$. 

Now, we consider the following system 
\begin{equation}\label{09.16.27}
	\begin{cases}
		\im \pt_t\vp_\de+\mcA_\de \vp_\de=f_\de, &\mbox{in }Q_\de, \\
		\vp_\de=0, &\mbox{on }\Sigma_\de, \\
		\vp_\de(0)=\vp_\de^0, &\mbox{in }\Om_\de, 
	\end{cases}
\end{equation}
where $
\vp_\de^0\in H_0^1(\Om_\de;w_\de)$, and $
f_\de\in L^2(0,T;H_0^1(\Om_\de;w_\de))$. 

\begin{definition}\label{09.17.D1}
	Let $\vp_\de^0\in H_0^1(\Om_\de;w_\de)$ and $f_\de\in L^2(0,T; H_0^1(\Om_\de;w_\de))$. We call $\vp_\de$ is a weak solution of the   equation \eqref{09.16.27} 
	with respect to $(\vp_\de^0,f_\de)$, 
	if 
	\begin{equation}\label{09.17.2}
		\vp_\de\in  L^2(0,T; H_0^1(\Om_\de;w_\de)), \mbox{ with } \pt_t\vp_\de\in L^2(0,T; H^{-1}(\Om_\de;w_\de))
	\end{equation}
	satisfies
	
	(i) for all $v\in H_0^1(\Om_\de;w_\de)$ and a.e.~$t\in [0,T]$, 
	\begin{equation}\label{09.17.3}
		\im \lg \pt_t\vp_\de, v\rg_{H^{-1}(\Om_\de;w_\de), H_0^1(\Om_\de;w_\de)}+a(\vp_\de, v)=(f_\de,v)_{L^2(\Om_\de)}, \mbox{ for all } v\in H_0^1(\Om_\de;w_\de), 
	\end{equation}
	
	(ii) $
	\vp_\de(0)=\vp_\de^0$. 
\end{definition}

\begin{theorem}\label{09.17.T1}
	Let $\vp_\de^0\in H_0^1(\Om_\de;w_\de)$ and $f_\de\in L^2(0,T; H_0^1(\Om_\de;w_\de))$. Then there exists a unique weak solution
	\begin{equation*}
		\vp_\de\in C([0,T]; L^2(\Om_\de))\cap L^\iy(0,T; H_0^1(\Om_\de;w_\de)), \mbox{ with } \pt_t\vp_\de\in L^2(0,T; H^{-1}(\Om_\de;w_\de))
	\end{equation*} 
	to the equation \eqref{09.16.27} with respect to $(\vp_\de^0,f_\de)$ satisfying 
	\begin{equation}\label{09.17.4}
		\begin{split}
			&\esssup_{t\in [0,T]}\|\vp_\de(t)\|_{H_0^1(\Om_\de;w_\de)}+\|\pt_t\vp_\de\|_{L^2(0,T; H^{-1}(\Om_\de;w_\de))}\\
			&\leq C\left(\|\vp_\de^0\|_{H_0^1(\Om_\de;w_\de)}+\|f_\de\|_{L^2(0,T; H_0^1(\Om_\de;w_\de))}\right), 
		\end{split}
	\end{equation}
	where the positive constant $C$ depends only on $\al$ and $T$. 
\end{theorem}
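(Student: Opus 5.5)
We follow the argument for Theorem \ref{09.16.T1}, with the spectral data of $\mcA$ replaced by those of $\mcA_\de$. By Remark \ref{08.04.R4}, the family $\{\Phi_n^{(1)}\Phi_{\de,k}^{(2)}\}_{n,k\in\N^*}$ is an orthonormal basis of $L^2(\Om_\de)$ consisting of eigenfunctions of $\mcA_\de$, with eigenvalues $\la_{nk}^\de\geq \f{(1-\al)^2}{4}$. This lower bound holds because the Hardy inequality of Lemma \ref{09.15.L4} remains valid on $H_0^1(\La_2^\de;w_\de)$, by Remark \ref{09.17.R1} and zero extension. Expanding
\begin{equation*}
\vp_\de^0=\sum\vp_{nk}^0\Phi_n^{(1)}\Phi_{\de,k}^{(2)},\qquad f_\de(t)=\sum f_{nk}(t)\Phi_n^{(1)}\Phi_{\de,k}^{(2)},
\end{equation*}
each coefficient $c_{nk}(t)$ of a solution must satisfy the scalar ODE $\im c_{nk}'+\la_{nk}^\de c_{nk}=f_{nk}$ with $c_{nk}(0)=\vp_{nk}^0$. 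Its unique solution is
\begin{equation*}
c_{nk}(t)=e^{\im\la_{nk}^\de t}\vp_{nk}^0-\im\int_0^t e^{\im\la_{nk}^\de(t-s)}f_{nk}(s)\,\df s .
\end{equation*}
We then define $\vp_\de=\sum c_{nk}\Phi_n^{(1)}\Phi_{\de,k}^{(2)}$ and take partial sums (Galerkin approximations) $\vp_\de^N$.

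\textbf{Estimates.} Since $|e^{\im\la t}|=1$, Cauchy--Schwarz gives
\begin{equation*}
\la_{nk}^\de|c_{nk}(t)|^2\leq 2\la_{nk}^\de|\vp_{nk}^0|^2+2T\int_0^T\la_{nk}^\de|f_{nk}|^2\,\df s .
\end{equation*}
Summing over $n,k$ and using Lemma \ref{09.15.L3} (valid on $\Om_\de$), the identity $\|u\|_{H_0^1(\Om_\de;w_\de)}^2=\sum\la_{nk}^\de|u_{nk}|^2$ yields
\begin{equation*}
\sup_t\|\vp_\de^N(t)\|_{H_0^1}\leq C(T)\bigl(\|\vp_\de^0\|_{H_0^1}+\|f_\de\|_{L^2(0,T;H_0^1)}\bigr),
\end{equation*}
uniformly in $N$. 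For the time derivative, note that $H^{-1}(\Om_\de;w_\de)$ has norm $\sum(\la_{nk}^\de)^{-1}|u_{nk}|^2$. Then $\pt_t c_{nk}=\im\la_{nk}^\de c_{nk}-\im f_{nk}$ gives
\begin{equation*}
\|\pt_t\vp_\de^N\|_{H^{-1}}^2\leq 2\sum\la_{nk}^\de|c_{nk}|^2+2\sum(\la_{nk}^\de)^{-1}|f_{nk}|^2 .
\end{equation*}
The last sum is bounded by $\f{16}{(1-\al)^4}\|f_\de\|_{H_0^1}^2$, using $\la_{nk}^\de\geq\f{(1-\al)^2}{4}$ twice. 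Integrating in time gives \eqref{09.17.4} for $\vp_\de^N$ with $C$ depending only on $\al$ and $T$. The uniformity in $\de$ comes from the $\de$-independent Hardy constant. This point matters later in the limit $\de\to0$, so it should be emphasized.

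\textbf{Passage to the limit and properties of the solution.}
\begin{itemize}
\item[(a)] \emph{Convergence.} The same computations applied to $\vp_\de^N-\vp_\de^M$ show that the sequence is Cauchy in $L^\iy(0,T;H_0^1)$ and in $H^1(0,T;H^{-1})$. Hence $\vp_\de$ lies in these spaces and satisfies \eqref{09.17.4}.
\item[(b)] \emph{Continuity in time.} Each $c_{nk}$ is continuous, and the series converges uniformly in $t$ in $L^2(\Om_\de)$, since its tail is controlled by the $H_0^1$ tails. This gives $\vp_\de\in C([0,T];L^2(\Om_\de))$ and $\vp_\de(0)=\vp_\de^0$.
\item[(c)] \emph{Weak formulation.} The identity \eqref{09.17.3} holds for $v=\Phi_n^{(1)}\Phi_{\de,k}^{(2)}$ by the ODE together with \eqref{09.18.3}. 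It extends to all $v\in H_0^1(\Om_\de;w_\de)$ by density and continuity of both sides.
\end{itemize}

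\textbf{Uniqueness.} Let $\vp_\de$ be the difference of two weak solutions, so it solves the problem with zero data. Testing \eqref{09.17.3} with $v=\Phi_n^{(1)}\Phi_{\de,k}^{(2)}$ shows that each coefficient satisfies the homogeneous ODE with zero initial value, hence vanishes identically. This requires that $t\mapsto(\vp_\de(t),v)$ be absolutely continuous with derivative $\lg\pt_t\vp_\de,v\rg$, which follows from $\vp_\de\in L^2(0,T;H_0^1)$ and $\pt_t\vp_\de\in L^2(0,T;H^{-1})$ via the Lions--Magenes lemma.

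\textbf{Main obstacle.} The most delicate step is justifying the initial condition in (ii) for a general weak solution. The same Lions--Magenes lemma gives $\vp_\de\in C([0,T];L^2)$, so $\vp_\de(0)$ is well defined and the coefficient argument above goes through. Everything else is routine because $\mcA_\de$ is uniformly elliptic and diagonalized explicitly.
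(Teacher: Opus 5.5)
Your proposal is correct, and it reaches the result by a more explicit route than the paper. The paper proves Theorem \ref{09.17.T1} by referring back to the proof of Theorem \ref{09.16.T1}. That proof runs a Galerkin scheme on the eigenbasis and obtains the bounds by multiplying the modal ODEs by $\overline{\vp_n^m}$ and $\overline{\pt_t\vp_n^m}$ and applying Gronwall's inequality. It then extracts a weak-$*$ convergent subsequence, recovers the initial condition through test functions $\psi$ with $\psi(T)=0$, and proves uniqueness by testing with the solution itself (conservation of the $L^2$ norm).

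You instead solve each decoupled modal ODE exactly by Duhamel's formula, read the bounds off from $|e^{\im\la t}|=1$, and show that the partial sums are Cauchy. So neither compactness nor a subsequence is needed. The initial condition and the weak formulation are then checked mode by mode, and uniqueness follows by projecting onto the eigenfunctions.

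Your version buys several things:
\begin{itemize}
\item constants polynomial rather than exponential in $T$;
\item convergence of the whole sequence, and, since your Cauchy estimate is uniform in $t$, even $\vp_\de\in C([0,T];H_0^1(\Om_\de;w_\de))$;
\item a direct proof of the continuity in $L^2(\Om_\de)$ that the paper asserts without proof in \eqref{09.16.24};
\item transparent $\de$-independence, the only $\de$-dependent input being $\la_{nk}^\de\geq\f{(1-\al)^2}{4}$;
\item identification of the weak solution with the series \eqref{09.16.28} used throughout Section \ref{S3}.
\end{itemize}
The paper's energy--compactness method, on the other hand, does not rely on the modal equations being explicitly solvable. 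It would therefore survive perturbations (for instance time-dependent lower-order terms) that couple the Galerkin system. In the present diagonal setting, however, your argument is the cleaner one.
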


The proof of Theorem \ref{09.17.T1} is postponed to Appendix \ref{SA}.

 Similar to Remark \ref{09.16.R1}, we obtain the following remark. 

\begin{remark}\label{09.16.R2}
	Let
	\begin{equation*}
		\vp_\de^0
		=
		\sum_{n,k=1}^{\infty}
		\vp_{\de, nk}^0
		\Phi_n^{(1)}
		\Phi_{\de,k}^{(2)}
		\in H_0^1(\Om_\de;w_\de),
	\end{equation*}
	where $
	\vp_{\de, nk}^0
	=
	(\vp_\de^0,
	\Phi_n^{(1)}
	\Phi_{\de,k}^{(2)})_{L^2(\Om_\de)}$ 
	for all $n,k\in\mathbb N^*$.
	Then the solution of \eqref{09.16.27} admits the spectral
	expansion
	\begin{equation}\label{09.16.28}
		\vp_\de(t)
		=
		\sum_{n,k=1}^{\infty}
		e^{\im\lambda_{nk}^\de t}
		\vp_{\de,nk}^0
		\Phi_n^{(1)}
		\Phi_{\de,k}^{(2)} .
	\end{equation}
\end{remark}

\begin{lemma}\label{09.17.L1}
	Let $\vp_\de^0\in C_0^\infty(\Om_\de)\s D(\mcA_\de)$ and let $\vp_\de$
	be the solution defined in \eqref{09.16.28}. Then
	$$
	\vp_\de\in
	L^2(0,T;H_0^1(\Om_\de;w_\de)\cap H^2(\Om_\de)), \mbox{ and } \pt_t\vp_\de \in L^2(Q_\de). 
	$$
\end{lemma}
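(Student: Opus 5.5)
The plan is to work entirely with the spectral expansion \eqref{09.16.28} on the uniformly elliptic domain $\Om_\de$. By Remark \ref{08.04.R4}, $\{\Phi_n^{(1)}\Phi_{\de,k}^{(2)}\}$ is an orthonormal eigenbasis of $\mcA_\de$ with eigenvalues $\la_{nk}^\de$. The analogue of Definition \ref{09.15.D1} on $\Om_\de$ defines $D(\mcA_\de^\theta)$ through the weights $(\la_{nk}^\de)^{2\theta}$. Since $\vp_\de^0\in C_0^\infty(\Om_\de)\s D(\mcA_\de)$, and this inclusion is the $\Om_\de$-version of the remark following Definition \ref{09.15.D1} (see Remark \ref{09.17.R1}), we have
\begin{equation*}
\sum_{n,k}(\la_{nk}^\de)^2|\vp^0_{\de,nk}|^2=\|\mcA_\de\vp_\de^0\|_{L^2(\Om_\de)}^2<+\infty .
\end{equation*}
The key observation is that the multipliers $e^{\im\la_{nk}^\de t}$ have modulus one. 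Hence for every $t\in[0,T]$,
\begin{equation*}
\|\vp_\de(t)\|_{D(\mcA_\de)}=\|\vp_\de^0\|_{D(\mcA_\de)} .
\end{equation*}
In addition, $t\mapsto\vp_\de(t)\in D(\mcA_\de)$ is continuous. This follows by dominated convergence on the coefficient series, because each term is continuous in $t$ and bounded by $(\la^\de_{nk})^2|\vp^0_{\de,nk}|^2$. Therefore $\vp_\de\in C([0,T];D(\mcA_\de))\s L^2(0,T;D(\mcA_\de))$.

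Next I identify $D(\mcA_\de)$ with $H_0^1(\Om_\de;w_\de)\cap H^2(\Om_\de)$, with equivalent norms. The inclusion into $H_0^1$ is the analogue of Lemma \ref{09.15.L3}. The $H^2$ part is standard elliptic regularity for the uniformly elliptic operator $-\Div(A_\de\nabla\,\cdot)$ on the convex rectangle $\Om_\de$. The coefficients there are smooth, since $x_2^\al$ is smooth on $[\de,1]$, and bounded below by $\de^\al$. By Kadlec/Grisvard-type $H^2$ regularity on convex domains, $\|u\|_{H^2(\Om_\de)}\le C_\de\|\mcA_\de u\|_{L^2(\Om_\de)}$ for $u\in H_0^1(\Om_\de)$ with $\mcA_\de u\in L^2$. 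This is exactly the equality $H^2(\Om_\de;w_\de)=H^2(\Om_\de)$ already asserted in the text. Combining it with the previous paragraph gives
\begin{equation*}
\vp_\de\in L^2(0,T;H_0^1(\Om_\de;w_\de)\cap H^2(\Om_\de)),
\end{equation*}
with norm bounded by $C_\de T^{1/2}\|\mcA_\de\vp_\de^0\|_{L^2}$. The constant $C_\de$ may blow up as $\de\to0$, but that is harmless here.

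For the time derivative, I differentiate the series term by term:
\begin{equation*}
\pt_t\vp_\de(t)=\sum_{n,k}\im\la_{nk}^\de e^{\im\la_{nk}^\de t}\vp^0_{\de,nk}\Phi_n^{(1)}\Phi_{\de,k}^{(2)} .
\end{equation*}
This series converges in $L^2(\Om_\de)$ uniformly in $t$, by the same square-summability as above. The partial sums converge in $C([0,T];L^2)$, and so do their derivatives, so the term-by-term differentiation is justified, and it agrees with the distributional $\pt_t\vp_\de\in L^2(0,T;H^{-1})$ from Theorem \ref{09.17.T1}. Moreover $\|\pt_t\vp_\de(t)\|_{L^2}=\|\mcA_\de\vp_\de^0\|_{L^2}$, so $\pt_t\vp_\de\in C([0,T];L^2(\Om_\de))\s L^2(Q_\de)$, and the equation $\im\pt_t\vp_\de+\mcA_\de\vp_\de=0$ holds in $L^2(Q_\de)$.

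The main obstacle is the regularity identification $D(\mcA_\de)\s H^2(\Om_\de)$, because the rectangle has corners. I would handle it first by citing convex-domain $H^2$ regularity. If a more self-contained route is wanted, I would use the tensor structure instead. In the basis $\Phi_n^{(1)}\Phi_{\de,k}^{(2)}$, the quantities $\|\pt_{x_1x_1}u\|^2$ and $\|\pt_{x_2}(x_2^\al\pt_{x_2}u)\|^2$ are controlled by $\sum(\la_n^{(1)})^2|u_{nk}|^2$ and $\sum(\la_{\de,k}^{(2)})^2|u_{nk}|^2$, both dominated by $\sum(\la^\de_{nk})^2|u_{nk}|^2$. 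The mixed term $\|\pt_{x_1x_2}u\|^2$ is bounded by $\de^{-\al}\sum\la_n^{(1)}\la_{\de,k}^{(2)}|u_{nk}|^2$, and $\pt_{x_2x_2}u$ is then recovered from $x_2^\al\pt_{x_2x_2}u=-\pt_{x_2}(x_2^\al\pt_{x_2}u)-\al x_2^{\al-1}\pt_{x_2}u$ using $x_2\ge\de$.
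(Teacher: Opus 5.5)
Your proof is correct, but your primary route differs from the paper's. The paper uses no abstract elliptic-regularity theorem. It expands $\vp_\de$ in the product basis $\Phi_n^{(1)}\Phi_{\de,k}^{(2)}$ and estimates each second derivative directly by Parseval:
\begin{itemize}
\item $\|\pt_{x_1x_1}\vp_\de\|_{L^2(Q_\de)}^2$ is bounded by $T\sum(\la_n^{(1)})^2|\vp^0_{\de,nk}|^2$;
\item $\|x_2^{\al/2}\pt_{x_1x_2}\vp_\de\|_{L^2(Q_\de)}^2$ is bounded by $T\sum\la_n^{(1)}\la^{(2)}_{\de,k}|\vp^0_{\de,nk}|^2\le\frac{T}{4}\sum(\la^\de_{nk})^2|\vp^0_{\de,nk}|^2$;
\item $x_2^{\al}\pt_{x_2x_2}\vp_\de$ is controlled through the eigenvalue equation.
\end{itemize}
The weights are then removed using $x_2\ge\de$, and $\pt_t\vp_\de$ is handled by the same computation. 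This is exactly the ``self-contained'' fallback in your last paragraph.

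Your main route combines two facts: the group $e^{\im t\mcA_\de}$ preserves the $D(\mcA_\de)$-norm, and Grisvard's $H^2$ regularity holds for the Dirichlet problem with Lipschitz, uniformly elliptic coefficients on a convex domain. It is shorter and more conceptual. It also gives $\vp_\de\in C([0,T];D(\mcA_\de))$ and $\pt_t\vp_\de\in C([0,T];L^2(\Om_\de))$ at once, which the paper proves separately in Lemma \ref{09.19.L1}. The cost is reliance on an external theorem for domains with corners, with a non-explicit constant $C_\de$. The paper's computation is elementary and uses only the separable structure of $\mcA_\de$.

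One small correction. The identity you actually need, and use, is $D(\mcA_\de)=H_0^1(\Om_\de)\cap H^2(\Om_\de)$. The text's statement $H^2(\Om_\de;w_\de)=H^2(\Om_\de)$ is false without the Dirichlet condition. An $\mcA_\de$-harmonic function with boundary data in $H^{1/2}$, but not piecewise in $H^{3/2}$, lies in the first space but not the second.
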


\begin{proof}
	We only need to show $\vp_\de\in L^2(0,T; H^2(\Om_\de))$, since $\vp_\de\in L^2(0,T; H_0^1(\Om_\de;w_\de))$. 
	
	Indeed, from \eqref{09.17.10} and \eqref{09.15.13}, we obtain 
	\begin{equation*}
		\begin{split}
			\iint_{Q_\de} |\pt_{x_1x_1}\vp_\de|^2\df x\df t
			&=\iint_{Q_\de} \left|\sum_{n,k=1}^\iy e^{\im \la_{nk}^\de t}\vp_{\de,nk}^0 \left(\pt_{x_1x_1}\Phi_n^{(1)}\right)\Phi_{\de,k}^{(2)} \right|^2\df x\df t\\
			&= \iint_{Q_\de} \left|\sum_{n,k=1}^\iy e^{\im \la_{nk}^\de t} \vp_{\de, nk}^0\la_n^{(1)}\Phi_n^{(1)}\Phi_{\de,k}^{(2)}\right|^2\df x\df t\\
			&\leq T\sum_{n,k=1}^\iy |\vp_{\de, nk}^0|^2(\la_n^{(1)})^2\leq T \sum_{n,k=1}^\iy |\vp_{\de,nk}^0|^2(\la_{nk}^{\de})^2=T\|\mcA_\de\vp_\de^0\|_{L^2(\Om_\de)}^2, 
		\end{split}
	\end{equation*}
	hence, $\pt_{x_1x_1}\vp_\de\in L^2(Q_\de)$. 
	
	From \eqref{09.17.10} and \eqref{09.15.13}, we obtain 
	\begin{equation*}
		\begin{split}
			&\iint_{Q_\de} x_2^\al|\pt_{x_1x_2}\vp_\de|^2\df x\df t\\
			&=\iint_{Q_\de}x_2^\al\left|\sum_{n,k=1}^\iy e^{\im \la_{nk}^\de t} \vp_{\de, nk}^0 (\pt_{x_1}\Phi_n^{(1)})(\pt_{x_2}\Phi_{\de,k}^{(2)})\right|^2\df x\df t\\
			&=\left(\sum_{n,k=1}^\iy e^{\im \la_{nk}^\de t}\vp_{\de,nk}^0 (\pt_{x_1}\Phi_n^{(1)})(x_2^\f{\al}{2}\pt_{x_2}\Phi_{\de,k}^{(2)}), \sum_{j,l=1}^\iy e^{\im \la_{jl}^\de t}\vp_{\de,jl}^0 (\pt_{x_1}\Phi_j^{(1)})(x_2^\f{\al}{2}\pt_{x_2}\Phi_{\de,l}^{(2)}) \right)_{L^2(Q_\de)}\\
			&\leq T \sum_{n,k=1}^\iy |\vp_{\de,nk}^0|^2 \la_n^{(1)}\la_{\de,k}^{(2)}\leq \f{1}{4}T\sum_{n,k=1}^\iy |\vp_{\de,nk}^0|^2(\la_{nk}^\de)^2=\f{1}{4}T \|\mcA_\de\vp_\de^0\|_{L^2(Q_\de)}^2, 
		\end{split}
	\end{equation*}
	then $\|\pt_{x_1x_2}\vp_\de\|_{L^2(Q_\de)}\leq \f{\sqrt{T}}{2}\de^{-\f{\al}{2}}\|x_2^\f{\al}{2}\pt_{x_1x_2}\vp_\de\|_{L^2(Q_\de)}<+\iy$.
	
	From \eqref{09.17.10} and \eqref{09.15.13}, we get 
	\begin{equation*}
		\begin{split}
			&\iint_{Q_\de} x_2^{2\al} |\pt_{x_2x_2}\vp_\de|^2\df x\df t\\
			&=\iint_{Q_\de} \left|\sum_{n,k=1}e^{\im \la_{nk}^\de t}\vp_{\de, nk}^0 \Phi_n^{(1)}(x_2^\al \pt_{x_2x_2}\Phi_{\de,k}^{(2)})\right|^2\df x\df t\\
			&=\iint_{Q_\de}\left|\sum_{n,k=1}e^{\im \la_{nk}^\de t}\vp_{\de, nk}^0 \Phi_n^{(1)}\left(-\mcA_\de \Phi_{\de,k}^{(2)}-\al x_2^{\al-1} \pt_{x_2}\Phi_{\de,k}^{(2)}\right)\right|^2\df x\df t\\
			&\leq 2T \sum_{n,k=1}^\iy |\vp_{\de, nk}^0|^2 (\la_{\de,k}^{(2)})^2+2\al^2 \iint_{Q_\de} x_2^{\al-2}\left|\sum_{n,k=1}e^{\im \la_{nk}^\de t}\vp_{\de, nk}^0 \Phi_n^{(1)} (x_2^\f{\al}{2} \pt_{x_2}\Phi_{\de,k}^{(2)})\right|^2\df x\df t\\
			&\leq 2T\|\mcA_\de\vp_\de^0\|_{L^2(\Om_\de)}^2+2\al^2\de^{\al-2}T \sum_{n,k=1}^\iy |\vp_{\de,nk}^0|^2 \la_{\de,k}^{(2)}, 
		\end{split}
	\end{equation*} 
	hence, $\|\pt_{x_2x_2}\vp_\de\|_{L^2(Q_\de)}\leq C \de^{-\f{\al}{2}-1}\sqrt{T} \|x_2^\al \pt_{x_2x_2}\vp_\de\|_{L^2(Q_\de)}<+\iy$  since $\la_{\de,k}^{(2)}\leq \la_{nk}^\de$, where the positive constant $C$ depends only on $\al$. 
	
	Finally, we have 
	\begin{equation*}
		\begin{split}
			\iint_{Q_\de} |\pt_t\vp_\de|^2\df x\df t
			&=\iint_{Q_\de}\left|\sum_{n,k=1}^\iy (\pt_te^{\im \la_{nk}^\de t})\vp_{\de,nk}^0 \Phi_n^{(1)}\Phi_{\de,k}^{(2)}\right|^2\df x\df t\\
			&=\iint_{Q_\de} \left|\sum_{n,k=1}^\iy (\im \la_{nk}^\de )e^{\im \la_{nk}^\de t}\vp_{\de,nk}^0 \Phi_n^{(1)}\Phi_{\de,k}^{(2)}\right|^2\df x\df t\\
			&\leq T \sum_{n,k=1}^\iy |\vp_{\de,nk}^0|^2 (\la_{nk}^\de)^2=T\|\mcA_\de\vp_\de^0\|_{L^2(\Om_\de)}^2, 
		\end{split}
	\end{equation*}
	hence $\pt_t\vp_\de\in L^2(Q_\de)$.  
	We prove the lemma. 
\end{proof}

\begin{remark}\label{09.17.R3}
	From the proof of Lemma \ref{09.17.L1}, we observe that the
	$H^2$-regularity of $\vp_\delta$ relies on the fact that the degeneracy is removed by the shape design (i.e.,  $\mathcal A_\delta$ in $\Omega_\delta$ for $\delta>0$).
	However, for the original degenerate problem \eqref{09.15.1},
	a weak solution $\vp$ does not necessarily belong to
	$L^2(0,T;H^2(\Omega))$ in general. 
	Indeed, the degeneracy of the coefficient $x_2^\alpha$ prevents
	the standard elliptic $H^2$-regularity estimate from being applied
	up to the degenerate boundary $\Gamma_2^0$.
\end{remark}

\subsection{Spectral approximation of $\mcA^{(2)}$}

In this subsection, we regard the case $\delta=0$ as the original
degenerate problem \eqref{09.15.1} in Sections \ref{S2.1}-\ref{S2.3}, while the cases
$\delta\in(0,\frac18)$ correspond to the uniformly elliptic shape
design problems \eqref{09.16.27}.

Let $\de\in [0,\f{1}{8})$.  Define
\begin{equation*}
	r_\de: L^2(\La_2)\ra L^2(\La_2), \quad f\mapsto E_\de^0 u_\de^f, 
\end{equation*}
where $u_\de^f$ is the weak solution of the following degenerate elliptic equation 
\begin{equation}\label{09.17.9}
	\begin{cases}
		\mcA_\de^{(2)}u_\de^f=f|_{\La_2^\de}, &\mbox{in }\La_2^\de, \\
		u_\de^f(x)=0, &\mbox{on } \pt\La_2^\de. 
	\end{cases}
\end{equation}
Then, for each $\de\in [0,\f{1}{8})$, from the Lax-Milagram, there exists a unique weak solution $u_\de^f\in H_0^1(\La_2^\de;w_\de)$ to the equation \eqref{09.17.9}, such that 
\begin{equation*}
	\int_{\La_2^\de} w_\de (\pt_{x_2}u_\de ^f)\ol{(\pt_{x_2} v)}\df x_2=\int_{\La_2^\de} f\ol v\df x_2, \mbox{ for all } v\in H_0^1(\La_2^\de;w_\de). 
\end{equation*}
Moreover, from Lemma \ref{09.15.L4} and Remark \ref{09.17.R1}, we get 
\begin{equation}\label{09.18.5}
	\begin{split} 
	\|r_\de f\|_{L^2(\La_2)}
	&=\|E_\de^0 u_\de^f\|_{L^2(\La_2)}=\|u_\de^f\|_{L^2(\La_2^\de)}\\
	&\leq C\|u_\de^f\|_{H_0^1(\La_2^\de;w_\de)}\leq C\|f|_{\Om_\de}\|_{L^2(\La_2^\de)}\leq C\|f\|_{L^2(\La_2)}, 
	\end{split} 
\end{equation}
where the positive constant $C$ depends only on $\al$. Hence $r_\de: L^2(\La_2)\ra L^2(\La_2)$ is a linear bounded operator.  Moreover, since   the embedding
$
H_0^1(\Lambda_2;w)\hookrightarrow L^2(\Lambda_2)
$
is compact, $r_\delta$ is compact. Hence, $r_\de\ (0\leq \de< \f{1}{8})$ is a compact, self-adjoint and nonnegative operator on $L^2(\La_2)$. 

\begin{lemma}\label{09.17.L2}
	Let $f\in L^2(\La_2)$. Then
	\begin{equation*}
		r_\de(f)\ra r_0(f) \mbox{ strongly in } H_0^1(\La_2;w) \mbox{ as } \de \ra 0^+. 
	\end{equation*}
\end{lemma}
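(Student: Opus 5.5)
We will prove the convergence by a Céa/projection argument in the Hilbert space $H_0^1(\La_2;w)$ with inner product $a^{(2)}(\cdot,\cdot)$, using that $E_\de^0 u_\de^f$ is the Galerkin approximation of $u_0^f$ on a closed subspace.

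Set $V=H_0^1(\La_2;w)$ and $V_\de=E_\de^0 H_0^1(\La_2^\de;w_\de)\s V$; by Remark \ref{09.17.R1} each $V_\de$ is a closed subspace of $V$. For $v\in V_\de$, the zero extension gives
\begin{equation*}
a^{(2)}(r_\de f, v)=\int_{\La_2^\de} x_2^\al(\pt_{x_2}u_\de^f)\ol{(\pt_{x_2}v)}\df x_2=\int_{\La_2^\de} f\ol v\df x_2=\int_{\La_2} f\ol v\df x_2=a^{(2)}(r_0 f,v),
\end{equation*}
since $v$ vanishes on $(0,\de)$. Hence $a^{(2)}(r_0f-r_\de f,v)=0$ for all $v\in V_\de$, so $r_\de f=P_\de r_0 f$, where $P_\de$ is the $a^{(2)}$-orthogonal projection of $V$ onto $V_\de$. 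Consequently
\begin{equation*}
\|r_\de f-r_0 f\|_{H_0^1(\La_2;w)}=\dist_{a^{(2)}}(r_0f,V_\de)\leq \|r_0f-v\|_{H_0^1(\La_2;w)}\quad\mbox{for all } v\in V_\de.
\end{equation*}

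It remains to show that $\bigcup_{\de>0}V_\de$ is dense in $V$. The subspaces increase as $\de\downarrow0$, since $V_\de\s V_{\de'}$ for $\de'<\de$. By definition $V$ is the closure of $C_0^\iy(\La_2)$ (the one-dimensional analogue of Section \ref{S2.1}). Any $\psi\in C_0^\iy(\La_2)$ has support in $[\de_0,1-\de_0]$ for some $\de_0>0$, so $\psi\in V_\de$ for all $\de<\de_0$. Given $\e>0$, choose $\psi\in C_0^\iy(\La_2)$ with $\|r_0f-\psi\|_{H_0^1(\La_2;w)}<\e$. Then for every $\de<\de_0(\psi)$ we get $\|r_\de f-r_0f\|_{H_0^1(\La_2;w)}<\e$, which proves the lemma.

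The main point to verify carefully is the Galerkin identity, that is, that the weak formulation on $\La_2^\de$ tested against $v|_{\La_2^\de}$ coincides with the form on $\La_2$ for zero-extended test functions. This follows directly from $\pt_{x_2}(E_\de^0 v)=E_\de^0\pt_{x_2}v$ a.e., which is valid since $v\in H_0^1(\La_2^\de)$ has zero trace at $\de$. Density of $C_0^\iy$ holds by definition of $V$, so no weighted approximation argument near $x_2=0$ is needed.
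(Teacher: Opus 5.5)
Your proof is correct, and it takes a genuinely different route from the paper's. The paper argues by compactness. From the uniform bound $\|r_\de f\|_{H_0^1(\La_2;w)}\le C\|f\|_{L^2(\La_2)}$ it extracts a subsequence converging weakly in $H_0^1(\La_2;w)$ and strongly in $L^2(\La_2)$. It identifies the limit as $r_0 f$ by testing against $v\in C_0^\infty(\La_2)$ once $\de<\dist(\supp v,\{0\})$. It then upgrades to strong convergence through the energy identity $a^{(2)}(r_\de f,r_\de f)=(f,r_\de f)_{L^2(\La_2)}\to(f,r_0f)_{L^2(\La_2)}=a^{(2)}(r_0f,r_0f)$, using that weak convergence plus convergence of norms gives strong convergence in a Hilbert space. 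You instead observe that $r_\de f$ is exactly the $a^{(2)}$-orthogonal projection of $r_0f$ onto the nested subspaces $V_\de$. The error is therefore the best-approximation error $\dist(r_0f,V_\de)$, and density of $\bigcup_\de V_\de\supseteq C_0^\infty(\La_2)$ finishes the proof.

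Your version has several advantages:
\begin{itemize}
\item It needs neither the uniform bound nor the compact embedding.
\item It gives convergence of the whole family at once. The paper's argument runs along subsequences and leaves implicit the uniqueness-of-limit step needed for the full statement.
\item It shows that $\|r_\de f-r_0f\|_{H_0^1(\La_2;w)}$ is nonincreasing as $\de\downarrow0$.
\item It would give rates once one quantifies how well $r_0f$ is approximated by functions vanishing near $x_2=0$.
\item It shortcuts Lemma~\ref{08.04.L10}. Since $r_0\colon L^2(\La_2)\to H_0^1(\La_2;w)$ is compact (it is the adjoint of the compact embedding), and $I-P_\de\to0$ strongly with $\|I-P_\de\|\le1$, the identity $r_\de-r_0=-(I-P_\de)r_0$ gives operator-norm convergence directly. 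This replaces the paper's contradiction argument.
\end{itemize}

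The paper's compactness template has its own merit: it does not rely on the approximating problems being exact Galerkin restrictions of the limit form. It would therefore survive perturbations such as $\de$-dependent coefficients. It is also the scheme the paper reuses for the eigenfunctions in Lemma~\ref{09.18.L1}, where no such projection identity is available.
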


\begin{proof}
	For each $\de\in [0,\f{1}{8})$, from Definition \ref{09.17.D2},  we have 
	\begin{equation}\label{09.18.9}
		\|r_\de(f)\|_{H_0^1(\La_2;w)}=\|E_\de^0 u_\de^f\|_{H_0^1(\La_2;w)}=\|u_\de^f\|_{H_0^1(\La_2^\de;w_\de)}\leq C\|f|_{\La_2^\de}\|_{L^2(\La_2^\de)}\leq C\|f\|_{L^2(\La_2)}, 
	\end{equation}
	where the positive constants $C$ depends only on $\al$.  Hence, up to a subsequence, there exists $u_0\in H_0^1(\La_2;w)$ such that 
	\begin{equation}\label{09.17.11}
		r_\de(f)\ra u_0 \mbox{ weakly in }H_0^1(\La_2;w), \mbox{ and } r_\de(f)\ra u_0 \mbox{ strongly in } L^2(\La_2). 
	\end{equation}
	
	For each $v\in C_0^\iy(\La_2)$, for all $\de\in (0, \dist(\supp v, \{0\}))$,  we have 
	\begin{equation*}
		\int_{\La_2} w(\pt_{x_2}u_\de^f)\ol{(\pt_{x_2}v)}\df x_2=\int_{\La_2^\de} w_\de (\pt_{x_2} u_\de^f)\ol{(\pt_{x_2} v)}\df x_2=\int_{\La_2^\de} f|_{\La_2^\de} \ol v\df x_2=\int_{\La_2}f\ol v\df x_2, 
	\end{equation*}
	and then, from \eqref{09.17.11},  
	\begin{equation*}
		\int_{\La_2} w(\pt_{x_2}u_0)\ol{(\pt_{x_2}v)}\df x_2=\int_{\La_2} f\ol v\df x_2. 
	\end{equation*}
	Hence, $u_0$ is a weak solution of \eqref{09.17.9} with $\de=0$. Which implies 
	\begin{equation}\label{09.17.12}
		u_0=r_0(f). 
	\end{equation}
	
	Finally, from \eqref{09.17.11}, we obtain 
	\begin{equation*}
		\int_{\La_2} w|\pt_{x_2}r_\de(f)|^2\df x_2=\int_{\La_2} f\ol{r_\de(f)}\df x_2\ra \int_{\La_2} f\ol{r_0(f)}\df x_2=\int_{\La_2} w|\pt_{x_2}r_0(f)|^2\df x_2. 
	\end{equation*}
	Combining this and \eqref{09.17.11}, and the classical result: if $u_n\ra u$ weakly in a Hilbert space $H$, and $\|u_n\|_H\ra \|u\|_H$, then $u_n\ra u$ strongly in $H$, we obtain 
	\begin{equation*}
		r_\de(f)\ra r_0(f) \mbox{ strongly in } H_0^1(\La_2;w). 
	\end{equation*}
	This proves the lemma. 
\end{proof}

\begin{remark}
	Lemma \ref{09.17.L2} may be viewed as a resolvent convergence
	result for the family of elliptic operators ${\mathcal A_\de^{(2)}}$.
	The convergence is obtained directly from the uniform energy
	estimates, the compact embedding, and the variational formulation.
	This type of resolvent convergence is closely related to the
	convergence of the associated quadratic forms.
\end{remark}

\begin{lemma}[{\cite[Corollaries XI.9.3 and XI.9.4]{Dunford}}]\label{08.04.L11}
	Let $r_1,r_2$ be compact, self-adjoint and nonnegative operators on a Hilbert space $H$. For every $m,n\geq 1$ we have 
	\begin{equation*}
		|\xi_n(r_1)-\xi_n(r_2)|\leq \|r_1-r_2\|_{\mcL(H)},  
	\end{equation*}
	where $\xi_n(r_i)$ is the $n$th eigenvalue of $r_i$ for $i=1,2$. 
\end{lemma}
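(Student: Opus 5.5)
The statement is the classical Weyl-type perturbation bound for eigenvalues of compact self-adjoint nonnegative operators. My plan is to derive it from the Courant--Fischer min-max characterization of the eigenvalues. Order the eigenvalues of each $r_i$ nonincreasingly, repeated according to multiplicity and padded by zeros if the range is finite-dimensional. Then, for every $n\geq 1$,
\begin{equation*}
	\xi_n(r_i)=\sup_{\substack{V\s H\\ \dim V=n}}\ \inf_{\substack{u\in V\\ \|u\|_H=1}} (r_iu,u)_H .
\end{equation*}
This variational formula holds for compact self-adjoint nonnegative operators. I would establish it by the standard argument. Take $V$ spanned by the first $n$ eigenvectors, which gives ``$\geq$''. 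Conversely, any $n$-dimensional $V$ meets the closed span of the eigenvectors of index $\geq n$ (together with $\ker r_i$) nontrivially, which gives ``$\leq$''.

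Next, compare the quadratic forms pointwise. For every unit vector $u$,
\begin{equation*}
	(r_1u,u)_H\leq (r_2u,u)_H+|((r_1-r_2)u,u)_H|\leq (r_2u,u)_H+\|r_1-r_2\|_{\mcL(H)} .
\end{equation*}
Taking the infimum over unit $u\in V$ and then the supremum over $n$-dimensional $V$ preserves this inequality. Hence $\xi_n(r_1)\leq \xi_n(r_2)+\|r_1-r_2\|_{\mcL(H)}$. Exchanging the roles of $r_1$ and $r_2$ gives the reverse bound, and therefore $|\xi_n(r_1)-\xi_n(r_2)|\leq \|r_1-r_2\|_{\mcL(H)}$ for every $n$. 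The index $m$ in the statement plays no role.

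The only delicate step is the min-max formula itself, specifically the bookkeeping when $H$ is infinite-dimensional or $r_i$ has nontrivial kernel. There the zero padding must be consistent with the formula. I would handle this through the dimension-counting argument: any $n$-dimensional $V$ intersects the orthogonal complement of the span of the first $n-1$ eigenvectors. On that complement the form $(r_iu,u)_H$ is bounded by $\xi_n(r_i)$, with $\xi_n(r_i)=0$ once the nonzero eigenvalues are exhausted, because $r_i$ is nonnegative. Since the paper cites \cite{Dunford} for this result, I would present the argument briefly and refer to the reference for the min-max principle.
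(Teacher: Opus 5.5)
Your argument is correct and complete. The paper does not prove this lemma; it only cites Dunford--Schwartz, and your Courant--Fischer comparison of the forms $(r_1u,u)_H$ and $(r_2u,u)_H$ is the standard min-max proof behind that citation. The stray index $m$ is presumably a leftover of the two-index Weyl-type inequality $\xi_{n+m-1}(r_1+r_2)\le\xi_n(r_1)+\xi_m(r_2)$ in the source, and you are right to ignore it. Your convention of listing the positive eigenvalues nonincreasingly with multiplicity and padding by zeros is also what the paper needs when it applies the lemma to $r_\de$, which has the infinite-dimensional kernel $L^2(0,\de)$ and satisfies $\xi_k(r_\de)=1/\la_{\de,k}^{(2)}$.
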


\begin{lemma}\label{08.04.L10}
	For each $k\in\N^*$, we have 
	\begin{equation*}
		\la_{\de,k}^{(2)}\dra  \la_k^{(2)} \mbox{ as } \de\ra 0^+. 
	\end{equation*}
\end{lemma}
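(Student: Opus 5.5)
The proof has two parts. Monotonicity follows from the min-max principle. Convergence is obtained by upgrading Lemma \ref{09.17.L2} to norm convergence of the resolvents and then applying Lemma \ref{08.04.L11}.

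\emph{Monotonicity.} For $0\le\gamma<\delta$, Remark \ref{09.17.R1} lets us regard $H_0^1(\Lambda_2^\delta;w_\delta)$ as a closed subspace of $H_0^1(\Lambda_2^\gamma;w_\gamma)$ via $E_\delta^\gamma$. This extension preserves both the form $a^{(2)}$ and the $L^2$ norm. The Courant--Fischer characterization
\begin{equation*}
\lambda_{\delta,k}^{(2)}=\min_{\substack{V\subseteq H_0^1(\Lambda_2^\delta;w_\delta)\\ \dim V=k}}\ \max_{0\neq u\in V}\frac{a^{(2)}_\delta(u,u)}{\|u\|_{L^2(\Lambda_2^\delta)}^2}
\end{equation*}
is therefore a minimum over a smaller family when $\delta$ is larger. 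Hence $\lambda_{\gamma,k}^{(2)}\le\lambda_{\delta,k}^{(2)}$, so $\delta\mapsto\lambda_{\delta,k}^{(2)}$ is nondecreasing and bounded below by $\lambda_k^{(2)}$. In particular the limit as $\delta\to0^+$ exists and is at least $\lambda_k^{(2)}$; this is the ``$\downarrow$'' part of the statement.

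\emph{Identifying the limit.} First relate the spectra. The nonzero eigenvalues of $r_\delta$ are exactly $1/\lambda_{\delta,k}^{(2)}$, with eigenfunctions $E_\delta^0\Phi^{(2)}_{\delta,k}$. On the complement $L^2(0,\delta)$, $r_\delta$ acts as $0$, since it only sees $f|_{\Lambda_2^\delta}$. Consequently $\xi_k(r_\delta)=1/\lambda_{\delta,k}^{(2)}$ and $\xi_k(r_0)=1/\lambda_k^{(2)}$, with the eigenvalues ordered decreasingly. By Lemma \ref{08.04.L11}, it then suffices to show $\|r_\delta-r_0\|_{\mathcal L(L^2(\Lambda_2))}\to0$.

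\emph{Main obstacle: norm convergence.} Lemma \ref{09.17.L2} gives only strong (pointwise) convergence. To upgrade it, argue by contradiction. Suppose there exist $\delta_j\to0$ and $\|f_j\|_{L^2}=1$ with $\|(r_{\delta_j}-r_0)f_j\|_{L^2}\ge\varepsilon$. Passing to a subsequence, $f_j\rightharpoonup f$ weakly in $L^2(\Lambda_2)$.

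By \eqref{09.18.9}, the sequence $r_{\delta_j}f_j$ is bounded in $H_0^1(\Lambda_2;w)$. Remark \ref{09.15.R3}(2) then gives strong $L^2$ convergence of a subsequence to some $u$. Repeating the variational argument of Lemma \ref{09.17.L2}, with test functions $v\in C_0^\infty(\Lambda_2)$ and $\delta_j<\operatorname{dist}(\operatorname{supp}v,\{0\})$, and using weak convergence of $f_j$ on the right-hand side, shows $u=r_0f$. Since $r_0$ is compact, we also have $r_0f_j\to r_0f$ strongly. Therefore $\|(r_{\delta_j}-r_0)f_j\|_{L^2}\to0$, which is the desired contradiction.

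Hence $|1/\lambda_{\delta,k}^{(2)}-1/\lambda_k^{(2)}|\le\|r_\delta-r_0\|\to0$. Together with the monotonicity established above, this yields $\lambda_{\delta,k}^{(2)}\downarrow\lambda_k^{(2)}$.
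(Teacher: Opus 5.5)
Your proposal is correct and follows essentially the same route as the paper: monotonicity via the min--max characterization with the norm-preserving zero extension $E_\delta^\gamma$, then $\|r_\delta-r_0\|_{\mathcal L(L^2(\Lambda_2))}\to0$ by a contradiction/compactness argument combined with Lemma~\ref{08.04.L11}. The only cosmetic difference is that you identify the limit of $r_{\delta_j}f_j$ directly from the variational formulation (and get $r_0f_j\to r_0f$ from compactness of $r_0$), whereas the paper uses the duality $(r_{\delta_j}f_j,g)=(f_j,r_{\delta_j}g)$ together with the strong convergence of Lemma~\ref{09.17.L2}.
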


\begin{proof}
	We prove this lemma by the following steps. 
	
	{\it Step 1}. We have known that 
	\begin{equation*}
		\begin{split}
			\la_{\de,k}^{(2)}
			&=\min_{E_k\s H_0^1(\La_2^\de;w_\de)\atop
			\dim E_k=k}\max_{0\neq v\in E_k}\f{\int_{\La_2^\de} w_\de |\pt_{x_2} v|^2\df x_2}{\int_{\La_2^\de} |v|^2\df x_2} 
		\end{split}
	\end{equation*}
	where $\Phi_{\de,i}^{(2)}\ (i\in\N^*)$ is defined in \eqref{09.18.1} and \eqref{09.18.2} and \eqref{09.18.3}. 
	
	\noindent
	{\it Step 2}. We prove that $
	\lambda_{\de_1,k}^{(2)}
	\leq
	\lambda_{\de_2,k}^{(2)}, 
	0\leq\de_1\leq\de_2<\frac18$. 
	
	Indeed, let $
	E_k^{\de_2}
	=
	\operatorname{Span}
	\{
	\Phi_{\de_2,1}^{(2)},\ldots,
	\Phi_{\de_2,k}^{(2)}
	\}$. 
	Since $
	\La_2^{\de_2}\s \La_2^{\de_1}$,  
	the zero extension 
	$$
	E_{\de_2}^{\de_1}:
	H_0^1(\La_2^{\de_2};w_{\de_2})
	\ra 
	H_0^1(\La_2^{\de_1};w_{\de_1})
	$$ 
	is well defined in Definition \ref{09.17.D2}. Moreover, $
	E_{\de_2}^{\de_1}E_k^{\de_2}
	\s  H_0^1(\La_2^{\de_1};w_{\de_1})$ with $
	\dim(E_{\de_2}^{\de_1}E_k^{\de_2})=k$. 
	For every $v\in E_k^{\de_2}$, the zero extension preserves both the
	$L^2$ norm and the weighted Dirichlet energy. Hence 
	$$
	\frac{
		a_{\de_1}^{(2)}(E_{\de_2}^{\de_1}v,E_{\de_2}^{\de_1}v)}
	{\|E_{\de_2}^{\de_1}v\|_{L^2(\La_2^{\de_1})}^2}
	=
	\frac{
		a_{\de_2}^{(2)}(v,v)}
	{\|v\|_{L^2(\La_2^{\de_2})}^2}.
	$$ 
	It follows from the min-max characterization that 
	$$
	\begin{aligned}
		\lambda_{\de_1,k}^{(2)}
		&\leq
		\max_{0\neq v\in E_k^{\de_2}}
		\frac{
			a_{\de_1}^{(2)}(E_{\de_2}^{\de_1}v,E_{\de_2}^{\de_1}v)}
		{\|E_{\de_2}^{\de_1}v\|_{L^2(\La_2^{\de_1})}^2} =
		\max_{0\neq v\in E_k^{\de_2}}
		\frac{
			a_{\de_2}^{(2)}(v,v)}
		{\|v\|_{L^2(\La_2^{\de_2})}^2}
		=
		\lambda_{\de_2,k}^{(2)}.
	\end{aligned}
	$$ 
	Therefore,
	\begin{equation}\label{09.18.4}
		\lambda_{\de_1,k}^{(2)}
		\leq
		\lambda_{\de_2,k}^{(2)},
		\mbox{ for all } k\in\N^* \mbox{ and all }
		0\leq\de_1\leq\de_2<\frac18.
	\end{equation}
	
	\noindent
	{\it Step 3}. We prove that $\la_{\de,k}^{(2)}$ is continuous at
	$\de=0$.
	
	From Lemma \ref{08.04.L11} and $
	r_\delta=E_\delta^0\left(A_\delta^{(2)}\right)^{-1}, 
	\de\in[0,\tfrac18)$ with $H=L^2(\La_2)$, 
	it is sufficient to prove that
	\begin{equation}\label{09.18.7}
		\|r_\de-r_0\|_{\mcL(L^2(\La_2))}
		\ra0 \mbox{ as }\de\ra0^+.
	\end{equation}
	i.e., eigenvalue convergence follows from operator norm convergence. 
	Indeed, since $r_\de$ and $r_0$ are compact, self-adjoint and
	nonnegative operators on $H=L^2(\La_2)$, their positive eigenvalues satisfy 
	$$
	\xi_k(r_\de)=\frac{1}{\la_{\de,k}^{(2)}},
	\quad
	\xi_k(r_0)=\frac{1}{\la_k^{(2)}}.
	$$ 
	Hence, by Lemma \ref{08.04.L11},
	\begin{equation*}
		\left|
		\frac{1}{\la_{\de,k}^{(2)}} -
		\frac{1}{\la_k^{(2)}}
		\right|
		\leq
		\|r_\de-r_0\|_{\mcL(H)}
		\ra0.
	\end{equation*}
	Since $\f{(1-\al)^2}{4}\leq \la_k^{(2)}\leq \la_{\de,k}^{(2)}\leq \la_{\f{1}{8},k}^{(2)}$ , we consequently obtain 
	$$
	\la_{\de,k}^{(2)}
	\ra
	\la_k^{(2)} \mbox{ as }\de\ra0^+.
	$$ 
	
	It remains to prove \eqref{09.18.7}.
	
	Suppose, by contradiction, that
	\eqref{09.18.7} does not hold. Then there exist $\e_0>0$, a sequence
	$\de_j\ra0^+$, and $f_j\in H$ with $\|f_j\|_H=1$ such that
	\begin{equation}\label{09.18.6}
		\|(r_{\de_j}-r_0)f_j\|_H\geq\e_0.
	\end{equation}
	By the uniform estimate in \eqref{09.18.9},  $
	\|r_{\de_j}f_j\|_{H_0^1(\La_2;w)}
	\leq C, 
	\|r_0f_j\|_{H_0^1(\La_2;w)}
	\leq C$. 
	Therefore, by the compact embedding $
	H_0^1(\La_2;w)\hookrightarrow H$, 
	after passing to a subsequence, there exist $u,v\in H$ such that 
	$$
	r_{\de_j}f_j\ra u,
	\mbox{ and }
	r_0f_j\ra v \mbox{ strongly in }H.
	$$ 
	Since ${f_j}$ is bounded in $H$, after passing to a further subsequence, 
	$$
	f_j\ra f \mbox{ weakly in }H
	$$ 
	for some $f\in H$.
	
	We claim that $
	u=v=r_0f$. 
	
	Indeed, by the self-adjointness of $r_0$, 
	$$
	(r_0f_j,g)_H=(f_j,r_0g)_H
	\ra(f,r_0g)_H
	=(r_0f,g)_H
	$$ 
	for every $g\in H$. Hence $
	r_0f_j\ra r_0f$ weakly in $H$. 
	Together with the strong convergence of $r_0f_j$ in $H$, this gives $
	v=r_0f$.  
	On the other hand, for every $g\in H$, $
	(r_{\de_j}f_j,g)_H
	=
	(f_j,r_{\de_j}g)_H$. 
	By Lemma \ref{09.17.L2}, $
	r_{\de_j}g\ra r_0g$ strongly in $H$, 
	while $f_j\ra f$ weakly in $H$. Thus 
	$$
	(f_j,r_{\de_j}g)_H
	\ra
	(f,r_0g)_H
	=
	(r_0f,g)_H.
	$$ 
	Consequently, 
	$$
	r_{\de_j}f_j\ra r_0f \mbox{ weakly in }H.
	$$ 
	Since $r_{\de_j}f_j\to u$ strongly in $H$, we obtain  $
	u=r_0f$.  
	Therefore, 
	$$
	\|(r_{\de_j}-r_0)f_j\|_H
	\ra0,
	$$ 
	which contradicts \eqref{09.18.6}. Hence \eqref{09.18.5} holds. We prove the lemma. 
\end{proof}

\begin{lemma}\label{09.18.L1}
	For each $k\in\N^*$, we have 
	\begin{equation*}
		E_\de^0\Phi_{\de,k}^{(2)}\ra \Phi_k ^{(2)} \mbox{ strongly in } H_0^1(\La_2;w) \mbox{ as }\de\ra 0^+. 
	\end{equation*}
\end{lemma}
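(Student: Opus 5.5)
The plan is a compactness argument combined with simplicity of the limit eigenvalue. Eigenfunctions are only determined up to sign, so I first fix the normalization: since each $\Phi_{\de,k}^{(2)}$ is real and unique up to sign (the spectrum \eqref{09.18.2} is simple), choose the sign so that $(E_\de^0\Phi_{\de,k}^{(2)},\Phi_k^{(2)})_{L^2(\La_2)}\geq 0$. The statement should be read with this convention.

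\emph{Step 1: uniform bound and extraction.} By Definition \ref{09.17.D2},
\begin{equation*}
\|E_\de^0\Phi_{\de,k}^{(2)}\|_{H_0^1(\La_2;w)}^2=a_\de^{(2)}(\Phi_{\de,k}^{(2)},\Phi_{\de,k}^{(2)})=\la_{\de,k}^{(2)}\leq \la_{\f18,k}^{(2)},
\end{equation*}
where the last inequality is the monotonicity \eqref{09.18.4}. So the family is bounded in $H_0^1(\La_2;w)$. Given any sequence $\de_j\ra0^+$, I extract a subsequence along which $E_{\de_j}^0\Phi_{\de_j,k}^{(2)}\ra\psi$ weakly in $H_0^1(\La_2;w)$. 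By the compact embedding of Remark \ref{09.15.R3}(2), the convergence is strong in $L^2(\La_2)$. In particular $\|\psi\|_{L^2(\La_2)}=1$, and $(\psi,\Phi_k^{(2)})_{L^2(\La_2)}\geq0$ by the sign convention.

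\emph{Step 2: identifying the limit.} Fix $v\in C_0^\iy(\La_2)$. For $\de_j<\dist(\supp v,\{0\})$, the weak eigen-equation on $\La_2^{\de_j}$ can be rewritten on $\La_2$:
\begin{equation*}
\int_{\La_2} w\,\pt_{x_2}(E_{\de_j}^0\Phi_{\de_j,k}^{(2)})\,\ol{\pt_{x_2}v}\,\df x_2=\la_{\de_j,k}^{(2)}\int_{\La_2}E_{\de_j}^0\Phi_{\de_j,k}^{(2)}\,\ol v\,\df x_2 .
\end{equation*}
This is the same device used in the proof of Lemma \ref{09.17.L2}. Pass to the limit using the weak $H_0^1$ convergence on the left, and Lemma \ref{08.04.L10} together with the strong $L^2$ convergence on the right. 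By density of $C_0^\iy(\La_2)$ in $H_0^1(\La_2;w)$, this gives $a^{(2)}(\psi,v)=\la_k^{(2)}(\psi,v)_{L^2}$ for all $v\in H_0^1(\La_2;w)$. Hence $\psi$ is a normalized eigenfunction of $\mcA^{(2)}$ for $\la_k^{(2)}$.

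This identification is the main obstacle: I need the eigenspace of $\la_k^{(2)}$ to be one-dimensional. That follows from the gap estimate \eqref{08.05.7}, which forces strict inequalities $\la_k^{(2)}<\la_{k+1}^{(2)}$ (alternatively, from the explicit Bessel form in Remark \ref{08.05.R1}). Hence $\psi=\pm\Phi_k^{(2)}$. The sign condition $(\psi,\Phi_k^{(2)})\geq0$ then gives $\psi=\Phi_k^{(2)}$.

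\emph{Step 3: strong convergence.} The norms converge:
\begin{equation*}
\|E_{\de_j}^0\Phi_{\de_j,k}^{(2)}\|_{H_0^1(\La_2;w)}^2=\la_{\de_j,k}^{(2)}\ra\la_k^{(2)}=\|\Phi_k^{(2)}\|_{H_0^1(\La_2;w)}^2,
\end{equation*}
again by Lemma \ref{08.04.L10}. Weak convergence plus convergence of norms yields strong convergence in $H_0^1(\La_2;w)$, exactly as at the end of the proof of Lemma \ref{09.17.L2}. Since every sequence $\de_j\ra0^+$ has a subsequence converging to the same limit $\Phi_k^{(2)}$, the whole family converges as $\de\ra0^+$.
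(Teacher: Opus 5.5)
Your proposal is correct and follows the paper's proof essentially step for step: the uniform bound $\|E_\de^0\Phi_{\de,k}^{(2)}\|_{H_0^1(\La_2;w)}^2=\la_{\de,k}^{(2)}\le\la_{\f{1}{8},k}^{(2)}$, weak compactness, identification of the limit via the weak eigen-equation tested against $v\in C_0^\iy(\La_2)$ together with simplicity of $\la_k^{(2)}$ from \eqref{08.05.7}, and finally the norm convergence $\la_{\de,k}^{(2)}\ra\la_k^{(2)}$, which upgrades weak to strong convergence. Your normalization $(E_\de^0\Phi_{\de,k}^{(2)},\Phi_k^{(2)})_{L^2(\La_2)}\ge 0$, combined with the subsequence-of-subsequence argument, handles the sign ambiguity more cleanly than the paper's remark about choosing the sign of $\Phi_k^{(2)}$ consistently, and it is what actually gives convergence of the whole family rather than only along a subsequence.
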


\begin{proof}
	We prove this lemma by the following steps. 
	
	{\it Step 1}. We have known that the eigenvalue $\la_k^{(2)}$ is simple from \eqref{08.05.7}, since $\mcA^{(2)}$ is a Strum-Liouville operator. 
	
	{\it Step 2}. We prove $E_\de^0\Phi_{\de,k}^{(2)}\ra \Phi_k^{(2)}$ weakly in $H_0^1(\La_2;w)$ as $\de \ra 0^+$. 
	
	From \eqref{09.18.3} and Definition \ref{09.17.D2} and Lemma \ref{08.04.L10}, we obtain 
	\begin{equation*}
		\begin{split}
			\|E_\de^0\Phi_{\de,k}^{(2)}\|_{H_0^1(\La_2;w)}^2
			&=\|\Phi_{\de,k}^{(2)}\|_{H_0^1(\La_2^\de;w_\de)}^2=\lambda_{\delta,k}^{(2)}
\left\|\Phi_{\delta,k}^{(2)}\right\|_{L^2(\Lambda_\delta^s)}^2=\la_{\de,k}^{(2)}\leq \la_{\f{1}{8},k}^{(2)}, 
		\end{split}
	\end{equation*}
	then, up to a subsequence, and $u_{k}\in H_0^1(\La_2;w)$ such that 
	\begin{equation}\label{09.18.10}
		E_\de^0\Phi_{\de,k}^{(2)}\ra u_{k} \mbox{ weakly in } H_0^1(\La_2;w) \mbox{ and } E_\de^0\Phi_{\de,k}^{(2)}\ra u_k \mbox{ strongly in } L^2(\La_2) \mbox{ as } \de\ra 0^+.  
	\end{equation}
	For each $v\in C_0^\iy(\La_2)$, for all $\de\in (0,\dist(\supp v,\{0\}))$, we have $v\in C_0^\iy(\La_2^\de)$ and 
	\begin{equation*}
		\begin{split} 
		\int_{\La_2} w(\pt_{x_2}  E_\de^0\Phi_{\de,k}^{(2)})(\pt_{x_2}  \ol v)\df x_2
		&=\int_{\La_2^\de} w(\pt_{x_2}  \Phi_{\de,k}^{(2)})(\pt_{x_2}  \ol v)\df x_2\\
		&=\la_{\de,k}^{(2)}\int_{\La_2^\de} \Phi_{\de,k}^{(2)}\ol v\df x_2=\la_{\de,k}^{(2)}\int_{\La_2}(E_\de^0\Phi_{\de,k}^{(2)})\ol v\df x_2
		\end{split} 
	\end{equation*} 
	by Definition \ref{09.17.D2}. Combining this and \eqref{09.18.10} and Lemma \ref{08.04.L10}, we get
	\begin{equation*}
		\int_{\La_2} w(\pt_{x_2}u_k)(\pt_{x_2}\ol v)\df x_2=\la_k^{(2)}\int_{\La_2}u_k\ol v\df x_2, \mbox{ for all } v\in C_0^\iy(\La_2). 
	\end{equation*}
	This shows that $u_k$ is the $k$-th eigenfunction of $\mcA^{(2)}$ with respect to $\la_k^{(2)}$. Moreover, from \eqref{09.18.10} we obtain $\|u_k\|_{L^2(\La_2)}=1$ and hence $u_k=\Phi_k^{(2)}$ by Step 1. Otherwise, we replace $\Phi_k^{(2)}$ by $u_k$, which amounts to choosing the sign of the normalized eigenfunction $\Phi_k^{(2)}$ consistently.

	{\it Step 3}. We prove $\|E_\de^0\Phi_{\de,k}^{(2)}\|_{H_0^1(\La_2;w)}\ra \|\Phi_k^{(2)}\|_{H_0^1(\La_2;w)}$ as $\de\ra 0^+$. 
	
	Indeed, from Definition \ref{09.17.D2} and Lemma \ref{08.04.L10}, we have 
	\begin{equation*}
		\|E_\de^0\Phi_{\de,k}^{(2)}\|_{H_0^1(\La_2;w)}^2=a_\de^{(2)}(\Phi_{\de,k}^{(2)}, \Phi_{\de,k}^{(2)})=\la_{\de,k}^{(2)}\ra \la_k^{(2)} =a^{(2)}(\Phi_k^{(2)},\Phi_k^{(2)})=\|\Phi_k^{(2)}\|_{H_0^1(\La_2;w)}^2. 
	\end{equation*}

	{\it Step 4}. From Steps 2 and 3, together with the standard fact that if $u_n\ra u$ weakly in a Hilbert space $H$ and $\|u_n\|_H\to\|u\|_H$, then $u_n\to u$ strongly in $H$, we conclude the proof.
\end{proof}

\begin{lemma}\label{08.05.L2}
	For each $k\in\mathbb N^*$, we have
	\begin{equation}\label{09.18.11}
		\pt_{x_2}\Phi_{\de,k}^{(2)}(1) \ra \pt_{x_2} \Phi_k^{(2)}(1) \mbox{ as } \de\ra 0^+, 
	\end{equation}
	and 
	\begin{equation}\label{09.18.12}
		\left|\partial_{x_2}\Phi_{\delta,k}^{(2)}(1)\right|^2
		\le C\lambda_{\delta,k}^{(2)} \mbox{ for all } \de\in \left(0,\f{1}{8}\right)\mbox{ and all } k\in\N^*, 
	\end{equation}
	where $C>0$ depends only on $\alpha$.
\end{lemma}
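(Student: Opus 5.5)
We need to prove convergence of the boundary derivatives at $x_2=1$ and a uniform bound. The plan is a one-dimensional multiplier (Rellich-type) identity, localized away from the degenerate endpoint, combined with the $H_0^1$ convergence of Lemma \ref{09.18.L1}.

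\textbf{Uniform bound \eqref{09.18.12}.} Fix a cutoff $\eta\in C^\infty([0,1])$ with $\eta=0$ on $[0,\frac14]$, $\eta=1$ on $[\frac12,1]$ and $0\le\eta\le1$. Since $\delta<\frac18$, $\eta$ vanishes near $\delta$. By elliptic regularity on $\Lambda_2^\delta$, where the coefficient is smooth and bounded below, $\Phi:=\Phi_{\delta,k}^{(2)}$ is smooth up to $x_2=1$ and satisfies $-(x_2^\alpha\Phi')'=\lambda\Phi$ with $\lambda=\lambda_{\delta,k}^{(2)}$ real and $\Phi$ real. Multiply the equation by $\eta\, x_2^\alpha\Phi'$ and integrate over $(\delta,1)$. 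Writing $p=x_2^\alpha\Phi'$, we have $-p'p\,\eta=-\frac12(p^2)'\eta$ and $\lambda\Phi\,\eta x_2^\alpha\Phi'=\frac{\lambda}{2}\eta x_2^\alpha(\Phi^2)'$. Integrating by parts, and using $\eta(\delta)=0$, $\eta(1)=1$, $\Phi(1)=0$, gives
\begin{equation*}
\tfrac12 p(1)^2=\tfrac12\int_\delta^1 \eta' p^2\,dx_2-\tfrac{\lambda}{2}\int_\delta^1(\eta x_2^\alpha)'\Phi^2\,dx_2 .
\end{equation*}
Since $p(1)=\partial_{x_2}\Phi(1)$, this yields
\begin{equation*}
|\partial_{x_2}\Phi_{\delta,k}^{(2)}(1)|^2\le C\Big(\int x_2^{2\alpha}|\Phi'|^2\,dx_2+\lambda\int|\Phi|^2\,dx_2\Big).
\end{equation*}
On the support of $\eta'$ and of $(\eta x_2^\alpha)'$ we have $x_2\ge\frac14$ and $x_2\le1$, so $x_2^{2\alpha}\le x_2^\alpha$ and the constants depend only on $\alpha$. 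The right-hand side is therefore at most $C(\lambda+\lambda)=C\lambda_{\delta,k}^{(2)}$, using $a^{(2)}_\delta(\Phi,\Phi)=\lambda$ and $\|\Phi\|_{L^2}=1$. This proves \eqref{09.18.12}.

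\textbf{Convergence \eqref{09.18.11}.} The same identity holds for $\Phi_k^{(2)}$ on $(0,1)$, because $\eta$ vanishes near $0$. The eigenfunction $\Phi_k^{(2)}$ is smooth on $[\frac14,1]$, so the multiplier computation is legitimate there as well. Both right-hand sides are integrals over $[\frac14,1]$ of the quantities
\begin{equation*}
\eta'\,x_2^{2\alpha}|\partial_{x_2}u|^2\quad\text{and}\quad \lambda\,(\eta x_2^\alpha)'|u|^2 .
\end{equation*}
These are continuous with respect to $H_0^1(\Lambda_2;w)$ convergence, since the weight is bounded above and below on $[\frac14,1]$. By Lemma \ref{09.18.L1}, $E_\delta^0\Phi_{\delta,k}^{(2)}\to\Phi_k^{(2)}$ in $H_0^1(\Lambda_2;w)$, and by Lemma \ref{08.04.L10}, $\lambda_{\delta,k}^{(2)}\to\lambda_k^{(2)}$. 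Hence
\begin{equation*}
|\partial_{x_2}\Phi_{\delta,k}^{(2)}(1)|^2\to|\partial_{x_2}\Phi_k^{(2)}(1)|^2 .
\end{equation*}

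\textbf{The sign issue.} The identity only controls the square of the derivative, so its sign must be handled separately; this is the main obstacle. The first step is to show the limit is nonzero. If $\partial_{x_2}\Phi_k^{(2)}(1)=0$, then together with $\Phi_k^{(2)}(1)=0$, ODE uniqueness at the regular point $x_2=1$ would force $\Phi_k^{(2)}\equiv0$, which is impossible. The second step is to determine the sign. Near $x_2=1$, $\Phi_{\delta,k}^{(2)}(x_2)\approx \partial_{x_2}\Phi_{\delta,k}^{(2)}(1)\,(x_2-1)$, so the sign of the derivative is tied to the sign of $\Phi_{\delta,k}^{(2)}$ close to $1$.

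To pass the sign to the limit, use interior $C^1$ convergence on $[\frac12,1]$. On $[\frac14,1]$ the equation is uniformly elliptic, so the ODE gives
\begin{equation*}
\big(x_2^\alpha\Phi_{\delta,k}^{(2)\prime}\big)'=-\lambda_{\delta,k}^{(2)}\Phi_{\delta,k}^{(2)} ,
\end{equation*}
whose right-hand side is bounded in $L^2(\frac14,1)$. Hence $x_2^\alpha\Phi_{\delta,k}^{(2)\prime}$ is bounded in $H^1(\frac14,1)$, and therefore $\Phi_{\delta,k}^{(2)}$ is bounded in $H^2(\frac14,1)$. Combined with the $H_0^1$ convergence of Lemma \ref{09.18.L1}, this gives convergence in $H^2(\frac14,1)$, hence in $C^1([\frac14,1])$. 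Under the sign convention fixed in Lemma \ref{09.18.L1}, this yields \eqref{09.18.11} directly, and it bypasses the need to track the sign separately.
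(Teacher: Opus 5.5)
Your proposal is correct and follows essentially the paper's route. For \eqref{09.18.12} you use a localized multiplier identity near $x_2=1$ (your flux multiplier $\eta\, x_2^\alpha\partial_{x_2}\Phi$ in place of the paper's $\zeta\,\partial_{x_2}\Phi$), and for \eqref{09.18.11} you use convergence in $H^2(\frac14,1)$, hence in $C^1([\frac14,1])$, obtained from the equation together with Lemmas \ref{08.04.L10} and \ref{09.18.L1}. Two small remarks: the $\lambda$-term in your identity should carry a plus sign, $\frac12 p(1)^2=\frac12\int_\delta^1\eta' p^2\,dx_2+\frac{\lambda}{2}\int_\delta^1(\eta x_2^\alpha)'\Phi^2\,dx_2$, which is harmless since you take absolute values; and the detour through $|\partial_{x_2}\Phi(1)|^2$ and the sign discussion is unnecessary, because the $C^1([\frac14,1])$ convergence gives \eqref{09.18.11} directly.
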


\begin{proof}
	We prove this lemma in three steps.
	
	{\it Step 1}. We first show that $
	\Phi_{\delta,k}^{(2)}\in H^2(\frac14,1)$ 
	for all $\delta\in[0,\frac18)$ and all $k\in\mathbb N^*$.
	
	Since
	\begin{equation*}
		-x_2^\alpha\partial_{x_2x_2}\Phi_{\delta,k}^{(2)}
		-\alpha x_2^{\alpha-1}\partial_{x_2}\Phi_{\delta,k}^{(2)}
		=\mathcal A_\delta^{(2)}\Phi_{\delta,k}^{(2)}
		=\lambda_{\delta,k}^{(2)}\Phi_{\delta,k}^{(2)}
	\end{equation*}
	on $(\frac14,1)$, we have
	\begin{equation*}
		\partial_{x_2x_2}\Phi_{\delta,k}^{(2)}
		=
		-x_2^{-\alpha}\lambda_{\delta,k}^{(2)}
		\Phi_{\delta,k}^{(2)}
		-\alpha x_2^{-1}\partial_{x_2}\Phi_{\delta,k}^{(2)}.
	\end{equation*}
	Therefore, by \eqref{09.18.3} and Lemma \ref{09.18.L1},
	\begin{equation*}
		\begin{aligned}
			\|\partial_{x_2x_2}\Phi_{\delta,k}^{(2)}\|_{L^2(\frac14,1)}
			&\leq
			4^\alpha\lambda_{\delta,k}^{(2)}
			\|\Phi_{\delta,k}^{(2)}\|_{L^2(\Lambda_2)}
			+\alpha 4^{1+\frac\alpha2}
			\|x_2^{\frac\alpha2}
			\partial_{x_2}\Phi_{\delta,k}^{(2)}\|_{L^2(\Lambda_2)}
			\\
			&\leq
			4^\alpha\lambda_{\frac18,k}^{(2)}
			+\alpha 4^{1+\frac\alpha2}
			\bigl(\lambda_{\frac18,k}^{(2)}\bigr)^{\f{1}{2}} \leq C\lambda_{\f{1}{8},k}^{(2)},
		\end{aligned}
	\end{equation*}
	where $C>0$ depends only on $\alpha$. Here we have used the monotonicity $
	\lambda_{\delta,k}^{(2)}
	\leq\lambda_{\frac18,k}^{(2)}$   (see Lemma \ref{08.04.L10}) 
	and the uniform lower bound for $\lambda_{\delta,k}^{(2)}$. 
	Hence $
	\partial_{x_2x_2}\Phi_{\delta,k}^{(2)}
	\in L^2(\frac14,1)$, 
	and consequently $
	\Phi_{\delta,k}^{(2)}
	\in H^2(\frac14,1)$ 
	for all $\delta\in[0,\frac18)$ and all $k\in\mathbb N^*$.
	
	{\it Step 2}. We next prove \eqref{09.18.11}.
	
	By Lemmas \ref{08.04.L10} and \ref{09.18.L1},
	$$
	\lambda_{\delta,k}^{(2)}
	\to\lambda_k^{(2)}, \mbox{ and } 
	E_\delta^0\Phi_{\delta,k}^{(2)}
	\to\Phi_k^{(2)} \mbox{ strongly in }H_0^1(\Lambda_2;w).
	$$
	Using the equation satisfied by the eigenfunctions, we obtain
	\begin{equation*}
		\begin{aligned}
			&\|\partial_{x_2x_2}\Phi_{\delta,k}^{(2)}
			-\partial_{x_2x_2}\Phi_k^{(2)}
			\|_{L^2(\frac14,1)}
			\\
			&\leq
			4^\alpha
			\left|
			\lambda_{\delta,k}^{(2)}-\lambda_k^{(2)}
			\right|
			\|\Phi_{\delta,k}^{(2)}\|_{L^2(\Lambda_2)}  
			+4^\alpha\lambda_k^{(2)}
			\|\Phi_{\delta,k}^{(2)}-\Phi_k^{(2)}
			\|_{L^2(\frac14,1)}
			\\
			&\hspace{4.5mm}
			+\alpha 4^{1+\frac\alpha2}
			\|x_2^{\frac\alpha2}
			(\partial_{x_2}\Phi_{\delta,k}^{(2)}
			-\partial_{x_2}\Phi_k^{(2)})
			\|_{L^2(\frac14,1)}
			\\
			&\leq
			C\left|
			\lambda_{\delta,k}^{(2)}-\lambda_k^{(2)}
			\right|
			+C\lambda_k^{(2)}
			\|E_\delta^0\Phi_{\delta,k}^{(2)}
			-\Phi_k^{(2)}\|_{L^2(\Lambda_2)} 
			+C\|E_\delta^0\Phi_{\delta,k}^{(2)}
			-\Phi_k^{(2)}\|_{H_0^1(\Lambda_2;w))}
			\ra 0
		\end{aligned}
		\end{equation*}
		as $\delta\to0^+$. 
		Consequently,
		$$
		\Phi_{\delta,k}^{(2)}
		\to\Phi_k^{(2)} \mbox{ strongly in }H^2\left(\frac14,1\right).
		$$
		By the trace theorem,
		$$
		\partial_{x_2}\Phi_{\delta,k}^{(2)}(1)
		\to
		\partial_{x_2}\Phi_k^{(2)}(1) \mbox{ as } \de\ra 0^+, 
		$$
		which proves \eqref{09.18.11}.
		
		{\it Step 3}. We finally prove \eqref{09.18.12}.
		Since $A_\delta^{(2)}$ is a Sturm--Liouville operator, its eigenfunctions
$\Phi_{\delta,k}^{(2)}$ can be chosen to be real-valued.
		
		Let $\zeta\in C^\infty(\mathbb R)$, $0\leq\zeta\leq1$, satisfy
		\begin{equation*}
			\zeta=0 \mbox{ on }\left(-\infty,\frac14\right),
			\quad
			\zeta=1 \mbox{ on }\left(\frac12,+\infty\right),
			\quad
			|\zeta'|\leq8 \mbox{ on }\mathbb R.
		\end{equation*}
		Multiplying $
		\mathcal A_\delta^{(2)}
		\Phi_{\delta,k}^{(2)}
		=\lambda_{\delta,k}^{(2)}
		\Phi_{\delta,k}^{(2)}$ 
		by $\zeta\partial_{x_2}\Phi_{\delta,k}^{(2)}$ and integrating over $\Lambda_2^\delta$, we obtain
		\begin{equation*}
			\begin{aligned}
				\frac12
				\left(\partial_{x_2}\Phi_{\delta,k}^{(2)}(1)\right)^2
				&=
				\int_{\Lambda_2^\delta}
				x_2^\alpha\zeta'
				\left(\partial_{x_2}\Phi_{\delta,k}^{(2)}\right)^2\,dx_2 
				-\frac12
				\int_{\Lambda_2^\delta}
				\left(\partial_{x_2}\Phi_{\delta,k}^{(2)}\right)^2
				\partial_{x_2}(x_2^\alpha\zeta)\,dx_2 
				\\
				&\hspace{4.5mm}+\frac12\lambda_{\delta,k}^{(2)}
				\int_{\Lambda_2^\delta}
				\zeta'
				\left(\Phi_{\delta,k}^{(2)}\right)^2\,dx_2.
			\end{aligned}
		\end{equation*}
		Since $\zeta'$ is supported in $(\frac14,\frac12)$ and $
		\left|\partial_{x_2}(x_2^\alpha\zeta)\right|
		\leq C  x_2^\alpha \mbox{ on }(\tfrac14,1)$, 
		it follows that
		\begin{equation*}
			\begin{aligned}
				\frac12
				\left|\partial_{x_2}\Phi_{\delta,k}^{(2)}(1)\right|^2
				&\leq
				C\|\Phi_{\delta,k}^{(2)}
				\|_{H_0^1(\Lambda_2;w)}^2
				+C\lambda_{\delta,k}^{(2)} \leq C\lambda_{\delta,k}^{(2)}.
			\end{aligned}
		\end{equation*}
		Here, $C>0$ depends only on $\alpha$. This proves \eqref{09.18.12}, and completes the proof. 
	\end{proof}

\subsection{Approximation}

In this subsection, we regard the case $\delta=0$ as the original
degenerate problem \eqref{09.15.1}, while the cases
$\delta\in(0,\frac18)$ correspond to the uniformly elliptic shape
design problems \eqref{09.16.27}. Although the operators
$\mathcal A_\delta$ share the same differential expression, their
analytic properties are essentially different. Indeed, $\mathcal
A_0=\mcA $ is degenerate on the boundary $\{x_2=0\}$, whereas
$\mathcal A_\delta$ is uniformly elliptic for every $\delta>0$.
Consequently, the domains of these operators and the associated
regularity properties are different. 

Let $\vp^0\in C_0^\iy(\Om)$. Denote 
\begin{equation}\label{09.17.6}
	\de_0=\min\left\{\f{1}{8}, \f{1}{2}\dist(\supp \vp^0, \pt\Om) \right\}. 
\end{equation}
Then for each $\de\in (0,\de_0)$, $\vp^0\in C_0^\iy(\Om_\de)$. Note that $C_0^\iy(\Om)\s D(\mcA^m)$ for all $m\in \N$, then 
\begin{equation}\label{09.17.5}
	\begin{split}
		\vp^0=\sum_{n,k=1}^\iy \vp_{\de,nk}^0 \Phi_n^{(1)}\Phi_{\de,k}^{(2)}, \mbox{ with } \vp_{\de,nk}^0=(\vp^0, \Phi_n^{(1)}\Phi_{\de,k}^{(2)})_{L^2(\Om_\de)} \mbox{ for all } n,k\in\N^*. 
	\end{split}
\end{equation}
Hence, the solution of the Schr\"odinger equations \eqref{09.16.27} with respect to $(\vp_\de^0, 0)$ has the form 
\begin{equation}\label{09.16.29}
	\begin{split}
		\vp_\de(t)=\sum_{n,k=1}^\iy e^{\im \la_{nk}^\de t}\vp_{\de,nk}^0\Phi_n^{(1)}\Phi_{\de,k}^{(2)}, \mbox{ for all } \de\in [0,\de_0). 
	\end{split}
\end{equation}

For $\delta=0$, the above representation reduces to
$$
\vp(t)
=
\sum_{n,k=1}^{\infty}
e^{\im\lambda_{nk}t}
\vp_{nk}^{0}
\Phi_n^{(1)}\Phi_k^{(2)},
$$
because $
\lambda_{nk}^{0}=\lambda_{nk}, 
\vp_{0,nk}^{0}=\vp_{nk}^{0}$, and $\Phi_{0,k}^{(2)}=\Phi_k^{(2)}$. 

\begin{remark}\label{09.17.R2}
	Let $\delta\in[0,\delta_0)$. Define
	$$
	M_\delta
	:=
	\sum_{j=0}^4\sum_{n,k=1}^{\infty}
	(\lambda_{nk}^{\delta})^j
	|\vp_{\delta,nk}^0|^2
	=\sum_{j=0}^4
	\|\mathcal A_\delta^\f{j}{2}\vp^0\|_{L^2(\Omega_\delta)}^2 .
	$$
	Since $
	\operatorname{dist}
	(\operatorname{supp}\vp^0,\partial\Omega)>0$, 
	the support of $\vp^0$ is away from the degenerate boundary.
	Hence, on $\operatorname{supp}\vp^0$, the operators
	$\mathcal A_\delta$ are uniformly elliptic for
	$\delta\in[0,\delta_0)$, and all derivatives of the coefficients
	are uniformly bounded. Therefore,
	$$
	M_\delta
	=
	\sum_{j=0}^4\int_{\Omega_\delta}
	|\mathcal A_\delta^\f{j}{2}\vp^0|^2\,dx
	\leq
	C
	\sum_{j=0}^{4}
	\|D^j\vp^0\|_{L^\iy(\Omega)}^2 ,
	$$
	where $C$ depends only on $\alpha$ and $\delta_0$.
	Consequently,
	$$
	M:=\sup_{\delta\in[0,\delta_0)}M_\delta+1<+\infty .
	$$
\end{remark}

\begin{lemma}\label{09.19.L1}
	Let $\vp^0\in C_0^\iy(\Om)$ and   $\vp_\de\ (0\leq \de<\de_0)$ be the solution  defined in \eqref{09.16.11} and \eqref{09.16.28}, respectively. Then 
	\begin{equation*}
		\vp_\de \in C([0,T]; D(\mcA_\de)), \mbox{ and } \pt_t\vp_\de\in C([0,T];L^2(\Om_\de)). 
	\end{equation*}
\end{lemma}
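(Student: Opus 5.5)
The plan is to work entirely with the spectral representation \eqref{09.16.29}. The key finiteness input is Remark~\ref{09.17.R2}: $\sum_{n,k}(\lambda_{nk}^\delta)^j|\vp_{\delta,nk}^0|^2\le M$ for $j\le 4$. Fix $\delta\in[0,\delta_0)$. Since $\{\Phi_n^{(1)}\Phi_{\delta,k}^{(2)}\}$ is an orthonormal basis of $L^2(\Omega_\delta)$ (Remark~\ref{08.04.R4}, and Remark~\ref{08.04.R3} when $\delta=0$), we measure $D(\mathcal A_\delta)$ through the spectral norm $\|u\|_{D(\mathcal A_\delta)}^2=\sum_{n,k}(1+(\lambda^\delta_{nk})^2)|u_{nk}|^2$. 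By Definition~\ref{09.15.D1}, Lemma~\ref{09.15.L3} and Remark~\ref{09.17.R1}, this norm is equivalent to the graph norm on $H_0^1\cap H^2(\cdot;w_\delta)$.

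\textbf{Step 1 (partial sums).} For $N\in\mathbb N^*$ set
\[
S_N(t)=\sum_{n,k\le N}e^{\mathrm i\lambda^\delta_{nk}t}\vp^0_{\delta,nk}\Phi_n^{(1)}\Phi^{(2)}_{\delta,k}.
\]
Each $S_N$ is a finite combination of eigenfunctions, which lie in $D(\mathcal A_\delta^\theta)$ for every $\theta\ge0$ by Remark~\ref{09.15.R2}. Its coefficients are smooth in $t$. Hence $S_N\in C([0,T];D(\mathcal A_\delta))$ and $\partial_tS_N\in C([0,T];L^2(\Omega_\delta))$.

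\textbf{Step 2 (uniform Cauchy estimates).} Because $|e^{\mathrm i\lambda t}|=1$, Parseval gives, for $N<N'$,
\[
\sup_{t\in[0,T]}\|S_{N'}(t)-S_N(t)\|^2_{D(\mathcal A_\delta)}\le\sum_{(n,k)\in I_{N,N'}}\bigl(1+(\lambda^\delta_{nk})^2\bigr)|\vp^0_{\delta,nk}|^2 ,
\]
where $I_{N,N'}$ is the set of indices with $\max(n,k)>N$ and $n,k\le N'$. The right-hand side is the tail of a series bounded by $M$, so it tends to $0$.

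For the time derivative, $\partial_tS_N=\sum\mathrm i\lambda^\delta_{nk}e^{\mathrm i\lambda^\delta_{nk}t}\vp^0_{\delta,nk}\Phi_n^{(1)}\Phi_{\delta,k}^{(2)}$. Its Cauchy differences in $\sup_t\|\cdot\|_{L^2}$ are controlled by the tail of $\sum(\lambda^\delta_{nk})^2|\vp^0_{\delta,nk}|^2\le M$.

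\textbf{Step 3 (passing to the limit).} By completeness of $C([0,T];D(\mathcal A_\delta))$ and $C([0,T];L^2(\Omega_\delta))$, the partial sums converge uniformly to limits $\psi$ and $\chi$. Since $D(\mathcal A_\delta)\hookrightarrow L^2$, the limit $\psi$ coincides with $\vp_\delta$ given by \eqref{09.16.29}, so $\vp_\delta\in C([0,T];D(\mathcal A_\delta))$.

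To identify $\chi=\partial_t\vp_\delta$, write $S_N(t)=S_N(0)+\int_0^t\partial_tS_N\,ds$ in $L^2$ and pass to the limit uniformly. This gives $\vp_\delta(t)=\vp^0+\int_0^t\chi\,ds$, hence $\partial_t\vp_\delta=\chi\in C([0,T];L^2(\Omega_\delta))$. By Theorems~\ref{09.16.T1} and~\ref{09.17.T1}, this series is the unique weak solution.

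\textbf{Main obstacle.} The only delicate point is the case $\delta=0$: one must justify that the spectral space $D(\mathcal A^1)$ coincides with $D(\mathcal A)=H_0^1(\Omega;w)\cap H^2(\Omega;w)$, which is stated just before Lemma~\ref{09.15.L3}. I would verify it by noting that $\mathcal A$ is the self-adjoint operator associated with the closed form $a$, as in the proof of Lemma~\ref{09.15.L3}. Then $u\in D(\mathcal A)$ if and only if $\sum\lambda_{nk}^2|u_{nk}|^2<\infty$, with $\mathcal Au=\sum\lambda_{nk}u_{nk}\Phi_{nk}$. Finiteness of $M_0$ rests on $\vp^0\in C_0^\infty(\Omega)\subseteq D(\mathcal A^2)$, as stated in Remark~\ref{09.17.R2}.
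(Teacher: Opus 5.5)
Your proof is correct and follows essentially the same route as the paper: expand in the eigenbasis and control the tails uniformly in $t$. You package this as uniform convergence of the partial sums in the Banach spaces $C([0,T];D(\mathcal A_\delta))$ and $C([0,T];L^2(\Omega_\delta))$, whereas the paper splits explicitly into a low-frequency part, Lipschitz in $t$, and an $\varepsilon$-small high-frequency tail. Your version needs only $\sum_{n,k}(\lambda^\delta_{nk})^2|\vp^0_{\delta,nk}|^2<\infty$ rather than the fourth-moment bound $M$, and it explicitly identifies the termwise-differentiated series with $\partial_t\vp_\delta$ through the integral identity, a step the paper leaves implicit.
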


\begin{proof}
	
	Since $\vp^0\in C_0^\infty(\Omega_\de)$ and 
	$C_0^\infty(\Omega_\de)\s  D(\mathcal A_\de^\theta)$
	for all $\theta\geq0$, we have
	$\vp_\de^0\in D(\mathcal A_\de)$. 
	We prove this lemma by the following steps.  
	
	{\it Step 1}.  We prove $\vp_\de\in L^2(0,T; D(\mcA_\de))$.

	From the spectral expansion \eqref{09.16.11} and \eqref{09.16.28}, 
	we obtain
	$$
	\begin{aligned}
		\|\mathcal A_\de\vp_\de(t)\|_{L^2(\Omega_\de)}^2
		&=
		\left\|
		\sum_{n,k=1}^\iy
		e^{\im\lambda_{nk}^\de t}
		\lambda_{nk}^\de \vp_{\de, nk}^0
		\Phi_n^{(1)}\Phi_{\de,k}^{(2)} 
		\right\|_{L^2(\Omega_\de)}^2=
		\sum_{n,k=1}^\iy
		(\lambda_{nk}^\de)^2|\vp_{\de, nk}^0|^2
		 \\
		&=
		\|\mathcal A_\de\vp^0\|_{L^2(\Omega)}^2\leq\la_1^{-2}\sum_{n,k=1}^\iy (\la_{nk}^\de)^4|\vp_{\de,nk}^0|^2 = \la_1^{-2} \|\mcA_\de^2\vp^0\|_{L^2(\Om_\de)}^2.
	\end{aligned}
	$$  
	This shows that $\mcA_\de\vp_\de\in L^2(Q_\de)$.

	{\it Step 2}. We prove $\vp_\de\in C([0,T]; D(\mcA_\de))$. 
	
	Let $\e\in (0,1)$ be arbitrary. Then there exists $m>0$ such that for all $n,k\geq m$ we have 
	\begin{equation*}
		\la_n^{(1)}\geq \max\{1, M^\f{1}{2}\e^{-1}\}, \quad \la_k^{(2)}\geq \max\{1, M^\f{1}{2}\e^{-1}\},  
	\end{equation*}
	where the positive constant $M$ is defined in Remark \ref{09.17.R2}. 
	From Lemma \ref{08.04.L10} we obtain 
	\begin{equation*}
		\la_{\de,k}^{(2)}\geq \la_k^{(2)}\geq  \max\{1,M\e^{-2}\}, \mbox{ for all } \de\in \left(0,\f{1}{8}\right) \mbox{ and all } k\in\N^*. 
	\end{equation*} 
	
	Now, denote 
	\begin{equation*}
		A_m=\{(n,k)\in\N^2\colon n\geq m+1, \mbox{ or } k\geq m+1\}. 
	\end{equation*}
	
	For the low-frequency part, by the orthonormality of 
	$\{\Phi_n^{(1)}\Phi_{\delta,k}^{(2)}\}_{n,k\in\mathbb N^*}$ in 
	$L^2(\Omega_\delta)$, we have
	\begin{equation*}
		\begin{split}
			&\left\|
			\sum_{n,k\leq m}
			\left(
			e^{\im\lambda_{nk}^{\delta}t_1}
			-
			e^{\im\lambda_{nk}^{\delta}t_2}
			\right)
			\lambda_{nk}^{\delta}
			\varphi_{\delta,nk}^{0}
			\Phi_n^{(1)}
			\Phi_{\delta,k}^{(2)}
			\right\|_{L^2(\Omega_\delta)}^2 =
			\sum_{n,k\leq m}
			\left|
			e^{\im\lambda_{nk}^{\delta}t_1}
			-
			e^{\im\lambda_{nk}^{\delta}t_2}
			\right|^2
			(\lambda_{nk}^{\delta})^2
			|\varphi_{\delta,nk}^{0}|^2 .
		\end{split}
	\end{equation*}
	Moreover, since $
		|
		e^{\im\lambda_{nk}^{\delta}t_1}
		-
		e^{\im\lambda_{nk}^{\delta}t_2}
		|
		\leq
		\lambda_{nk}^{\delta}|t_1-t_2|$, 
	we obtain
	\begin{equation*}
		\begin{split}
			&\left\|
			\sum_{n,k\leq m}
			\left(
			e^{\im\lambda_{nk}^{\delta}t_1}
			-
			e^{\im\lambda_{nk}^{\delta}t_2}
			\right)
			\lambda_{nk}^{\delta}
			\varphi_{\delta,nk}^{0}
			\Phi_n^{(1)}
			\Phi_{\delta,k}^{(2)}
			\right\|_{L^2(\Omega_\delta)}^2 \leq
			|t_1-t_2|^2
			\sum_{n,k\leq m}
			(\lambda_{nk}^{\delta})^4
			|\varphi_{\delta,nk}^{0}|^2 \leq
			M|t_1-t_2|^2 ,
		\end{split}
	\end{equation*}
	where we have used Remark \ref{09.17.R2}. Hence,
	\begin{equation*}
		\left\|
		\sum_{n,k\leq m}
		\left(
		e^{\im\lambda_{nk}^{\delta}t_1}
		-
		e^{\im\lambda_{nk}^{\delta}t_2}
		\right)
		\lambda_{nk}^{\delta}
		\varphi_{\delta,nk}^{0}
		\Phi_n^{(1)}
		\Phi_{\delta,k}^{(2)}
		\right\|_{L^2(\Omega_\delta)}
		\leq
		M^{\frac12}|t_1-t_2|.
	\end{equation*}
	
	The high-frequency part is estimated as
	\begin{equation*} 
	\begin{aligned}
		\left\|
		\sum_{(n,k)\in A_m}
		e^{\im\lambda_{nk}^{\delta}t}
		\lambda_{nk}^{\delta}
		\varphi_{\delta,nk}^0
		\Phi_n^{(1)}\Phi_{\delta,k}^{(2)}
		\right\|_{L^2(\Omega_\delta)}^2 
		&=
		\sum_{(n,k)\in A_m}
		(\lambda_{nk}^{\delta})^2
		|\varphi_{\delta,nk}^0|^2 \\
		&\leq
		\frac{\e^2}{M}
		\sum_{n,k}
		(\lambda_{nk}^{\delta})^4
		|\varphi_{\delta,nk}^0|^2
		\leq C\e^2, 
	\end{aligned}
	\end{equation*} 
	then,   
	\begin{equation*} 
		\begin{split}
			&\|\mcA_\de\vp_\de(t_1)-\mcA_\de\vp_\de(t_2)\|_{L^2(\Om_\de)}\\
			&\leq \left\|\sum_{n,k\leq m}\left(e^{\im \la_{nk}^\de t_1}-e^{\im \la_{nk}^\de t_2}\right)\la_{nk}^\de \vp_{\de,nk}^0 \Phi_n^{(1)}\Phi_{\de,k}^{(2)}\right\|_{L^2(\Om_\de)}\\
			&\hspace{4.5mm}+\left\|\sum_{n,k\in A_m} e^{\im \la_{nk}^\de t_1 }\la_{nk}^\de \vp_{\de,nk}^0 \Phi_n^{(1)}\Phi_{\de,k}^{(2)}\right\|_{L^2(\Om_\de)}+\left\|\sum_{n,k\in A_m} e^{\im \la_{nk}^\de t_2} \la_{nk}^\de \vp_{\de,nk}^0 \Phi_n^{(1)}\Phi_{\de,k}^{(2)}\right\|_{L^2(\Om_\de)}\\
			&\leq C|t_1-t_2|+C\e\leq C\e 
		\end{split}
	\end{equation*}
	for $|t_1-t_2|$ small enough, 
	where the positive constant $C$ depends only on $\al$ and $m$ and $M$.  This shows that  
	$$
	\vp_\de\in C([0,T];D(\mathcal A_\de)).
	$$
	
	{\it Step 3}. We prove $\pt_t\vp_\de\in C([0,T];L^2(\Om_\de))$. 
	
	Since $
	\partial_t\vp_\de 
	=
	\sum_{n,k=1}^\iy 
	\im\lambda_{nk}^\de 
	e^{\im\lambda_{nk}^\de t}
	\vp_{\de, nk}^0
	\Phi_n^{(1)}\Phi_{\de,k}^{(2)}$, 
	and therefore
	$$
	\|\partial_t\vp_\de (t)\|_{L^2(\Omega_\de )}^2
	=
	\sum_{n,k=1}^\iy
	(\lambda_{nk}^\de)^2|\vp_{\de, nk}^0|^2
	=
	\|\mathcal A_\de \vp^0\|_{L^2(\Omega_\de)}^2 \leq \la_1^{-2}\|\mcA_\de^2\vp^0\|_{L^2(\Om_\de)}^2.
	$$
	Consequently, by the same argument as Step 2 we obtain 
	$$
	\partial_t\vp_\de\in C([0,T];L^2(\Omega_\de)).
	$$
	This proves the lemma.
\end{proof}

\begin{lemma}\label{09.19.L2}
	Let $\vp^0\in C_0^\iy(\Om)$ and $\vp_\de\ (0\leq \de<\de_0)$ be the solution defined in \eqref{09.16.11} and \eqref{09.16.28}, respectively. Then 
	\begin{equation*}
		\f{\pt\vp_\de}{\pt \nu_{A_\de}}\in C([0,T]; L^2(\pt\Om-\ol{\Ga_{\de,2}^\de})). 
	\end{equation*}
\end{lemma}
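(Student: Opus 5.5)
Our plan is to exploit the separated-variables expansion \eqref{09.16.28} and compute the conormal derivative on each flat piece of $\pt\Om-\ol{\Ga_{\de,2}^\de}$, namely on $\Ga_1^{\pm1}$ (restricted to $x_2\in(\de,1)$, where $\vp_\de$ vanishes below $x_2=\de$ by the zero extension) and on $\Ga_2^1$. On $\Ga_1^{\pm1}$ the conormal derivative is $\pm\pt_{x_1}\vp_\de$, and on $\Ga_2^1$ it equals $x_2^\al\pt_{x_2}\vp_\de=\pt_{x_2}\vp_\de$ since $x_2=1$. Termwise,
\begin{equation*}
\pt_{x_1}\vp_\de(t)\big|_{x_1=\pm1}=\sum_{n,k=1}^\iy e^{\im\la_{nk}^\de t}\vp_{\de,nk}^0\,(\Phi_n^{(1)})'(\pm1)\,\Phi_{\de,k}^{(2)}(x_2),
\end{equation*}
\begin{equation*}
\pt_{x_2}\vp_\de(t)\big|_{x_2=1}=\sum_{n,k=1}^\iy e^{\im\la_{nk}^\de t}\vp_{\de,nk}^0\,\Phi_n^{(1)}(x_1)\,\pt_{x_2}\Phi_{\de,k}^{(2)}(1).
\end{equation*}
The approach is to show that these series converge in $L^2(\La_2^\de)$, respectively $L^2(\La_1)$, uniformly in $t\in[0,T]$. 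Since each partial sum is continuous in $t$, uniform convergence gives continuity.

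For the lateral pieces, Remark \ref{08.05.R1} gives $|(\Phi_n^{(1)})'(\pm1)|^2=\la_n^{(1)}$. We regroup the series as $\sum_k\big(\sum_n c_{nk}(t)\big)\Phi_{\de,k}^{(2)}$ and use orthonormality of $\{\Phi_{\de,k}^{(2)}\}$ in $L^2(\La_2^\de)$ together with Cauchy--Schwarz in $n$:
\begin{equation*}
\Big|\sum_{n} e^{\im\la_{nk}^\de t}\vp_{\de,nk}^0(\Phi_n^{(1)})'(\pm1)\Big|^2\le\Big(\sum_n (\la_n^{(1)})^{-1}\Big)\sum_n(\la_n^{(1)})^2|\vp_{\de,nk}^0|^2 .
\end{equation*}
The first factor is finite because $\la_n^{(1)}\sim n^2$. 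The tails in $(n,k)$ are then controlled by $\sum_{n,k}(\la^\de_{nk})^2|\vp^0_{\de,nk}|^2\le M$ (Remark \ref{09.17.R2}), independently of $t$. This gives the Cauchy property of the partial sums uniformly in $t$.

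For the top piece $\Ga_2^1$, we argue symmetrically. Orthonormality of $\{\Phi_n^{(1)}\}$ in $L^2(\La_1)$ is combined with \eqref{09.18.12}, $|\pt_{x_2}\Phi_{\de,k}^{(2)}(1)|^2\le C\la_{\de,k}^{(2)}$, and Cauchy--Schwarz in $k$:
\begin{equation*}
\Big|\sum_k e^{\im\la^\de_{nk}t}\vp^0_{\de,nk}\pt_{x_2}\Phi^{(2)}_{\de,k}(1)\Big|^2\le C\Big(\sum_k(\la^{(2)}_{\de,k})^{-1}\Big)\sum_k(\la^{(2)}_{\de,k})^2|\vp^0_{\de,nk}|^2 .
\end{equation*}
By monotonicity (Lemma \ref{08.04.L10}), $\la^{(2)}_{\de,k}\ge\la^{(2)}_k$. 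By Remark \ref{08.05.R1}, $\la_k^{(2)}\sim k^2$, so $\sum_k(\la_k^{(2)})^{-1}<\iy$ uniformly in $\de$. We again bound the tails by $M$.

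The main obstacle is the summability step on $\Ga_2^1$, because a trace of $\pt_{x_2}$ costs half a derivative beyond what the energy controls. That is why we use the extra regularity $M_\de<\iy$ (up to $\mcA_\de^2$) from Remark \ref{09.17.R2}; we must check that the weight $(\la_{\de,k}^{(2)})^2$ is dominated by $(\la^\de_{nk})^2$, which holds trivially. For $\de=0$ the same argument applies with \eqref{09.18.12} replaced by its $\de=0$ analogue. That analogue follows either from the limit \eqref{09.18.11} or by repeating Step 3 of Lemma \ref{08.05.L2} verbatim on $\La_2$, since the cutoff $\zeta$ is supported away from $x_2=0$.
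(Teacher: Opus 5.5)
Your proposal is correct and follows essentially the same route as the paper. Both arguments take termwise traces of the separated-variables expansion on each flat piece, then apply orthonormality in the transverse variable and a weighted Cauchy--Schwarz inequality (using $|(\Phi_n^{(1)})'(\pm1)|^2=\la_n^{(1)}$ and \eqref{09.18.12}). Both bound the result by $\sum_{n,k}(\la^\de_{nk})^2|\vp^0_{\de,nk}|^2\le M$, and both get continuity in $t$ from uniform-in-$t$ control of the high-frequency tails; your uniform convergence of partial sums is the same as the paper's low/high frequency splitting from Step 2 of Lemma \ref{09.19.L1}.

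Two of your details are actually more careful than the paper. On $\Ga_2^1$ your nesting $\sum_n|\sum_k\cdot|^2$, with Cauchy--Schwarz in $k$ and $\sum_k(\la_k^{(2)})^{-1}<\iy$, is the correct one; the paper's display nests the sums the other way. Your explicit $\de=0$ version of \eqref{09.18.12} also fills a point the paper passes over silently.
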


\begin{proof}
	We prove this lemma by the following steps. 
	
	{\it Step 1}. We estimate on $\Ga_{\de,1}^{-1}$. 
	
	Indeed, from \eqref{09.16.11} and  \eqref{09.16.28} and $\f{\pt \vp_\de}{\pt \nu_{A_\de}}=\nabla\vp_\de\cdot A_\de\nu=-\f{\pt\vp_\de}{\pt x_1}$ on $\Ga_{\de,1}^{-1}$, and \begin{equation*}
		\pt_{x_1}\Phi_n^{(1)}=\f{n\pi}{2}\cos\left(\f{n\pi(x_1+1)}{2}\right)=\sqrt{\la_n^{(1)}}\cos\left(\f{n\pi(x_1+1)}{2}\right)
	\end{equation*} 
	by Remark \ref{08.05.R1},  and $\la_{nk}^\de=\la_n^{(1)}+\la_k^{(2)}\geq \la_n^{(1)}$, we obtain 
	\begin{equation*}
		\begin{split}
			&\left\|\f{\pt\vp_\de}{\pt \nu_{A_\de}}(t)\right\|_{L^2(\Ga_{\de,1}^{-1})}^2\\
			&=\left\|\sum_{n,k=1}^\iy e^{\im \la_{nk}^\de t} \vp_{\de,nk}^0 \left[\pt_{x_1}\Phi_n^{(1)}(-1)\right] \Phi_{\de,k}^{(2)}\right\|_{L^2(\Ga_{\de,1}^{-1})}^2	=\left\|\sum_{n,k=1}^\iy e^{\im \la_{nk}^\de t} \vp_{\de,nk}^0 \sqrt{\la_n^{(1)}} \Phi_{\de,k}^{(2)}\right\|_{L^2(\Ga_{\de,1}^{-1})}^2\\
			&= \sum_{k=1}^\iy\left| \sum_{n=1}^\iy e^{\im \la_{nk}^\de t} \vp_{\de,nk}^0 \sqrt{\la_n^{(1)}}\right|^2=\sum_{k=1}^\iy \left|\sum_{n=1}^\iy e^{\im\la_{nk}^\de t} \vp_{\de,nk}^0  \la_{nk}^\de(\la_{nk}^\de)^{-1} \sqrt{\la_n^{(1)}}\right|^2\\
			&\leq \sum_{k=1}^\iy \left(\sum_{n=1}^\iy (\la_{nk}^\de)^2|\vp_{\de,nk}^0|^2\right)\left(\sum_{n=1}^\iy \f{\la_n^{(1)}}{(\la_{nk}^\de)^2}\right)\leq \left(\sum_{n=1}^\iy \f{1}{\la_n^{(1)}}\right)\sum_{n,k=1}^\iy (\la_{nk}^\de)^2|\vp_{\de,nk}^0|^2\\
			&\leq \la_1^{-2} C\|\mcA_\de\vp^0\|_{L^2(\Om_\de)}^2, 
		\end{split}
	\end{equation*}
	where the positive constant $C$ depends only on $\al$.  
	
	Now, by the same argument as Step 2 in the proof of Lemma \ref{09.19.L1}, we obtain $\f{\pt\vp_\de}{\pt\nu_{A_\de}}\in C([0,T]; L^2(\Ga_{\de,1}^{-1}))$. 
	
	{\it Step 2}. We estimate on $\Gamma_{\delta,1}^{1}$.
	
	The estimate on $\Gamma_{\delta,1}^{1}$ follows from the same
	calculation as that on $\Gamma_{\delta,1}^{-1}$.
	
	{\it Step 3}. We estimate on $\Gamma_{\delta,2}^{1}$.
	
	Indeed, from \eqref{09.16.11} and \eqref{09.16.28}, and $
	\frac{\partial\varphi_\delta}{\partial\nu_{A_\delta}}
	=
	\nabla\varphi_\delta\cdot A_\delta\nu
	=
	\frac{\partial\varphi_\delta}{\partial x_2}$ on $\Gamma_{\delta,2}^{1}$,  and Remark \ref{08.05.R1} and $\la_{\de, nk}^\de=\la_n^{(1)}+\la_{\de,k}^{(2)}\geq \la_{\de,k}^{(2)}\geq \la_k^{(2)}$ (see Lemma \ref{08.04.L10}), and  \eqref{09.18.12},  we obtain
	\begin{equation*}
		\begin{aligned}
			&\left\|
			\frac{\partial\varphi_\delta}{\partial\nu_{A_\delta}}(t)
			\right\|_{L^2(\Gamma_{\delta,2}^{1})}^2 \\
			&=
			\left\|
			\sum_{n,k=1}^{\infty}
			e^{\im\lambda_{nk}^{\delta}t}
			\varphi_{\delta,nk}^{0}
			\Phi_n^{(1)}
			\partial_{x_2}\Phi_{\delta,k}^{(2)}(1)
			\right\|_{L^2(\Gamma_{\delta,2}^{1})}^2=\sum_{k=1}^{\infty}
			\left|
			\sum_{n=1}^{\infty}
			e^{\im\lambda_{nk}^{\delta}t}
			\varphi_{\delta,nk}^{0}
			\partial_{x_2}\Phi_{\delta,k}^{(2)}(1)
			\right|^2
			\\
			&=
			\sum_{k=1}^{\infty}
			\left|
			\sum_{n=1}^{\infty}
			e^{\im\lambda_{nk}^{\delta}t}
			\lambda_{nk}^{\delta}\varphi_{\delta,nk}^{0}
			(\lambda_{nk}^{\delta})^{-1}
			\partial_{x_2}\Phi_{\delta,k}^{(2)}(1)
			\right|^2 \leq
			\sum_{k=1}^{\infty}
			\left(
			\sum_{n=1}^{\infty}
			(\lambda_{nk}^{\delta})^2
			|\varphi_{\delta,nk}^{0}|^2
			\right)
			\left(
			\sum_{n=1}^{\infty}
			\frac{|\partial_{x_2}\Phi_{\delta,k}^{(2)}(1)|^2}
			{(\lambda_{nk}^{\delta})^2}
			\right)\\ 
			&\leq
			C
			\sum_{n,k=1}^{\infty}
			(\lambda_{nk}^{\delta})^2
			|\varphi_{\delta,nk}^{0}|^2 \leq
			C\|\mathcal A_\delta\varphi^0\|_{L^2(\Omega_\delta)}^2, 
		\end{aligned}
	\end{equation*}
	where the positive constants $C$ depend only on $\al$. 
	
	Finally, by the same argument as Step 2 in the proof of Lemma \ref{09.19.L1}, we obtain $\f{\pt\vp_\de}{\pt\nu_{A_\de}}\in C([0,T]; L^2(\Ga_{\de,2}^{1}))$. We prove the lemma. 
\end{proof}

\begin{lemma}\label{09.20.L1}
	Let $\vp^0\in C_0^\iy(\Om)$ and 
	$\vp_\de\ (0\leq \de<\de_0)$ be the solution defined in 
	\eqref{09.16.11} and \eqref{09.16.28}, respectively. Then 
	\begin{equation*}
		\|\vp_\de(t)\|_{L^2(\Om_\de)}
		=
		\|\vp^0\|_{L^2(\Om_\de)},
	\end{equation*}
	and
	\begin{equation*}
		\|\vp_\de(t)\|_{H_0^1(\Om_\de;w_\de)}
		=
		\|\vp^0\|_{H_0^1(\Om_\de;w_\de)} .
	\end{equation*}
\end{lemma}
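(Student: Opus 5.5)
The plan is to work directly from the spectral expansion \eqref{09.16.29}, which represents $\vp_\de(t)$ for every $\de\in[0,\de_0)$. The case $\de=0$ is the expansion \eqref{09.16.11} with $\Om_0=\Om$ and $w_0=w$. By Remark \ref{08.04.R4} (for $\de>0$) and Remark \ref{08.04.R3} (for $\de=0$), the family $\{\Phi_n^{(1)}\Phi_{\de,k}^{(2)}\}_{n,k\in\N^*}$ is an orthonormal basis of $L^2(\Om_\de)$. Moreover, $\mcA_\de(\Phi_n^{(1)}\Phi_{\de,k}^{(2)})=\la_{nk}^\de\Phi_n^{(1)}\Phi_{\de,k}^{(2)}$, and the $\la_{nk}^\de$ are real. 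Both identities then reduce to Parseval's identity combined with $|e^{\im\la_{nk}^\de t}|=1$.

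For the $L^2$ identity, I apply Parseval to \eqref{09.16.29}. This gives $\|\vp_\de(t)\|_{L^2(\Om_\de)}^2=\sum_{n,k}|e^{\im\la_{nk}^\de t}\vp_{\de,nk}^0|^2=\sum_{n,k}|\vp_{\de,nk}^0|^2$, and by \eqref{09.17.5} the last sum equals $\|\vp^0\|_{L^2(\Om_\de)}^2$. Convergence of the series in $L^2$ for each fixed $t$ is automatic, since the coefficients are square-summable.

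For the $H_0^1$ identity, I use the norm of Remark \ref{09.15.R1}, transferred to $\Om_\de$ by Remark \ref{09.17.R1}, namely $\|u\|_{H_0^1(\Om_\de;w_\de)}^2=a_\de(u,u)$. The quantity $a_\de(u,u)$ equals $\|\mcA_\de^{1/2}u\|_{L^2(\Om_\de)}^2$ by Lemma \ref{09.15.L3} and its $\Om_\de$-analogue. The basis functions satisfy $a_\de(\Phi_n^{(1)}\Phi_{\de,k}^{(2)},\Phi_j^{(1)}\Phi_{\de,l}^{(2)})=\la_{nk}^\de\de_{nj}\de_{kl}$; this follows from \eqref{09.15.6} applied to $\mcA_\de$, or from \eqref{09.15.13} by Fubini. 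Consequently $\|u\|_{H_0^1(\Om_\de;w_\de)}^2=\sum_{n,k}\la_{nk}^\de|(u,\Phi_n^{(1)}\Phi_{\de,k}^{(2)})_{L^2(\Om_\de)}|^2$ for every $u\in H_0^1(\Om_\de;w_\de)$. Applying this to $u=\vp_\de(t)$, whose coefficients are $e^{\im\la_{nk}^\de t}\vp_{\de,nk}^0$, gives $\sum_{n,k}\la_{nk}^\de|\vp_{\de,nk}^0|^2$, which by the same formula is $\|\vp^0\|_{H_0^1(\Om_\de;w_\de)}^2$. The sum is finite because $\vp^0\in C_0^\iy(\Om_\de)\s D(\mcA_\de^{1/2})$, with the bound $M_\de\le M$ from Remark \ref{09.17.R2}.

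The only point needing care is justifying the $a_\de$-Parseval identity for the infinite series, which requires interchanging the limit with $a_\de$. I would handle this by noting that the partial sums of \eqref{09.16.29} form a Cauchy sequence in $H_0^1(\Om_\de;w_\de)$: their $a_\de$-norms are tails of the convergent series $\sum\la_{nk}^\de|\vp_{\de,nk}^0|^2$. The partial sums therefore converge in $H_0^1$ to the same limit as in $L^2$, namely $\vp_\de(t)$, and $a_\de$ is continuous. Equivalently, one can invoke Definition \ref{09.15.D1} with $\theta=\frac12$ together with Lemma \ref{09.15.L3}, which identify $\|u\|_{D(\mcA_\de^{1/2})}$ with the $H_0^1$ norm.
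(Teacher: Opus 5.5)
Your proposal is correct and follows the paper's proof: you use Parseval's identity in $L^2(\Omega_\delta)$ together with $|e^{\mathrm{i}\lambda_{nk}^\delta t}|=1$ for the first identity, and the spectral characterization $H_0^1(\Omega_\delta;w_\delta)=D(\mathcal A_\delta^{1/2})$ from Lemma \ref{09.15.L3} and Remark \ref{09.17.R1} for the second. Your additional check that the partial sums converge in $H_0^1(\Omega_\delta;w_\delta)$ is a small justification that the paper leaves implicit.
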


\begin{proof}
	From \eqref{09.16.11} and \eqref{09.16.28}, we have
	\begin{equation*}
		\vp_\de(t)
		=
		\sum_{n,k=1}^{\iy}
		e^{\im\la_{nk}^\de t}
		\vp_{\de,nk}^0
		\Phi_n^{(1)}
		\Phi_{\de,k}^{(2)} .
	\end{equation*}
	Since 
	$\{\Phi_n^{(1)}\Phi_{\de,k}^{(2)}\}_{n,k\in\mathbb N^*}$
	is an orthonormal basis of $L^2(\Omega_\de)$, we obtain
	\begin{equation*}
		\begin{aligned}
			\|\vp_\de(t)\|_{L^2(\Om_\de)}^2
			&=
			\sum_{n,k=1}^{\iy}
			\left|
			e^{\im\la_{nk}^\de t}
			\vp_{\de,nk}^0
			\right|^2 =
			\sum_{n,k=1}^{\iy}
			|\vp_{\de,nk}^0|^2
			=
			\|\vp^0\|_{L^2(\Om_\de)}^2 .
		\end{aligned}
	\end{equation*}
	
	Moreover, by the spectral characterization of 
	$H_0^1(\Omega_\de;w_\de)=D(\mathcal A_\de^{\f{1}{2}})$ by \eqref{09.15.8} and Remark \ref{09.17.R1}, we have
	\begin{equation*}
		\begin{aligned}
			\|\vp_\de(t)\|_{H_0^1(\Om_\de;w_\de)}^2
			&=
			\sum_{n,k=1}^{\iy}
			\la_{nk}^\de
			\left|
			e^{\im\la_{nk}^\de t}
			\vp_{\de,nk}^0
			\right|^2 =
			\sum_{n,k=1}^{\iy}
			\la_{nk}^\de
			|\vp_{\de,nk}^0|^2=
			\|\vp^0\|_{H_0^1(\Om_\de;w_\de)}^2 .
		\end{aligned}
	\end{equation*}
	Hence,
	\begin{equation*}
		\|\vp_\de(t)\|_{H_0^1(\Om_\de;w_\de)}
		=
		\|\vp^0\|_{H_0^1(\Om_\de;w_\de)} .
	\end{equation*}
	This proves the lemma.
\end{proof}

\begin{definition}\label{09.17.D3}
	Let $\vp_\de\in L^2(0,T; H_0^1(\Om_\de;w_\de))$. Define
	\begin{equation*}
		E_\de^0 \vp_\de(x,t)=
		\begin{cases}
			\vp_\de(x,t), &(x,t)\in Q_\de, \\
			0, &(x,t)\in Q-Q_\de. 
		\end{cases}
	\end{equation*}
	Then $E_\de^0 \vp_\de\in L^2(0,T; H_0^1(\Om;w))$ and 
	\begin{equation*}
		\|E_\de^0 \vp_\de\|_{L^2(0,T; H_0^1(\Om;w))}=\|\vp_\de\|_{L^2(0,T; H_0^1(\Om_\de;w_\de))}. 
	\end{equation*}

	Let $\vp_\de\in H^1(0,T; L^2(\Om_\de))$. Define $E_\de^0 \vp_\de$ as above, then $E_\de^0\vp_\de\in H^1(0,T; L^2(\Om))$ and 
	\begin{equation*}
		\pt_t E_\de^0\vp_\de=E_\de^0 \pt_t\vp_\de, \mbox{ and } \|\pt_tE_\de^0\vp_\de\|_{L^2(Q)}=\|\pt_t\vp_\de\|_{L^2(Q_\de)}. 
	\end{equation*}

\end{definition}

\begin{theorem}\label{09.17.T2}
	Let $\vp^0\in C_0^\infty(\Omega)$ and let $\delta_0>0$ be
	defined in \eqref{09.17.6}. Let $\vp$ and $\vp_\delta\ (0<\de<\de_0)$
	be the weak solutions of \eqref{09.15.1} and
	\eqref{09.16.27}, respectively, with initial data $\vp^0$ and $f=f_\de=0$.
	Then, up to a subsequence as $\delta\to0^+$,
	\begin{equation}\label{09.17.7}
		E_\delta^0\vp_\delta
		\to\vp \mbox{ strongly in }
		L^2(0,T; H_0^1(\Omega;w)),
	\end{equation}
	and
	\begin{equation}\label{09.17.8}
		\frac{\partial E_\delta^0\vp_\delta}{\partial\nu_A}
		\to
		\frac{\partial\vp}{\partial\nu_A} \mbox{ strongly in }
		L^2((\partial\Omega-\Gamma_2^0)\times(0,T)).
	\end{equation}
\end{theorem}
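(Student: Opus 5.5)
We will work entirely with the spectral expansions \eqref{09.16.11} and \eqref{09.16.29}, and split each series into a low-frequency block and a high-frequency tail. Fix $\e>0$. By Remark \ref{09.17.R2}, $\sum_{n,k}(\la_{nk}^\de)^4|\vp_{\de,nk}^0|^2\le M$ uniformly in $\de\in[0,\de_0)$. Since $\la_{nk}^\de\ge\la_{nk}\to+\iy$ (Lemma \ref{08.04.L10}), we can choose $m$ independent of $\de$ such that on $A_m$ we have $\la_{nk}^\de\ge M^{1/2}\e^{-1}$. Then the tails
\[
\sum_{(n,k)\in A_m}\la_{nk}^\de|\vp_{\de,nk}^0|^2 \quad\text{and}\quad \sum_{(n,k)\in A_m}\la_{nk}|\vp_{nk}^0|^2
\]
are at most $C\e^2$, uniformly in $\de$. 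The same uniformity holds for the weighted sums $\sum_{(n,k)\in A_m}(\la_{nk}^\de)^2|\vp_{\de,nk}^0|^2$ that control the normal derivatives; this follows from the Cauchy--Schwarz computations in Lemma \ref{09.19.L2}, with $\|\mcA_\de\vp^0\|^2$ replaced by the tail sum. Because the $H_0^1(\Om;w)$-norm is $t$-independent (Lemma \ref{09.20.L1}), the high-frequency contributions to $\|E_\de^0\vp_\de-\vp\|_{L^2(0,T;H_0^1(\Om;w))}$ and to the boundary flux difference are $O(\sqrt T\,\e)$ uniformly in $\de$.

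For the finitely many modes with $n,k\le m$, we show that each term converges. We have $\la_{nk}^\de\to\la_{nk}$ by Lemma \ref{08.04.L10}, so $e^{\im\la_{nk}^\de t}\to e^{\im\la_{nk}t}$ uniformly on $[0,T]$. By Lemma \ref{09.18.L1}, $E_\de^0\Phi_{\de,k}^{(2)}\to\Phi_k^{(2)}$ in $H_0^1(\La_2;w)$, and hence, using the tensor structure (Lemma \ref{08.04.L6}, Remark \ref{09.17.R1}), $E_\de^0(\Phi_n^{(1)}\Phi_{\de,k}^{(2)})\to\Phi_n^{(1)}\Phi_k^{(2)}$ in $H_0^1(\Om;w)$. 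Since $\vp^0$ is supported in $\Om_\de$, the coefficients satisfy
\[
\vp_{\de,nk}^0=\big(\vp^0,E_\de^0(\Phi_n^{(1)}\Phi_{\de,k}^{(2)})\big)_{L^2(\Om)}\to\vp_{nk}^0 .
\]
A finite sum of products of convergent factors converges, which gives \eqref{09.17.7}. Since the eigenvalues are simple and the signs can be fixed as in Lemma \ref{09.18.L1}, the whole family converges and not merely a subsequence.

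For \eqref{09.17.8}, we treat the three pieces of $\pt\Om-\Ga_2^0$ separately. On $\Ga_{1}^{\pm1}$ the conormal derivative is $\mp\pt_{x_1}$, giving the terms $\sqrt{\la_n^{(1)}}\,\vp_{\de,nk}^0e^{\im\la_{nk}^\de t}\Phi_{\de,k}^{(2)}(x_2)$ (extended by zero on $(0,\de)$). Their low modes converge in $L^2(\Ga_1^{\pm1}\ts(0,T))$ by $L^2(\La_2)$-convergence of $E_\de^0\Phi_{\de,k}^{(2)}$. On $\Ga_2^1$ the terms are $\vp_{\de,nk}^0e^{\im\la_{nk}^\de t}\Phi_n^{(1)}(x_1)\pt_{x_2}\Phi_{\de,k}^{(2)}(1)$, and their convergence is exactly \eqref{09.18.11}. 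The tail on $\Ga_2^1$ is controlled uniformly in $\de$ using \eqref{09.18.12}, i.e. $|\pt_{x_2}\Phi_{\de,k}^{(2)}(1)|^2\le C\la_{\de,k}^{(2)}$, together with the Cauchy--Schwarz argument of Lemma \ref{09.19.L2}.

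The main obstacle is the tails, specifically the uniformity in $\de$ of the bound on $\Ga_2^1$ and on the lateral sides. These bounds need $\sum_n\la_{\de,k}^{(2)}/(\la_{nk}^\de)^2$ and $\sum_n\la_n^{(1)}/(\la_{nk}^\de)^2$ to be bounded independently of $k$ and $\de$, which uses $\la_n^{(1)}\sim n^2$. They also need the data bound $M$ to be uniform, which is provided by Remark \ref{09.17.R2} because $\supp\vp^0$ stays away from $\Ga_2^0$. We also have to check that, on the strip $(0,\de)$ of the lateral boundary, the zero extension contributes only the vanishing limit terms.
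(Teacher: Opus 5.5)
Your proposal is correct and follows essentially the same route as the paper. Like the paper, you split the spectral expansions into low and high frequencies with a $\de$-independent cutoff, control the tails uniformly through the bound $M$ of Remark \ref{09.17.R2}, get convergence of the finitely many low modes from Lemmas \ref{08.04.L10}, \ref{09.18.L1} and \ref{08.05.L2}, and bound the tails on $\Ga_1^{\pm1}$ and $\Ga_2^1$ by Cauchy--Schwarz together with \eqref{09.18.12}.

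Your extra claim that the whole family converges, not just a subsequence, is also right. The cleanest justification is that each term $\vp_{\de,nk}^0\Phi_n^{(1)}\Phi_{\de,k}^{(2)}$, and likewise $\vp_{\de,nk}^0\,\pt_{x_2}\Phi_{\de,k}^{(2)}(1)$, is unchanged under $\Phi_{\de,k}^{(2)}\mapsto-\Phi_{\de,k}^{(2)}$, so no consistent sign normalization is needed.
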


\begin{proof}
	We prove this theorem by the following steps. 
	
	{\it Step 1.}
	By the spectral decomposition established in \eqref{09.16.29}, the
	solution $\vp_\delta$ for $\delta\in[0,\delta_0)$ admits the
	representation
	$$
	\vp_\delta(t)
	=
	\sum_{n,k=1}^{\infty}
	e^{\im \lambda_{nk}^{\delta}t}
	\vp_{\delta,nk}^0
	\Phi_n^{(1)}\Phi_{\delta,k}^{(2)} .
	$$
	Note that $\Phi_n^{(1)}$ is defined in Remark \ref{08.05.R1}. 
	
	{\it Step 2}. We prove \eqref{09.17.7}. 
	
	Let $\e>0$. Then there exists $m>0$ such that for all $n,k\geq m$ we have 
	\begin{equation*}
		\la_n^{(1)}\geq \max\{1, M^\f{1}{2}\e^{-1}\}, \quad \la_k^{(2)}\geq \max\{1, M^\f{1}{2}\e^{-1}\},  \quad \sum_{n=m+1}^\iy \f{1}{\la_n^{(1)}}<\f{\e}{M}, \quad \sum_{k=m+1}^\iy \f{1}{\la_k^{(2
				)}}<\f{\e}{M}, 
	\end{equation*}
	where the positive constant $M$ is defined in Remark \ref{09.17.R2}. 
	From Lemma \ref{08.04.L10} we obtain 
	\begin{equation*}
		\la_{\f{1}{8},k}^{(2)}\geq \la_{\de,k}^{(2)}\geq \la_k^{(2)}\geq  \max\{1,M^\f{1}{2}\e^{-1}\}, \mbox{ for all } \de\in \left(0,\f{1}{8}\right) \mbox{ and all } k\in\N^*. 
	\end{equation*}
	
	Now, denote 
	\begin{equation*}
		A_m=\{(n,k)\in\N^2\colon n\geq m+1, \mbox{ or } k\geq m+1\}. 
	\end{equation*}
	Note that $m$ is independent of $\de\in [0,\f{1}{8})$. 
	Then, for all $\de\in [0,\f{1}{8})$, from \eqref{09.16.29}, we obtain 
	\begin{equation*}
		\begin{split}
			B_1^\de(m) 
			&=\left\|\sum_{n,k\in A_m} e^{\im \la_{nk}^\de t} \vp_{\de,nk}^0 \Phi_n^{(1)}\Phi_{\de,k}^{(2)}\right\|_{H_0^1(\Om_\de;w_\de)}=\left(\sum_{n,k\in A_m}\la_{nk}^\de |\vp_{\de,nk}^0|^2\right)^\f{1}{2}\\
			&\leq  \left(\sum_{n,k\in A_m}(\la_{nk}^{\de})^{-2}(\la_{nk}^\de)^4|\vp_{\de,nk}^0|^2\right)^\f{1}{2}\leq \left(\f{\e^2}{M}\sum_{n,k\in A_m}(\la_{nk}^\de)^4|\vp_{\de,nk}^0|^2\right)^\f{1}{2} <\e.  
		\end{split}
	\end{equation*}
	Note that 
	\begin{equation*}
		\begin{split}
			&\left\|E_\de^0\vp_\de(t)-\vp(t)\right\|_{H_0^1(\Om;w)}\\
			&\leq \left\|\sum_{n,k\leq m}e^{\im \la_{nk}^\de t} \vp_{\de,nk}^0 \Phi_n^{(1)}E_\de^0 \Phi_{\de,k}^{(2)}-\sum_{n,k\leq m}e^{\im \la_{nk}t} \vp_{nk}^0 \Phi_n^{(1)} \Phi_{k}^{(2)}\right\|_{H_0^1(\Om;w)}\\
			&\hspace{4.5mm}+\left\|\sum_{n,k\in A_m} e^{\im \la_{nk}^\de t} \vp_{\de,nk}^0 \Phi_n^{(1)}E_\de^0 \Phi_{\de,k}^{(2)}\right\|_{H_0^1(\Om;w)}+\left\|\sum_{n,k\in A_m}e^{\im \la_{nk}t} \vp_{nk}^0 \Phi_n^{(1)} \Phi_{k}^{(2)}\right\|_{H_0^1(\Om;w)}\\
			&=: B_0(m)+B_1^\de(m)+B_1^0(m). 
		\end{split}
	\end{equation*}
	Since  
	\begin{equation*}
		\begin{split}
			&\sum_{n,k\leq m}e^{\im \la_{nk}^\de t} \vp_{\de,nk}^0 \Phi_n^{(1)}E_\de^0 \Phi_{\de,k}^{(2)}-\sum_{n,k\leq m}e^{\im \la_{nk}t} \vp_{nk}^0 \Phi_n^{(1)}E_\de^0 \Phi_{k}^{(2)}\\
			&=\sum_{n,k\leq m} \left(e^{\im \la_{nk}^\de t}-e^{\im \la_{nk}t}\right) \vp_{\de,nk}^0 \Phi_n^{(1)}E_\de^0\Phi_{\de,k}^{(2)}+\sum_{n,k\leq m}e^{\im \la_{nk}t} (\vp_{\de,nk}^0-\vp_{nk}^0)\Phi_n^{(1)}E_\de^0\Phi_{\de,k}^{(2)}\\
			&\hspace{4.5mm}+\sum_{n,k\leq m}e^{\im \la_{nk}t}\vp_{nk}^0 \Phi_n^{(1)} (E_\de^0\Phi_{\de,k}^{(2)}-\Phi_k^{(2)}), 
		\end{split}
	\end{equation*}
	and from Lemma \ref{08.04.L10} and $|e^{\im \la_{nk}^\de t}-e^{\im \la_{nk}t}|\leq T|\la_{nk}^\de-\la_{nk}|$ for all $t\in [0,T]$ and $\la_{nk}^\de\leq \la_{nk}^\f{1}{8}$ for all $n,k\in \N^*$ we obtain 
	\begin{equation*}
		\begin{split}
			\left\|\sum_{n,k\leq m} \left(e^{\im \la_{nk}^\de t}-e^{\im \la_{nk}t}\right) \vp_{\de,nk}^0 \Phi_n^{(1)}E_\de^0\Phi_{\de,k}^{(2)}\right\|_{H_0^1(\Om;w)}^2
			&\leq T^2\sum_{n,k\leq m}|\la_{nk}^\de-\la_{nk}|^2\la_{nk}^\de|\vp_{\de,nk}^0|^2\leq MT^2\e^2
		\end{split}
	\end{equation*}
	for $\de>0$ small enough; from $|\vp_{\de,nk}^0-\vp_{nk}^0|=(\vp^0,\Phi_n^{(1)}\Phi_{\de,k}^{(2)})_{L^2(\Om)}-(\vp^0, \Phi_n^{(1)}\Phi_k^{(2)})$ and Lemma \ref{09.18.L1} and $\la_{nk}^\de\leq \la_{nk}^\f{1}{8}$ for all $n,k\in \N^*$ we obtain 
	\begin{equation*}
		\begin{split}
			&\left\|\sum_{n,k\leq m}e^{\im \la_{nk}t} (\vp_{\de,nk}^0-\vp_{nk}^0)\Phi_n^{(1)}E_\de^0\Phi_{\de,k}^{(2)}\right\|_{H_0^1(\Om;w)}^2\leq \sum_{n,k\leq m} |\vp_{\de,nk}^0-\vp_{nk}^0|^2 \la_{nk}^\de <\e 
		\end{split}
	\end{equation*}
	for $\de>0$ small enough; from Lemma \ref{09.18.L1} and $\la_{nk}^\de\leq \la_{nk}^\f{1}{8}$ for all $n,k\in \N^*$ we obtain 
	\begin{equation*}
		\begin{split}
			&\left\|\sum_{n,k\leq m}e^{\im \la_{nk}t}\vp_{nk}^0 \Phi_n^{(1)} (E_\de^0\Phi_{\de,k}^{(2)}-\Phi_k^{(2)})\right\|_{H_0^1(\Om;w)}^2\leq \sum_{n,k\leq m} |\vp_{nk}^0|^2 \la_n^{(1)} \|E_\de^0\Phi_{\de,k}^{(2)}-\Phi_k^{(2)}\|_{H_0^1(\La_2;w)}^2<\e 
		\end{split}
	\end{equation*}
	for $\de>0$ small enough, hence $B_0(m)\ra 0$ as $\de\ra 0$. Therefore, for a.e.~$t\in [0,T]$, we have  
	\begin{equation*}
		\left\|E_\de^0\vp_\de(t)-\vp(t)\right\|_{H_0^1(\Om;w)}\ra 0 \mbox{ as } \de\ra 0. 
	\end{equation*}
	Which shows  \eqref{09.17.7}. 
	
	{\it Step 3}. We prove 
	\begin{equation}\label{09.20.6}
		\begin{split} 
			\f{\pt E_\de^0\vp_\de}{\pt \nu_{A}}\ra \f{\pt \vp}{\pt \nu_A} \mbox{ strongly in } L^2(\Ga_1^{-1}\ts (0,T)). 
		\end{split}
	\end{equation}
	
	Let $\e>0, m\in\N^*$ and $A_m$ be defined in Step 2. Then, from $\pt_{x_1}\Phi_n^{(1)}(-1)=\sqrt{\la_n^{(1)}}$ (see Remark \ref{08.05.R1}) and 
	\begin{equation*}
		\begin{split}
			\f{\pt E_\de^0\vp_\de}{\pt \nu_A}=\nabla E_\de^0\vp_\de\cdot A_\de\nu=-[\pt_{x_1}E_\de^0\vp_\de](-1,\cdot) \mbox{  on } \Ga_1^{-1}\ts (0,T), 
		\end{split}
	\end{equation*}
	we obtain for all $\de\in [0,\de_0)$, 
	\begin{equation*}
		\begin{split}
			C_1^\de(m)
			&=\left\|\sum_{n,k\in A_m} e^{\im \la_{nk}^\de t} \vp_{\de,nk}^0 [-\pt_{x_1}\Phi_n^{(1)}](1)E_\de^0\Phi_{\de,k}^{(2)}\right\|_{L^2(\Ga_1^{-1})}\\
			&=\left\|\sum_{n,k\in A_m}e^{\im \la_{nk}^\de t}\vp_{\de,nk}^0 \sqrt{\la_n^{(1)}} E_\de^0\Phi_{\de,k}^{(2)} \right\|_{L^2(\Ga_1^{-1})}\\
			&=\left(\sum_{k=m+1}^\iy\left|\sum_{n=1}^\iy e^{\im \la_{nk}^\de t} \vp_{\de,nk}^0 \sqrt{\la_n^{(1)}} \right|^2+\sum_{k=1}^m\left|\sum_{n=m+1}^\iy e^{\im \la_{nk}^\de t} \vp_{\de,nk}^0 \sqrt{\la_{n}^{(1)}}\right|^2 \right)^\f{1}{2}. 
		\end{split}
	\end{equation*}
	Note that from $\la_{nk}^\de =\la_{n}^{(1)}+\la_{\de,k}^{(2)}\geq   \la_n^{(1)}$ we obtain 
	\begin{equation*}
		\begin{split}
			&\sum_{k=m+1}^\iy\left|\sum_{n=1}^\iy e^{\im \la_{nk}^\de t} \vp_{\de,nk}^0 \sqrt{\la_n^{(1)}} \right|^2\\
			&= \sum_{k=m+1}^\iy\left|\sum_{n=1}^\iy e^{\im \la_{nk}^\de t}\vp_{\de, nk}^0\la_{nk}^\de (\la_{nk}^\de)^{-1} \sqrt{\la_n^{(1)}}\right|^2 \leq \sum_{k=m+1}^\iy \left(\sum_{n=1}^\iy (\la_{nk}^\de)^2|\vp_{\de,nk}^0|^2\right)\left(\sum_{n=1}^\iy \f{\la_n^{(1)}}{(\la_{nk}^\de)^2}\right)\\
			&\leq \sum_{k=m+1}^\iy \left(\sum_{n=1}^\iy\f{1}{\la_n^{(1)}+\la_{\de,k}^{(2)}} (\la_{nk}^\de)^3|\vp_{nk}^0|^2\right) \left(\sum_{n=1}^\iy \f{1}{\la_n^{(1)}}\right) \leq C \f{\e}{M^\f{1}{2}} \sum_{n,k=1}^\iy (\la_{nk}^\de)^3|\vp_{nk}^0|^2<C\e, 
		\end{split}
	\end{equation*}
	and 
	\begin{equation*}
		\begin{split}
			&\sum_{k=1}^m\left|\sum_{n=m+1}^\iy e^{\im \la_{nk}^\de t} \vp_{\de,nk}^0 \sqrt{\la_{n}^{(1)}}\right|^2\\
			&=\sum_{k=1}^m \left|\sum_{n=m+1}^\iy e^{\im\la_{nk}^\de t} \vp_{\de,nk}^0 \la_{nk}^\de (\la_{nk}^{\de})^{-1}\sqrt{\la_n^{(1)}}\right|^2\leq \sum_{k=1}^m \left(\sum_{n=m+1}^\iy (\la_{nk}^\de)^2|\vp_{\de,nk}^0|^2 \right) \left(\sum_{n=m+1}^\iy\f{\la_n^{(1)}}{(\la_{nk}^\de)^2}\right) \\
			&\leq \left(\sum_{n=m+1}^\iy \f{1}{\la_n^{(1)}}\right)\left(\sum_{n,k=1}^\iy(\la_{nk}^\de)^2|\vp_{\de, nk}^0|^2\right)<\e. 
		\end{split}
	\end{equation*}
	Hence 
	\begin{equation*}
		C_1^\de(m)<C\e, \mbox{ for all }\de\in [0,\de_0). 
	\end{equation*}
	Combining this and 
	\begin{equation*}
		\begin{split}
			&\left\|\f{\pt E_\de^0\vp_\de}{\pt \nu_{A}}(t)- \f{\pt \vp}{\pt \nu_A}(t)\right\|_{L^2(\Ga_1^{-1})}^2\\
			&=\left\|\sum_{n,k=1}^m e^{\im \la_{nk}^\de t} \vp_{\de,nk}^0 [-\pt_{x_1}\Phi_n^{(1)}](1)E_\de^0\Phi_{\de,k}^{(2)}-\sum_{n,k=1}^m e^{\im \la_{nk} t}\vp_{nk}^0 [-\pt_{x_1}\Phi_n^{(1)}](1) \Phi_k^{(2)} \right\|_{L^2(\Ga_1^{-1})}^2\\
			&\hspace{4.5mm}+\left\|\sum_{n,k\in A_m} e^{\im \la_{nk}^\de t} \vp_{\de,nk}^0 [-\pt_{x_1}\Phi_n^{(1)}](1)E_\de^0\Phi_{\de,k}^{(2)}\right\|_{L^2(\Ga_1^{-1})}\\
			&\hspace{4.5mm}+\left\|\sum_{n,k\in A_m} e^{\im \la_{nk}^\de t} \vp_{\de,nk}^0 [-\pt_{x_1}\Phi_n^{(1)}](1)\Phi_{k}^{(2)}\right\|_{L^2(\Ga_1^{-1})}, 
		\end{split}
	\end{equation*}
	and 
	\begin{equation*}
		\begin{split}
			&\sum_{n,k=1}^m e^{\im \la_{nk}^\de t} \vp_{\de,nk}^0 [-\pt_{x_1}\Phi_n^{(1)}](1)E_\de^0\Phi_{\de,k}^{(2)}-\sum_{n,k=1}^m e^{\im \la_{nk} t}\vp_{nk}^0 [-\pt_{x_1}\Phi_n^{(1)}](1) \Phi_k^{(2)}\\
			&=\sum_{n,k=1}^m \left(e^{\im \la_{nk}^\de t}-e^{\im\la_{nk}t}\right)\vp_{\de,nk}^0 \sqrt{\la_n^{(1)}}E_\de^0\Phi_{\de,k}^{(2)}+\sum_{n,k=1}^m e^{\im \la_{nk}t}(\vp_{\de,nk}^0-\vp_{nk}^0)\sqrt{\la_n^{(1)}}E_\de^0\Phi_{\de,k}^{(2)}\\
			&\hspace{4.5mm}+\sum_{n,k=1}^m e^{\im\la_{nk}t}\vp_{nk}^0 \sqrt{\la_n^{(1)}}(E_\de^0\Phi_{\de,k}^{(2)}-\Phi_k^{(2)}),
		\end{split}
	\end{equation*}
	and 
	\begin{equation*}
		|e^{\im\la_{nk}^\de t}-e^{\im\la_{nk} t}|\leq T|\la_{nk}^\de-\la_{nk}| \mbox{ for all }t\in [0,T], \  \vp_{\de,nk}^0-\vp_{nk}^0=(\vp^0, \Phi_n^{(1)} (E_\de^0\Phi_{\de,k}^{(2)}-\Phi_k^{(2)}))_{L^2(\Om)},
	\end{equation*}
	and Lemmas \ref{08.04.L10} and \ref{09.18.L1}, we obtain 
	\begin{equation*}
		\left\|\f{\pt E_\de^0\vp_\de}{\pt \nu_{A}}(t)- \f{\pt \vp}{\pt \nu_A}(t)\right\|_{L^2(\Ga_1^{-1})}\ra 0 \mbox{ uniformly for }t\in [0,T] \mbox{ as } \de\ra 0^+. 
	\end{equation*}
	This proves \eqref{09.20.6} 
	
	{\it Step 4}. 
	By the same argument as in Step 3,  
	we obtain
	\begin{equation*}
		\frac{\partial E_\delta^0\vp_\delta}{\partial \nu_A}
		\to
		\frac{\partial \vp}{\partial \nu_A}
		\quad\mbox{strongly in }
		L^2(\Gamma_1^{1}\times(0,T)).
	\end{equation*}
	
	{\it Step 5}. 
	We prove 
	\begin{equation}\label{09.20.7}
		\begin{split} 
			\f{\pt E_\de^0\vp_\de}{\pt \nu_{A}}\ra \f{\pt \vp}{\pt \nu_A} \mbox{ strongly in } L^2(\Ga_2^{1}\ts (0,T)). 
		\end{split}
	\end{equation}
	
	Since 
	\begin{equation*}
		\f{\pt E_\de^0\vp_\de}{\pt \nu_A}=\nabla E_\de^0\vp_\de\cdot A_\de\nu =\pt_{x_2}\vp_\de(\cdot, 1) \mbox{ on }\Ga_2^1, 
	\end{equation*}
	we obtain 
	\begin{equation*}
		\begin{split}
			&\left\|\f{\pt E_\de^0\vp_\de}{\pt \nu_{A}}(t)- \f{\pt \vp}{\pt \nu_A}(t)\right\|_{L^2(\Ga_2^{1})}\\
			&=\left\|\sum_{n,k=1}^\iy  e^{\im \la_{nk}^\de t} \vp_{\de,nk}^0 \Phi_n^{(1)}[\pt_{x_2} E_\de^0\Phi_{\de,k}^{(2)}](1)-\sum_{n,k=1}^\iy e^{\im \la_{nk} t}\vp_{nk}^0 \Phi_n^{(1)}  [\pt_{x_2} \Phi_k^{(2)}](1) \right\|_{L^2(\Ga_2^{1})}. 
		\end{split}
	\end{equation*}
	Similar to Step 3, but exchanging the roles of
	$\lambda_n^{(1)}$ and $\lambda_{\delta,k}^{(2)}$, and using Lemma
	\ref{08.05.L2}, which gives the estimate of the boundary traces
	$|\partial_{x_2}\Phi_{\delta,k}^{(2)}(1)|$, we obtain
	$$
	\frac{\partial E_\delta^0\vp_\delta}{\partial\nu_A}
	\to
	\frac{\partial\vp}{\partial\nu_A} \mbox{ strongly in }
	L^2(\Gamma_2^1\times(0,T)).
	$$
	
	{\it Step 6}. 
	Combining the results of Steps 3-5 and noting that
	$$
	\partial\Omega-\Gamma_2^0
	=
	\Gamma_1^{-1}\cup\Gamma_1^1\cup\Gamma_2^1 \mbox{ a.e.~on }\partial\Omega,
	$$
	we obtain
	$$
	\frac{\partial E_\delta^0\vp_\delta}{\partial\nu_A}
	\to
	\frac{\partial\vp}{\partial\nu_A} \mbox{ strongly in }
	L^2((\partial\Omega-\Gamma_2^0)\times(0,T)).
	$$
	Therefore, \eqref{09.17.8} holds, and the proof is complete.
\end{proof}

\begin{lemma}\label{09.19.L3}
	Let $\vp^0\in C_0^\iy(\Om)$ and $\vp_\de\ (0<\de<\de_0)$ be the solution defined in \eqref{09.16.28}, where $\de_0$ is defined in \eqref{09.17.6}.  Then 
	\begin{equation*}
		\left\|\f{\pt\vp_\de}{\pt \nu_{A_\de}}\right\|_{L^2((\pt\Om_\de-\ol{\Ga_{\de,2}^\de})\ts (0,T))}\leq C\|\vp^0\|_{H_0^1(\Om;w)}, 
	\end{equation*}
	where the positive constant $C$ depends only on $\al$ and $T$.  
\end{lemma}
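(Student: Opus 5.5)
The plan is to prove this hidden-regularity estimate with a multiplier argument applied directly to the uniformly elliptic problem \eqref{09.16.27} on $\Omega_\delta$. The multiplier is a vector field that vanishes near the bottom boundary $\Gamma_{\delta,2}^\delta$, so that every constant can be taken independent of $\delta$. The computations are justified because, by Lemmas \ref{09.17.L1}, \ref{09.19.L1} and \ref{09.19.L2}, for $\varphi^0\in C_0^\infty(\Omega)$ and $\delta\in(0,\delta_0)$ we have $\varphi_\delta\in C([0,T];D(\mathcal A_\delta))\cap L^2(0,T;H^2(\Omega_\delta))$ with $\partial_t\varphi_\delta\in C([0,T];L^2(\Omega_\delta))$. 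Hence all integrations by parts below are legitimate, and the normal derivative makes sense in $L^2$ on $\partial\Omega_\delta-\overline{\Gamma_{\delta,2}^\delta}$.

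\textbf{Choice of multiplier.} Take $\zeta$ as in the proof of Lemma \ref{08.05.L2}: $\zeta=0$ on $(-\infty,\frac14)$, $\zeta=1$ on $(\frac12,+\infty)$, $|\zeta'|\le 8$. Set $q(x)=(x_1,\zeta(x_2))$. Then
\begin{itemize}
\item $q\cdot\nu=1$ on $\Gamma_{\delta,1}^{\pm1}$ and on $\Gamma_{\delta,2}^1$;
\item $q\cdot\nu=-\zeta(\delta)=0$ on $\Gamma_{\delta,2}^\delta$, since $\delta<\frac18$;
\item $\partial_{x_2}q_2=\zeta'$ is supported in $(\frac14,\frac12)$, where $x_2^\alpha$ and its derivative are bounded above and below by constants depending only on $\alpha$.
\end{itemize}

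\textbf{Multiplier identity.} Multiply the equation $\mathrm i\partial_t\varphi_\delta+\mathcal A_\delta\varphi_\delta=0$ by $q\cdot\nabla\overline{\varphi_\delta}+\frac12(\operatorname{div}q)\overline{\varphi_\delta}$, integrate over $Q_\delta$, and take real parts.

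For the elliptic part, integration by parts gives the standard Rellich-type identity. Because $\varphi_\delta=0$ on $\partial\Omega_\delta$, we have $\nabla\varphi_\delta=(\partial_\nu\varphi_\delta)\nu$ there, and $A_\delta\nu$ is parallel to $\nu$ on each flat side. The boundary contribution is therefore $\frac12\int_{\Sigma_\delta}(q\cdot\nu)\,\nu\cdot A_\delta\nu\,|\partial_\nu\varphi_\delta|^2$. Its $\Gamma_{\delta,2}^\delta$ part vanishes, and on the remaining sides $\nu\cdot A_\delta\nu=1$, so this is $\frac12\|\partial\varphi_\delta/\partial\nu_{A_\delta}\|^2_{L^2((\partial\Omega_\delta-\overline{\Gamma_{\delta,2}^\delta})\times(0,T))}$. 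The interior terms have the form
\[
\iint_{Q_\delta}\Big(\partial_jq_k\,a_{jl}\,\partial_l\varphi_\delta\,\partial_k\overline{\varphi_\delta}-\tfrac12(q\cdot\nabla a_{jl})\,\partial_j\varphi_\delta\,\partial_l\overline{\varphi_\delta}\Big),
\]
together with terms containing $\nabla(\operatorname{div}q)$. The only nonconstant coefficient is $a_{22}=x_2^\alpha$, and $q\cdot\nabla x_2^\alpha=\alpha\zeta x_2^{\alpha-1}$ is bounded by $C x_2^\alpha$ on $\operatorname{supp}\zeta$. Hence every interior term is bounded by $C\int_0^T\!\!\int_{\Omega_\delta}\nabla\varphi_\delta\cdot A_\delta\nabla\overline{\varphi_\delta}\,dx\,dt+C\int_0^T\|\varphi_\delta\|_{L^2}^2\,dt$, with $C$ depending only on $\alpha$.

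For the time-derivative part, the choice of multiplier (including the $\frac12\operatorname{div}q$ term) makes $\operatorname{Re}\iint \mathrm i\partial_t\varphi_\delta\,(q\cdot\nabla\overline{\varphi_\delta}+\tfrac12\operatorname{div}q\,\overline{\varphi_\delta})$ an exact time derivative. I expect this to be the main obstacle, since one has to check carefully that integrating by parts in $x$ leaves only $\frac12\big[\operatorname{Im}\int_{\Omega_\delta}\varphi_\delta\,q\cdot\nabla\overline{\varphi_\delta}\,dx\big]_0^T$ (up to sign), with no leftover boundary term. That leftover term would contain $\varphi_\delta$ on $\partial\Omega_\delta$, which is zero. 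The endpoint term is then bounded by $C\|\varphi_\delta(t)\|_{L^2}\|\nabla\varphi_\delta(t)\|_{L^2}$ at $t=0,T$. Here a $\delta$-uniform bound needs care, because $\int_{\Omega_\delta}|q_2\partial_{x_2}\varphi_\delta|^2\le 4^\alpha\int x_2^\alpha|\partial_{x_2}\varphi_\delta|^2$ thanks to $\operatorname{supp}\zeta\subset[\frac14,1]$, while $q_1\partial_{x_1}$ carries no weight. So the endpoint term is bounded by $C\|\varphi_\delta(t)\|_{L^2}\|\varphi_\delta(t)\|_{H_0^1(\Omega_\delta;w_\delta)}$.

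\textbf{Conclusion.} Combining these bounds with the conservation laws of Lemma \ref{09.20.L1}, the Poincaré inequality \eqref{09.15.3} (valid on $\Omega_\delta$ by Remark \ref{09.17.R1}), and the identity $\|\varphi^0\|_{H_0^1(\Omega_\delta;w_\delta)}=\|\varphi^0\|_{H_0^1(\Omega;w)}$ (since $\operatorname{supp}\varphi^0\subset\Omega_\delta$), we obtain
\[
\tfrac12\Big\|\frac{\partial\varphi_\delta}{\partial\nu_{A_\delta}}\Big\|^2_{L^2((\partial\Omega_\delta-\overline{\Gamma_{\delta,2}^\delta})\times(0,T))}\le C(1+T)\|\varphi^0\|^2_{H_0^1(\Omega;w)},
\]
with $C$ depending only on $\alpha$. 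This is the claim.
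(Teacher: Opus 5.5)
Your argument is correct. It rests on the same mechanism as the paper's: a Rellich-type multiplier whose normal component vanishes on $\Gamma_{\delta,2}^\delta$ and which differentiates $x_2^\alpha$ only where $x_2\ge\frac14$, closed with Lemma~\ref{09.20.L1} and the Poincar\'e inequality. The packaging differs, however.

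The paper treats the three observed sides one at a time, with one-directional multipliers. It uses $-\zeta(x_1)\overline{\partial_{x_1}\varphi_\delta}-\zeta'(x_1)\overline{\varphi_\delta}$, cut off so that only $\Gamma_{\delta,1}^{-1}$ contributes, and its mirror image for $\Gamma_{\delta,1}^{1}$. For $\Gamma_{\delta,2}^1$ it uses $-\zeta(x_2)\overline{\partial_{x_2}\varphi_\delta}-\zeta'(x_2)\overline{\varphi_\delta}$. Its zero-order correction is the full $\operatorname{div}q\,\overline{\varphi_\delta}$, so the time part is not an exact derivative. The leftover $-\mathrm i\iint\zeta'\overline{\varphi_\delta}\,\partial_t\varphi_\delta$ is then converted, via the equation, into the energy term $\iint A_\delta\nabla\varphi_\delta\cdot\nabla(\zeta'\overline{\varphi_\delta})$.

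Your single field $q=(x_1,\zeta(x_2))$ is the Section~\ref{S4} multiplier $H=x$ with $x_2$ replaced by $\zeta(x_2)$. It gives $q\cdot\nu=1$ on all three observed sides simultaneously. Your symmetric $\frac12\operatorname{div}q$ correction makes the time part exactly $-\frac12\operatorname{Im}\bigl[\int_{\Omega_\delta}\varphi_\delta\,q\cdot\nabla\overline{\varphi_\delta}\,dx\bigr]_0^T$. So one identity suffices and the equation is never reused. The paper's localized version, in exchange, bounds each side separately, and its lateral multipliers never differentiate the weight at all.

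The one thing you leave implicit is that the term coming from $\nabla(\operatorname{div}q)=(0,\zeta'')$ requires $|\zeta''|\le C$. This holds for your fixed smooth $\zeta$, and the paper states it as an assumption.
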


\begin{proof}
	We divide the proof into three steps.
	
	{\it Step 1}. We estimate on $\Gamma_{\delta,1}^{-1}$.
	
	Let $\zeta=\zeta(x_1)\in C^\infty(\mathbb R)$ satisfy
	$$
	0\leq \zeta\leq 1,\quad 
	\zeta=1\mbox{ on }(-\infty,0),\quad
	\zeta=0\mbox{ on }(1,+\infty),\quad 
	|\zeta'|\leq C \mbox{ and } |\zeta''|\leq C\mbox{ on }\R, 
	$$
	where the positive constants $C$ are absolute. 
	
	Multiplying \eqref{09.16.27} by
	$$
	-\zeta\overline{\partial_{x_1}\vp_\de} -\zeta' \ol\vp_\de, 
	$$
	we integrate over $Q_\de$, then, 
	\begin{equation*}
		\begin{split} 
		0
		&=\iint_{Q_\de} \im \pt_t\vp_\de \left(-\zeta \ol{\pt_{x_1}\vp_\de}-\zeta'\ol\vp_\de\right)\df x\df t-\iint_{Q_\de}\Div(A_\de\nabla\vp_\de)\left( -\zeta \ol{\pt_{x_1}\vp_\de}-\zeta'\ol\vp_\de\right)\df x\df t\\
		&=:A_1+A_2. 
		\end{split} 
	\end{equation*}
	
	By integration by parts in time, we have
	\begin{equation*}
		\begin{aligned}
			A_1
			&=-\im \iint_{Q_\de} \zeta \ol{\pt_t\vp_\de}\pt_{x_1}\vp_\de \df x\df t-\im \int_{\Om_\de} \zeta \vp_\de \ol{\pt_{x_1}\vp_\de}\df x\bigg|_{t=0}^{t=T}\\
			&\hspace{4.5mm}-\im \iint_{Q_\de}\zeta' \ol{\pt_t \vp_\de}  \vp_\de\df x\df t-\im \iint_{Q_\de} \zeta' \ol\vp_\de \pt_t\vp_\de\df x\df t. 
		\end{aligned}
	\end{equation*}
	Then 
	\begin{equation*}
		\begin{split}
			&-\im\iint_{Q_\de} \zeta \pt_t\vp_\de \ol{\pt_{x_1}\vp_\de} \df x\df t-\im \iint_{Q_\de} \zeta'\ol\vp_\de \pt_t\vp_\de \df x\df t\\
			&+\im\iint_{Q_\de} \zeta \ol{\pt_t\vp_\de} \pt_{x_1}\vp_\de\df x\df t+\im\iint_{Q_\de} \zeta'\ol{\pt_t\vp_\de} \vp_\de \df x\df t\\
			&=-\im \int_{\Om_\de} \zeta \vp_\de \ol{\pt_{x_1}\vp_\de}\df x\bigg|_{t=0}^{t=T}-\im \iint_{Q_\de} \zeta'\ol\vp_\de \pt_t\vp_\de\df x\df t, 
		\end{split}
	\end{equation*}
	and hence
	\begin{equation*}
		\begin{split}
			2\Real A_1
			&=-\im \int_{\Om_\de} \zeta\vp_\de \ol{\pt_{x_1}\vp_\de}\df x\bigg|_{t=0}^{t=T}-\im \iint_{Q_\de}\zeta'\ol\vp_\de\pt_t\vp_\de\df x\df t
		\end{split}
	\end{equation*}
	Note that 
	\begin{equation*}
		\begin{split}
			-\im \iint_{Q_\de} \zeta' \ol\vp_\de \pt_t\vp_\de\df x\df t
			&=-\iint_{Q_\de} \zeta'\ol\vp_\de \Div(A_\de\nabla\vp_\de)\df x\df t= \iint_{Q_\de}A_\de\nabla\vp_\de \cdot \nabla(\zeta'\ol\vp_\de)\df x\df t, 
		\end{split}
	\end{equation*}
	therefore, 
	\begin{equation}\label{09.20.1}
		2\Real A_1=-\im \int_{\Om_\de} \zeta\vp_\de\ol{\pt_{x_1}\vp_\de}\df x\bigg|_{t=0}^{t=T}+\iint_{Q_\de} A_\de\nabla\vp_\de\cdot \nabla(\zeta'\ol\vp_\de)\df x\df t. 
	\end{equation}
	
	By integration by parts, we have 
	\begin{equation*}
		\begin{split}
			A_2
			&=\iint_{\Sigma_\de }\zeta (A_\de\nabla\vp_\de\cdot \nu) \ol{\pt_{x_1}\vp_\de}\df S\df t-\iint_{Q_\de} (A_\de\nabla\vp\cdot \nabla\zeta) \ol{\pt_{x_1}\vp_\de}\df x\df t\\
			&\hspace{4.5mm}-\iint_{Q_\de} A_\de\nabla\vp_\de \cdot \nabla (\zeta'\ol\vp_\de)\df x\df t-\iint_{Q_\de}\zeta A_\de\nabla\vp_\de \cdot \nabla(\ol{\pt_{x_1}\vp_\de})\df x\df t, 
		\end{split}
	\end{equation*}
	and 
	\begin{equation*}
		\begin{split}
			&2\Real \iint_{Q_\de}\zeta A_\de\nabla\vp_\de \cdot \nabla(\ol{\pt_{x_1}\vp_\de})\df x\df t\\
			&=\iint_{Q_\de}\zeta A_\de\nabla\vp_\de \cdot \nabla(\ol{\pt_{x_1}\vp_\de})\df x\df t+\iint_{Q_\de}\zeta A_\de\nabla \ol\vp_\de \cdot \nabla({\pt_{x_1}\vp_\de})\df x\df t\\
			&=-\iint_{\Ga_{\de,1}^{-1}\ts (0,T)} \zeta \nabla \vp_\de\cdot A_\de\nabla\ol \vp_\de\df S\df t-\iint_{Q_\de} \zeta' \nabla\vp_\de \cdot A_\de\nabla\ol \vp_\de\df x\df t, 
		\end{split}
	\end{equation*}
	then,  from (see Lemma \ref{09.17.L1})
	\begin{equation}\label{09.20.3}
		\begin{split}
			\pt_{x_1}\vp_\de
			&=\sum_{n,k=1}^\iy e^{\im \la_{nk}^\de t} \vp_{\de,nk}^0 \pt_{x_1}\Phi_n^{(1)}\Phi_{\de,k}^{(2)}=0 \mbox{ on } \Ga_{\de,2}^\de\cup \Ga_{\de,2}^1, \\
			\pt_{x_2}\vp_\de 
			&=\sum_{n,k=1}^\iy e^{\im \la_{nk}^\de t}\vp_{\de,nk}^0 \Phi_n^{(1)}\pt_{x_2}\Phi_{\de,k}^{(2)}=0 \mbox{ on }\Ga_{\de,1}^{-1}\cup \Ga_{\de,1}^{1}, 
		\end{split}
	\end{equation}   
	we obtain  $\nabla\vp_\de\cdot A_\de\nabla\ol\vp_\de=|\pt_{x_1}\vp_\de|^2$ on $\Ga_{\de,1}^{-1}\ts (0,T)$ and $\nabla\vp_\de \cdot A_\de\nabla\ol \vp_\de=0$ on $\Sigma_\de-[\Ga_{\de,1}^{-1}\ts (0,T)]$ and  
	\begin{equation}\label{09.20.2}
		\begin{split}
			2\Real A_2
			&=- \iint_{\Ga_{\de,1}^{-1}\ts (0,T)}|\pt_{x_1}\vp_\de |^2\df S\df t-2\Real \iint_{Q_\de} A_\de\nabla\vp_\de \cdot \nabla (\zeta'\ol\vp_\de)\df x\df t\\
			&\hspace{4.5mm}-2\Real \iint_{Q_\de} (A_\de\nabla\vp_\de\cdot \nabla\zeta) \ol{\pt_{x_1}\vp_\de}\df x\df t+\iint_{Q_\de} \zeta' \nabla\vp_\de \cdot A_\de\nabla\ol \vp_\de\df x\df t.  
		\end{split}
	\end{equation}
	
	Combining \eqref{09.20.1} and \eqref{09.20.2} and Lemma \ref{09.20.L1}  and 
	\begin{equation*}
		\f{\pt\vp_\de}{\pt\nu_{A_\de}}=\nabla\vp_\de \cdot A_\de\nu=-\pt_{x_1}\vp_\de, 
	\end{equation*}
	we obtain 
	\begin{equation*}
		\begin{split}
			\left\|\f{\pt\vp_\de}{\pt\nu_{A_\de}}\right\|_{L^2(\Ga_{\de,1}^{-1}\ts (0,T))}^2
			&\leq C\left(\|\vp_\de(T)\|_{H_0^1(\Om_\de;w_\de)}^2+\|\vp_\de(0)\|_{H_0^1(\Om_\de;w_\de)}^2+\|\vp_\de\|_{L^2(0,T; H_0^1(\Om_\de;w_\de))}^2\right)\\
			&\leq C(1+T)\|\vp^0\|_{H_0^1(\Om_\de;w_\de)}^2=C(1+T)\|\vp^0\|_{H_0^1(\Om;w)}^2, 
		\end{split}
	\end{equation*}
	where the positive constants $C$ depends only on $\al$.  
	
	\textit{Step 2}. We estimate the boundary term on $\Gamma_{\delta,1}^{1}$.
	The estimate follows from the same argument as in Step 1, with the change of
	variable $x_1\mapsto -x_1$.
	
	{\it Step 3}. We estimate on $\Ga_{\de,2}^1$. 
	
	Let $\zeta=\zeta(x_2)\in C^\iy(\R)$ satisfying 
	\begin{equation*}
		\begin{split}
			0\leq \zeta \leq 1, \quad \zeta=1 \mbox{ on }\left(\f{1}{2}, +\iy\right),\quad \zeta=0 \mbox{ on } \left(-\iy, \f{1}{4}\right), \quad |\zeta'|\leq C\mbox{ and }|\zeta''|\leq C\mbox{ on }\R, 
		\end{split}
	\end{equation*}
	where the positive constants $C$ are absolute.

	Multiplying \eqref{09.16.27} by 
	\begin{equation*}
		-\zeta\ol{\pt_{x_2}\vp_\de}-\zeta'\ol\vp_\de, 
	\end{equation*}
	we integrate over $Q_\de$, then 
	\begin{equation*}
		\begin{split}
			0
			&=\iint_{Q_\de} \im \pt_t\vp_\de (-\zeta \ol{\pt_{x_2}\vp_\de}-\zeta'\ol\vp_\de)\df x\df t-\iint_{Q_\de} \Div(A_\de\nabla\vp_\de)(-\zeta \ol{\pt_{x_2}\vp_\de}-\zeta'\ol\vp_\de)\df x\df t\\
			&=: B_1+B_2. 
		\end{split}
	\end{equation*}
	
	By integration by parts in time, we have
	\begin{equation*}
		\begin{split}
			B_1
			&=-\im \iint_{Q_\de} \zeta \ol{\pt_t\vp_\de} \pt_{x_2}\vp_\de \df x\df t-\im \int_{\Om_\de} \zeta\vp_\de \ol{\pt_{x_2}\vp_\de}\df x\bigg|_{t=0}^{t=T}\\
			&\hspace{4.5mm} -\im\iint_{Q_\de} \zeta' \vp_\de \ol{\pt_t\vp_\de} \df x\df t-\im \iint_{Q_\de} \zeta' \ol\vp_\de \pt_t\vp_\de \df x\df t, 
		\end{split}
	\end{equation*}
	then 
	\begin{equation}\label{09.20.4}
		\begin{split}
			2\Real B_1
			&=-\im \iint_{Q_\de}\zeta  \pt_t\vp_\de  \ol{\pt_{x_2}\vp_\de}\df x\df t-\im \iint_{Q_\de} \zeta'\ol\vp_\de\pt_t\vp_\de\df x\df t\\
			&\hspace{4.5mm}+\im\iint_{Q_\de} \zeta \ol{\pt_t\vp_\de} \pt_{x_2}\vp_\de\df x\df t+\im \iint_{Q_\de} \zeta' \vp_\de \ol{\pt_t\vp_\de}\df x\df t\\
			&=-\im \int_{\Om_\de} \zeta \vp_\de \ol{\pt_{x_2}\vp_\de}\df x\bigg|_{t=0}^{t=T}-\im\iint_{Q_\de} \zeta' \ol\vp_\de \pt_t\vp_\de \df x\df t\\
			&=-\im \int_{\Om_\de} \zeta \vp_\de \ol{\pt_{x_2}\vp_\de}\df x\bigg|_{t=0}^{t=T}+\iint_{Q_\de} A_\de\nabla\vp_\de \cdot \nabla (\zeta'\ol\vp_\de)\df x\df t. 
		\end{split}
	\end{equation}
	
	By integration by parts, we have 
	\begin{equation*}
		\begin{split}
			B_2
			&=\iint_{\Sigma_\de } \zeta(A_\de\nabla\vp_\de\cdot \nu)\ol{\pt_{x_2}\vp_\de}\df S\df t-\iint_{Q_\de} (A_\de\nabla\vp_\de\cdot \nabla\zeta)\ol{\pt_{x_2}\vp_\de}\df x\df t\\
			&\hspace{4.5mm}-\iint_{Q_\de}A_\de\nabla\vp_\de\cdot \nabla(\zeta'\ol\vp_\de)\df x\df t-\iint_{Q_\de}\zeta (A_\de\nabla\vp_\de\cdot \nabla \ol{\pt_{x_2}\vp_\de})\df x\df t, 
		\end{split}
	\end{equation*}
	and 
	\begin{equation*}
		\begin{split}
			&2\Real \iint_{Q_\de}\zeta (A_\de\nabla\vp_\de\cdot \nabla \ol{\pt_{x_2}\vp_\de})\df x\df t\\
			&=\iint_{Q_\de} \zeta (A_\de\nabla\vp_\de\cdot \nabla \ol{\pt_{x_2}\vp_\de})\df x\df t+\iint_{Q_\de} \zeta (A_\de \nabla\ol\vp_\de\cdot \nabla\pt_{x_2}\vp_\de)\df x\df t\\
			&=\iint_{\Ga_{\de,2}^1\ts (0,T)}\zeta A_\de\nabla\vp_\de\cdot \nabla\ol \vp_\de\df S\df t-\iint_{Q_\de} \zeta' x_2^\al |\pt_{x_2}\vp_\de|^2\df x\df t-\al \iint_{Q_\de}\zeta   x_2^{\al-1} |\pt_{x_2}\vp_\de|^2\df x\df t-\iint_{Q_\de}\zeta'
\left|\partial_{x_1}\vp_\de\right|^2\,dxdt., 
		\end{split}
	\end{equation*}
	therefore, from \eqref{09.20.3}, we obtain 
	\begin{equation}\label{09.20.5}
		\begin{split}
			2\Real B_2
			&=\iint_{\Ga_{\de,2}^1\ts (0,T)}|\pt_{x_2}\vp_\de|^2\df S\df t-2\Real \iint_{Q_\de}A_\de \nabla\vp_\de\cdot \nabla(\zeta' \ol\vp_\de)\df x\df t\\
			&\hspace{4.5mm}-2\Real \iint_{Q_\de} (A_\de\nabla\vp_\de\cdot \nabla\zeta)\ol{\pt_{x_2}\vp_\de}\df x\df t+\iint_{Q_\de} \zeta' x_2^\al |\pt_{x_2}\vp_\de|^2\df x\df t\\
			&\hspace{4.5mm}+\al \iint_{Q_\de} \zeta x_2^{\al-1}|\pt_{x_2}\vp_\de|^2\df x\df t-\iint_{Q_\de}\zeta'
\left|\partial_{x_1}\vp_\de\right|^2\,dxdt.. 
		\end{split}
	\end{equation}
	
	Combining \eqref{09.20.4} and \eqref{09.20.5} and $A_\de\nabla\vp_\de\cdot\nu=\pt_{x_2}\vp_\de$ on $\Ga_{\de,2}^1$, and Lemma \ref{09.20.L1}, we get
	\begin{equation*}
		\begin{split}
			\left\|\f{\pt\vp_\de}{\pt\nu_{A_\de}}\right\|_{L^2(\Ga_{\de,2}^1\ts (0,T))}^2
			&\leq C\left(\|\vp_\de(0)\|_{H_0^1(\Om_\de;w_\de)}^2+\|\vp_\de(T)\|_{H_0^1(\Om_\de;w_\de)}^2+\|\vp_\de\|_{L^2(0,T; H_0^1(\Om_\de;w_\de))}^2\right), 
		\end{split}
	\end{equation*}
	where the positive constant $C$ depends only on $\al$ and $T$. 
	
	{\it Step 4}. From Steps 1-3, we get the proof. 
\end{proof}

\begin{corollary}\label{09.20.C1}
	Let $\vp^0\in C_0^\iy(\Om)$ and $\vp$ be the solution defined in \eqref{09.16.11}. Then 
	\begin{equation*}
		\left\|\f{\pt\vp}{\pt \nu_{A}}\right\|_{L^2((\pt\Om-\ol{\Ga_{2}^0})\ts (0,T))}\leq C\|\vp^0\|_{H_0^1(\Om;w)}, 
	\end{equation*}
	where the positive constant $C$ depends only on $\al$ and $T$.  
\end{corollary}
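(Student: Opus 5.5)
The plan is to obtain Corollary \ref{09.20.C1} by passing to the limit $\delta\to0^+$ in Lemma \ref{09.19.L3}, using the convergence of normal derivatives in Theorem \ref{09.17.T2}. Fix $\vp^0\in C_0^\iy(\Om)$ and let $\de_0$ be given by \eqref{09.17.6}. For every $\de\in(0,\de_0)$ we have $\vp^0\in C_0^\iy(\Om_\de)$, so $\vp_\de$ is the solution \eqref{09.16.28} of \eqref{09.16.27} with $f_\de=0$. Lemma \ref{09.19.L3} then gives
\begin{equation*}
\left\|\f{\pt\vp_\de}{\pt \nu_{A_\de}}\right\|_{L^2((\pt\Om_\de-\ol{\Ga_{\de,2}^\de})\ts (0,T))}\leq C\|\vp^0\|_{H_0^1(\Om;w)},
\end{equation*}
and the constant $C$ depends only on $\al$ and $T$, not on $\de$.

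Next I would translate this into an estimate on the fixed set $\pt\Om-\ol{\Ga_2^0}$. On $\Ga_1^{\pm1}$ the zero extension $E_\de^0\vp_\de$ coincides with $\vp_\de$ on $\{\pm1\}\ts(\de,1)$ and vanishes on $\{\pm1\}\ts(0,\de)$. Its conormal derivative $\mp\pt_{x_1}$ therefore equals $\f{\pt\vp_\de}{\pt\nu_{A_\de}}$ on $\Ga_{\de,1}^{\pm1}$ and is zero elsewhere. This is exactly how it is interpreted in Steps 3--4 of the proof of Theorem \ref{09.17.T2}, where the series are expanded with $E_\de^0\Phi_{\de,k}^{(2)}$. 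On $\Ga_2^1=\Ga_{\de,2}^1$ the two conormal derivatives are identical. Hence
\begin{equation*}
\left\|\f{\pt E_\de^0\vp_\de}{\pt\nu_A}\right\|_{L^2((\pt\Om-\ol{\Ga_2^0})\ts(0,T))}=\left\|\f{\pt\vp_\de}{\pt \nu_{A_\de}}\right\|_{L^2((\pt\Om_\de-\ol{\Ga_{\de,2}^\de})\ts (0,T))}\leq C\|\vp^0\|_{H_0^1(\Om;w)}.
\end{equation*}

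Finally, by \eqref{09.17.8} in Theorem \ref{09.17.T2}, there is a sequence $\de_j\to0^+$ along which $\f{\pt E_{\de_j}^0\vp_{\de_j}}{\pt\nu_A}\to\f{\pt\vp}{\pt\nu_A}$ strongly in $L^2((\pt\Om-\Ga_2^0)\ts(0,T))$. The set $\ol{\Ga_2^0}$ differs from $\Ga_2^0$ only by the two corner points, which have surface measure zero, so the two $L^2$ spaces coincide. Norms are continuous under strong convergence, so letting $j\to\iy$ in the uniform bound yields the claimed inequality with the same constant $C$.

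The only delicate point is the identification in the second step: the conormal derivative of the zero extension on the lateral sides is taken as the piecewise function described above, not in any distributional sense across the jump at $x_2=\de$. I will state this convention explicitly and note that it matches the series representation used in Theorem \ref{09.17.T2}. Everything else is a direct combination of Lemma \ref{09.19.L3} and Theorem \ref{09.17.T2}.
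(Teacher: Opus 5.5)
Your proposal is correct and is essentially the paper's proof: you take the $\de$-uniform bound of Lemma~\ref{09.19.L3}, transfer it to $E_\de^0\vp_\de$ on $\pt\Om-\ol{\Ga_2^0}$, combine it with the strong convergence (along a subsequence) of the conormal derivatives from Theorem~\ref{09.17.T2}, and pass to the limit in the norm. Your remarks on the zero-extension convention and on the measure-zero corner points only spell out what the paper uses implicitly; there is one harmless sign slip, since the conormal derivative on $\Ga_1^{\pm1}$ is $\pm\pt_{x_1}$, not $\mp\pt_{x_1}$, which does not affect the $L^2$ norms.
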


\begin{proof}
	By Theorem \ref{09.17.T2}, we have
	$$
	\frac{\partial E_\delta^0\vp_\delta}{\partial\nu_A}
	\to
	\frac{\partial\vp}{\partial\nu_A} \mbox{ strongly in }
	L^2((\partial\Omega-\Gamma_2^0)\times(0,T)).
	$$
	Moreover, Lemma \ref{09.19.L3} gives the uniform estimate
	$$
	\left\|
	\frac{\partial E_\delta^0\vp_\delta}{\partial\nu_A}
	\right\|_
	{L^2((\partial\Omega-\Gamma_2^0)\times(0,T))}
	\leq
	C\|\vp^0\|_{H_0^1(\Omega;w)},
	$$
	where $C$ is independent of $\delta$. 
	Passing to the limit as $\delta\to0^+$, we obtain
	$$
	\left\|
	\frac{\partial\vp}{\partial\nu_A}
	\right\|_
	{L^2((\partial\Omega-\overline{\Gamma_2^0})\times(0,T))}
	\leq
	C\|\vp^0\|_{H_0^1(\Omega;w)} .
	$$
	This proves the corollary.
\end{proof}

\begin{remark}\label{09.21.R1}
	The estimate in Corollary \ref{09.20.C1} cannot be obtained directly by the classical hidden regularity argument. Indeed, due to the boundary degeneracy of the operator, the domain of the degenerate elliptic operator does not generally provide the $H^2(\Omega)$-regularity required for the classical Rellich multiplier method. Therefore, the integration by parts procedure leading to the boundary trace estimate cannot be directly justified for the weak solution of the degenerate problem. Instead, we derive the estimate by considering a family of uniformly elliptic approximations and passing to the limit through the convergence result established in Theorem \ref{09.17.T2}. \end{remark}

\section{Observability}\label{S4}

\subsection{Observability for uniformly approximations}

In this subsection, we consider the case $\delta\in(0,\de_0)$,
$\vp^0\in C_0^\infty(\Omega)$, and $f_\delta=0$, where $\delta_0>0$
is defined in \eqref{09.17.6}. The weak solution of
\eqref{09.16.27} corresponding to the initial data $(\vp^0,0)$ is
defined by Remark \ref{09.16.R2} or equivalently by the spectral
representation \eqref{09.16.29}. 

By Lemma \ref{09.17.L1}, the weak solution $\vp_\delta$ possesses the
required regularity. Therefore, the integration by parts arguments
below are justified as in the classical uniformly elliptic case.

Let
$$
H=x
$$
be the vector field. Define
$$
\Gamma_\de^+=\{x\in\partial\Omega_\delta:H\cdot\nu>0\},
\quad
\Sigma_\de^+=\Gamma_\de^+\times(0,T).
$$
Then
\begin{equation}\label{09.21.2}
\Gamma_\de^+=\partial\Omega_\delta-\overline{\Gamma_{\delta,2}^{\delta}}, \mbox{ and } 
H\cdot\nu=1 \mbox{ on }\Gamma_\de^+.
\end{equation}

Multiplying \eqref{09.16.27} by
$$
H\cdot\nabla\overline{\vp_\delta}+\overline{\vp_\delta},
$$
and integrating over $Q_\delta$, we obtain
\begin{equation*}
	\begin{split}
		0
		&=\im\iint_{Q_\de} \pt_t\vp_\de\left(H\cdot \nabla\ol\vp_\de+\ol\vp_\de\right)\df x\df t-\iint_{Q_\de}[\Div(A_\de\nabla\vp_\de)]\left( H\cdot \nabla\ol\vp_\de+\ol\vp_\de\right)\df x\df t\\
		&=:S_1+S_2. 
	\end{split}
\end{equation*}

From integration by parts, we obtain 
\begin{equation*}
	\begin{split}
		&2\im \iint_{Q_\de}\pt_t\vp_\de (H\cdot \nabla\ol\vp_\de)\df x\df t-2\im \iint_{Q_\de} \pt_t\ol\vp_\de (H\cdot \nabla\vp_\de)\df x\df t\\
		&=-2\Image \int_{\Om_\de} \vp_\de (H\cdot \nabla\ol\vp_\de)\df x\bigg|_{t=0}^{t=T}+2\im \iint_{Q_\de}\vp_\de \pt_t\ol\vp_\de\df x\df t-2\im \iint_{Q_\de}\ol\vp_\de \pt_t\vp_\de\df x\df t, 
	\end{split}
\end{equation*}
and then 
\begin{equation}\label{09.21.1}
	\begin{split}
		2\Real S_1
		&=\im \iint_{Q_\de} \pt_t\vp_\de (H\cdot \nabla\ol \vp_\de)\df x\df t-\im \iint_{Q_\de} \pt_t\ol\vp_\de (H\cdot \nabla\vp_\de)\df x\df t\\
		&\hspace{4.5mm}+\im \iint_{Q_\de} (\pt_t\vp_\de)\ol\vp_\de\df x\df t-\im \iint_{Q_\de} (\pt_t\ol\vp_\de) \vp_\de\df x\df t\\
		&=-\Image \int_{\Om_\de} \vp_\de (H\cdot \nabla\ol\vp_\de)\df x\bigg|_{t=0}^{t=T}. 
	\end{split}
\end{equation}

Since 
\begin{equation*}
	\begin{split}
		&\iint_{Q_\de}A_\de\nabla\vp_\de\cdot [(D^2\ol\vp_\de)H]\df x\df t+\iint_{Q_\de} A_\de\nabla\ol\vp_\de\cdot [(D^2\vp_\de)H]\df x\df t\\
		&=\iint_{\Sigma_\de}(A_\de\nabla\vp_\de\cdot \nabla\ol\vp_\de) (H\cdot\nu)\df S\df t-2\iint_{Q_\de} A_\de\nabla\vp_\de\cdot \nabla\ol\vp_\de\df x\df t-\al\iint_{Q_\de}x_2^\al |\pt_{x_2}\vp_\de|^2\df x\df t, 
	\end{split}
\end{equation*}
we obtain 
\begin{equation*}
	\begin{split}
		&-\iint_{Q_\de} [\Div(A_\de\nabla\vp_\de)](H\cdot \nabla\ol\vp_\de)\df x\df t-\iint_{Q_\de} [\Div(A_\de\nabla\ol \vp_\de)](H\cdot \nabla\vp_\de)\df x\df t\\
		&=-\iint_{\Sigma_\de} (A_\de\nabla\vp_\de\cdot\nu)(H\cdot\nabla\ol\vp_\de)\df S\df t-\iint_{\Sigma_\de} (A_\de\nabla\ol\vp_\de\cdot\nu)(H\cdot\nabla\vp_\de)\df S\df t\\
		&\hspace{4.5mm}+\iint_{\Sigma_\de}(A_\de\nabla\vp_\de\cdot \nabla\ol\vp_\de) (H\cdot\nu)\df S\df t-\al \iint_{Q_\de} x_2^\al |\pt_{x_2}\vp_\de|^2\df x\df t\\
		&=-\iint_{\Sigma_\de}\left|\f{\pt\vp_\de}{\pt\nu}\right|^2(A_\de\nu\cdot\nu)(H\cdot\nu)\df S\df t-\al\iint_{Q_\de}x_2^\al |\pt_{x_2}\vp_\de|^2\df x\df t
	\end{split}
\end{equation*}
by $\nabla\vp_\de=(\nabla\vp_\de\cdot\nu)\nu$ on $\Sigma$. Hence, from 
\begin{equation*}
	\begin{split}
		&-\iint_{Q_\de}[\Div(A_\de\nabla\vp_\de)]\ol\vp_\de \df x\df t-\iint_{Q_\de} [\Div(A_\de\nabla\ol\vp_\de)]\vp_\de\df x\df t =2\iint_{Q_\de} A_\de \nabla\vp_\de\cdot \nabla\ol\vp_\de\df x\df t, 
	\end{split}
\end{equation*}
we get 
\begin{equation*}
	\begin{split}
		2\Real S_2
		&=-\iint_{\Sigma_\de}\left|\f{\pt\vp_\de}{\pt\nu}\right|^2(A_\de\nu\cdot\nu)(H\cdot\nu)\df S\df t\\
		&\hspace{4.5mm}+2\iint_{Q_\de} A_\de \nabla\vp_\de\cdot \nabla\ol\vp_\de\df x\df t-\al\iint_{Q_\de}x_2^\al |\pt_{x_2}\vp_\de|^2\df x\df t. 
	\end{split}
\end{equation*}
Combining this and \eqref{09.21.1}, and Lemma \ref{09.20.L1}, and $H\cdot \nu=-\de$ on $\Ga_{\de,2}^\de$ and \eqref{09.21.2}, and $A_\de\nu\cdot\nu=1$ on $\Ga_{\de}^+$ and $A_\de\nu\cdot\nu=\de^\al$ on $\Ga_{\de,2}^\de$, and \eqref{09.15.3}, we obtain 
\begin{equation*}
	\begin{split}
		(2-\al)T\|\vp_\de(0)\|_{H_0^1(\Om_\de;w_\de)}^2
		&\leq \Image \int_{\Om_\de} \vp_\de (H\cdot \nabla\ol\vp_\de)\df x\bigg|_{t=0}^{t=T}+\iint_{\Sigma_\de}\left|\f{\pt\vp_\de}{\pt\nu}\right|^2(A_\de\nu\cdot\nu)(H\cdot\nu)\df S\df t\\
		&\leq \iint_{\Sigma_\de^+}\left|\f{\pt\vp_\de}{\pt\nu}\right|^2\df S\df t+\|\vp_\de(0)\|_{L^2(\Om_\de)}^2+2\|\vp_\de(0)\|_{H_0^1(\Om_\de;w_\de)}^2\\
		&\leq \iint_{\Sigma_\de^+}\left|\f{\pt\vp_\de}{\pt\nu}\right|^2\df S\df t+C\|\vp_\de(0)\|_{H_0^1(\Om_\de;w_\de)}^2, 
	\end{split}
\end{equation*}
therefore, 
\begin{equation*}
	\begin{split}
		\left[(2-\al)T-C\right]\|\vp_\de^0\|_{H_0^1(\Om_\de;w_\de)}^2\leq \iint_{\Sigma_\de^+}\left|\f{\pt\vp_\de}{\pt\nu}\right|^2\df S\df t, 
	\end{split}
\end{equation*}
where the positive constant $C$ depends only on $\al$. Therefore, if 
\begin{equation}\label{09.21.5}
T>T_0:=\frac{C}{2-\alpha},
\end{equation} 
we obtain the following observability estimate.

\begin{theorem}\label{09.21.T1}
	Let $\delta\in(0,\delta_0)$ and $\vp^0\in C_0^\infty(\Omega)$.
	Let $\vp_\delta$ be the weak solution of \eqref{09.16.27}
	with respect to  $(\vp^0,0)$, whose representation is given by
	\eqref{09.16.29}. Then there exists a positive constant $C$,
	depending only on $\alpha$, such that for every
	$$
	T>T_0,
	$$
	one has
	\begin{equation}\label{09.21.3}
		\left[(2-\alpha)T-C\right]
		\|\vp_\delta^0\|_{H_0^1(\Omega_\delta;w_\delta)}^2
		\leq
		\iint_{\Sigma_\delta^+}
		\left|\frac{\partial\vp_\delta}{\partial\nu}\right|^2
		\,dSdt .
	\end{equation} 
\end{theorem}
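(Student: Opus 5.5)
The plan is to make rigorous the multiplier computation carried out just before the statement. First, regularity: since $\vp^0\in C_0^\infty(\Om_\de)$, Lemma~\ref{09.17.L1} and Lemma~\ref{09.19.L1} give $\vp_\de\in C([0,T];D(\mcA_\de))$, $\vp_\de\in L^2(0,T;H^2(\Om_\de))$ and $\pt_t\vp_\de\in C([0,T];L^2(\Om_\de))$. Hence the products $\pt_t\vp_\de\,(H\cdot\nabla\ol\vp_\de+\ol\vp_\de)$ and $\Div(A_\de\nabla\vp_\de)\,(H\cdot\nabla\ol\vp_\de)$ are integrable, and the Green formulas on the Lipschitz rectangle $\Om_\de$ are legitimate. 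If the time integration by parts needs more smoothness, I would first run it for finite spectral truncations of \eqref{09.16.29}, which are smooth, and then pass to the limit using the $D(\mcA_\de)$ bounds of Lemma~\ref{09.19.L1}.

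Second, the identities. Multiply by $H\cdot\nabla\ol\vp_\de+\ol\vp_\de$ with $H=x$ and take real parts, obtaining $2\Real S_1+2\Real S_2=0$. The time term \eqref{09.21.1} reduces to the boundary-in-time quantity $-\Image\int_{\Om_\de}\vp_\de(H\cdot\nabla\ol\vp_\de)\,dx\big|_0^T$. For the spatial term I would recheck the Rellich-type identity. The key computation is $\Div(H)=2$ together with $H\cdot\nabla(x_2^\al)=\al x_2^\al$, which produces the interior term $2\iint A_\de\nabla\vp_\de\cdot\nabla\ol\vp_\de-\al\iint x_2^\al|\pt_{x_2}\vp_\de|^2\ge(2-\al)\iint A_\de\nabla\vp_\de\cdot\nabla\ol\vp_\de$. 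The Dirichlet condition gives $\nabla\vp_\de=(\pt_\nu\vp_\de)\nu$ on $\Sigma_\de$, so the boundary contribution is $-\iint_{\Sigma_\de}|\pt_\nu\vp_\de|^2(A_\de\nu\cdot\nu)(H\cdot\nu)$. By \eqref{09.21.2} we have $H\cdot\nu=1$ and $A_\de\nu\cdot\nu=1$ on $\Ga_\de^+$. On $\Ga_{\de,2}^\de$ we have $H\cdot\nu=-\de<0$, so that part has a favorable sign and can simply be dropped.

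Third, conservation and absorption. By Lemma~\ref{09.20.L1}, $\iint_{Q_\de}A_\de\nabla\vp_\de\cdot\nabla\ol\vp_\de=T\|\vp^0\|^2_{H_0^1(\Om_\de;w_\de)}$. The time-boundary term I bound by Cauchy--Schwarz, using $|H|\le\sqrt2$ and $|\pt_{x_1}\vp|^2+|\pt_{x_2}\vp|^2\le\de^{-\al}(\cdots)$. This last bound is the main obstacle, because a $\de^{-\al}$ dependence would destroy uniformity. To avoid it, split $H\cdot\nabla\ol\vp=x_1\pt_{x_1}\ol\vp+x_2\pt_{x_2}\ol\vp$. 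The first piece is controlled by $\|\pt_{x_1}\vp\|$. For the second piece, $|x_2\pt_{x_2}\vp|\le x_2^{\al/2}|\pt_{x_2}\vp|$ since $x_2\le1$ and $\al<2$, and the Poincar\'e inequality \eqref{09.15.3} on $\Om_\de$ (valid with the same constant by Remark~\ref{09.17.R1}) handles $\|\vp\|_{L^2}$. Together with Lemma~\ref{09.20.L1} this gives the bound $C(\al)\|\vp^0\|^2_{H_0^1(\Om_\de;w_\de)}$ at $t=0$ and $t=T$, with $C$ independent of $\de$. Collecting terms yields $[(2-\al)T-C]\|\vp^0_\de\|^2\le\iint_{\Sigma_\de^+}|\pt_\nu\vp_\de|^2$, and the left-hand side is positive exactly when $T>T_0$ as in \eqref{09.21.5}.
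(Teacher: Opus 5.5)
Your proposal is correct and follows the paper's argument essentially step for step. It uses the same multiplier $H\cdot\nabla\overline{\varphi_\delta}+\overline{\varphi_\delta}$ with $H=x$, the same Rellich-type identity giving the interior term $2\iint_{Q_\delta}A_\delta\nabla\varphi_\delta\cdot\nabla\overline{\varphi_\delta}\,dx\,dt-\alpha\iint_{Q_\delta}x_2^\alpha|\partial_{x_2}\varphi_\delta|^2\,dx\,dt$, the same dropping of the $\Gamma_{\delta,2}^\delta$ contribution by its sign, and the same closing via Lemma~\ref{09.20.L1} and the Poincar\'e inequality \eqref{09.15.3}. Your explicit bound $|x_2\partial_{x_2}\varphi_\delta|\le x_2^{\alpha/2}|\partial_{x_2}\varphi_\delta|$ is what the paper uses implicitly to get the $\delta$-independent bound $\|\varphi_\delta(0)\|_{L^2(\Omega_\delta)}^2+2\|\varphi_\delta(0)\|_{H_0^1(\Omega_\delta;w_\delta)}^2$ on the time-boundary terms, and your spectral-truncation fallback is a valid, slightly cleaner justification of the integrations by parts.
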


\subsection{Observability for the original equation}

Define
\begin{equation*}
	\Ga^+=\{x\in\pt\Om\colon x\cdot\nu>0\}, \quad \Sigma^+=\Ga^+\ts (0,T). 
\end{equation*}
Then $\Ga^+=\pt\Om-\Ga_2^0,\mbox{ and } x\cdot \nu=1 \mbox{ on }\Ga^+$. 

From Theorems \ref{09.21.T1} and \ref{09.17.T2}, we obtain the
following observability estimate for the degenerate Schrödinger
equation.

\begin{theorem}\label{09.21.T2}
	Let $\vp^0\in C_0^\infty(\Omega)$ and let $\vp$ be the weak solution
	of \eqref{09.15.1} with respect to $(\vp^0,0)$, whose representation
	is given by \eqref{09.16.11}. Then there exists a positive constant
	$C$, depending only on $\alpha$, such that for every
	$$
	T>T_0,
	$$
	one has
	\begin{equation}\label{09.21.4}
		\left[(2-\alpha)T-C\right]
		\|\vp^0\|_{H_0^1(\Omega;w)}^2
		\leq
		\iint_{\Sigma^+}
		\left|
		\frac{\partial\vp}{\partial\nu_A}
		\right|^2
		\,dSdt. 
	\end{equation} 
\end{theorem}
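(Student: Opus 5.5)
The plan is to pass to the limit $\delta\to0^+$ in the uniform observability estimate \eqref{09.21.3} of Theorem \ref{09.21.T1}, using the convergence results of Theorem \ref{09.17.T2}. The key point is that the constant $C$ in \eqref{09.21.3} depends only on $\alpha$ and not on $\delta$. Fix $\vp^0\in C_0^\infty(\Omega)$, $T>T_0$, and $\delta_0$ as in \eqref{09.17.6}. For every $\delta\in(0,\delta_0)$ the initial datum of \eqref{09.16.27} is $\vp^0|_{\Omega_\delta}=\vp^0$. Hence the left-hand side of \eqref{09.21.3} reads $[(2-\alpha)T-C]\|\vp^0\|_{H_0^1(\Omega_\delta;w_\delta)}^2$. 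Since $\supp\vp^0\subset\Omega_\delta$, Definition \ref{09.17.D2} gives
\[
\|\vp^0\|_{H_0^1(\Omega_\delta;w_\delta)}=\|\vp^0\|_{H_0^1(\Omega;w)},
\]
so the left-hand side is independent of $\delta$ and already equals the left-hand side of \eqref{09.21.4}.

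Next I identify the right-hand side with a boundary integral on the fixed set $\Sigma^+$. By \eqref{09.21.2}, $\Gamma_\delta^+=\Gamma_{\delta,1}^{-1}\cup\Gamma_{\delta,1}^1\cup\Gamma_{\delta,2}^1$. On these pieces $A_\delta\nu\cdot\nu=1$ and $\nabla\vp_\delta=(\partial_\nu\vp_\delta)\nu$, so $|\partial_\nu\vp_\delta|=|\partial\vp_\delta/\partial\nu_{A_\delta}|$. The zero extension $E_\delta^0\vp_\delta$ vanishes on $\{-1,1\}\times(0,\delta)$. Its conormal derivative therefore coincides with that of $\vp_\delta$ on $\Gamma_\delta^+$ and is zero on the remaining part of $\Gamma^+$. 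This gives
\[
\iint_{\Sigma_\delta^+}\Bigl|\frac{\partial\vp_\delta}{\partial\nu}\Bigr|^2\,dSdt=\Bigl\|\frac{\partial E_\delta^0\vp_\delta}{\partial\nu_A}\Bigr\|_{L^2(\Sigma^+)}^2 .
\]
This bookkeeping uses that the explicit formulas in the proof of Theorem \ref{09.17.T2} (Steps 3--5) are written exactly for this extended trace.

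Finally, I invoke \eqref{09.17.8} of Theorem \ref{09.17.T2}. Along a sequence $\delta_j\to0^+$, $\partial E_{\delta_j}^0\vp_{\delta_j}/\partial\nu_A\to\partial\vp/\partial\nu_A$ strongly in $L^2((\partial\Omega-\Gamma_2^0)\times(0,T))=L^2(\Sigma^+)$. The $L^2(\Sigma^+)$ norms therefore converge, and letting $j\to\infty$ in \eqref{09.21.3} yields \eqref{09.21.4} with the same constant $C$.

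The main obstacle is the uniformity of the constant in Theorem \ref{09.21.T1}. The multiplier computation produces the term $-\alpha\iint x_2^\alpha|\partial_{x_2}\vp_\delta|^2$, which is why the coefficient is $(2-\alpha)$. It also produces the boundary contribution on $\Gamma_{\delta,2}^\delta$, where $H\cdot\nu=-\delta$ and $A_\delta\nu\cdot\nu=\delta^\alpha$. That contribution has a favorable sign and is simply dropped. The time-boundary terms are controlled by \eqref{09.15.3} with a constant $4/(1-\alpha)^2$ that is independent of $\delta$, because $H_0^1(\Omega_\delta;w_\delta)$ embeds isometrically into $H_0^1(\Omega;w)$. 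I would double-check these two points before passing to the limit. Strong convergence in \eqref{09.17.8} is more than needed: weak convergence together with lower semicontinuity would go the wrong way here, so having norm convergence on the right-hand side is exactly what makes the argument work.
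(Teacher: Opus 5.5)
Your proposal is correct and follows the paper's route: the paper obtains Theorem \ref{09.21.T2} in one line from Theorems \ref{09.21.T1} and \ref{09.17.T2}, by letting $\delta\to0^+$ in \eqref{09.21.3}. This uses the $\delta$-independence of the constant, the identity $\|\vp^0\|_{H_0^1(\Omega_\delta;w_\delta)}=\|\vp^0\|_{H_0^1(\Omega;w)}$ for $\delta<\delta_0$, and the strong convergence \eqref{09.17.8} of the conormal traces on $\Sigma^+$. Your write-up makes explicit what the paper leaves implicit: the identification of $\Sigma_\delta^+$ with $\Sigma^+$ through the zero extension, and the fact that norm convergence rather than weak convergence is required.
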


From Theorem \ref{09.21.T2}, we obtain the following theorem. 

\begin{theorem}\label{09.21.T3}
	Let $\vp^0\in H_0^1(\Om;w)$ and let $\vp$ be the weak solution
	of \eqref{09.15.1} with respect to $(\vp^0,0)$, whose representation
	is given by \eqref{09.16.11}. Then there exists a positive constant
	$C$, depending only on $\alpha$, such that for every
	$$
	T>T_0,
	$$
	one has
	\begin{equation}\label{09.21.6}
		\left[(2-\alpha)T-C\right]
		\|\vp^0\|_{H_0^1(\Omega;w)}^2
		\leq
		\iint_{\Sigma^+}
		\left|
		\frac{\partial\vp}{\partial\nu_A}
		\right|^2
		\,dSdt, 
	\end{equation} 
	where the positive  constant $T_0$ is defined in \eqref{09.21.5}. 
\end{theorem}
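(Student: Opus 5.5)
The plan is a density argument. It extends Theorem \ref{09.21.T2} from $C_0^\infty(\Omega)$ to all of $H_0^1(\Omega;w)$. Both sides of \eqref{09.21.6} are continuous in $\vp^0$ for the $H_0^1(\Omega;w)$-norm, so the inequality passes to the limit.

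\emph{Step 1: approximation.} Fix $\vp^0\in H_0^1(\Omega;w)$. By definition, $H_0^1(\Omega;w)$ is the closure of $C_0^\infty(\Omega)$. We therefore choose $\vp^0_j\in C_0^\infty(\Omega)$ with $\vp^0_j\to\vp^0$ in $H_0^1(\Omega;w)$. Let $\vp_j$ and $\vp$ be the corresponding solutions given by \eqref{09.16.11}. Set $\psi=\vp_j-\vp_l$; by linearity it solves \eqref{09.15.1} with datum $\vp_j^0-\vp_l^0\in C_0^\infty(\Omega)$. Corollary \ref{09.20.C1} then gives
\begin{equation*}
\left\|\frac{\partial\vp_j}{\partial\nu_A}-\frac{\partial\vp_l}{\partial\nu_A}\right\|_{L^2(\Sigma^+)}\le C\|\vp_j^0-\vp_l^0\|_{H_0^1(\Omega;w)}.
\end{equation*}
Hence $\partial\vp_j/\partial\nu_A$ is Cauchy in $L^2(\Sigma^+)$ and has a limit $g$. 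We use that $\Sigma^+$ and $(\partial\Omega-\overline{\Gamma_2^0})\times(0,T)$ agree up to a null set.

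\emph{Step 2: identification of the limit.} This is the main obstacle. For a general weak solution, $\partial\vp/\partial\nu_A$ on $\Sigma^+$ has no a priori meaning, since Remark \ref{09.17.R3} says there is no $H^2$ regularity up to $\Gamma_2^0$. I would \emph{define} the normal derivative of $\vp$ on $\Sigma^+$ as the continuous extension $g$ of the bounded linear map $\vp^0\mapsto \partial\vp/\partial\nu_A|_{\Sigma^+}$ from $C_0^\infty(\Omega)$ (hidden regularity). The limit $g$ does not depend on the chosen sequence, by the same Cauchy estimate.

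This should agree with any local interpretation. Away from $\Gamma_2^0$, for instance on $\{x_2>1/4\}$, the operator is uniformly elliptic. By Theorem \ref{09.16.T1} and the spectral formula, $\vp_j\to\vp$ in $C([0,T];H_0^1(\Omega;w))$. Consequently $g$ coincides with the classical trace wherever the latter exists, via a local multiplier or trace argument of the same form as in Lemma \ref{09.19.L3}. Equivalently, one can use the series representation: on $\Gamma_1^{\pm1}$ and $\Gamma_2^1$ the trace series converge in $L^2$ for $H_0^1$ data by the Cauchy–Schwarz bounds of Lemma \ref{09.19.L2}, using \eqref{09.18.12}. I would present it this way, so that the identification is the series itself.

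\emph{Step 3: passage to the limit.} Apply \eqref{09.21.4} to each $\vp_j^0$:
\begin{equation*}
[(2-\alpha)T-C]\|\vp_j^0\|^2_{H_0^1(\Omega;w)}\le\iint_{\Sigma^+}\left|\frac{\partial\vp_j}{\partial\nu_A}\right|^2dSdt.
\end{equation*}
Let $j\to\infty$. The left side converges to $[(2-\alpha)T-C]\|\vp^0\|^2_{H_0^1(\Omega;w)}$, and the right side converges to $\|\partial\vp/\partial\nu_A\|^2_{L^2(\Sigma^+)}$ by Step 2. This gives \eqref{09.21.6} for every $T>T_0$, with the same constant $C$ depending only on $\alpha$. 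The constant $C$ in Corollary \ref{09.20.C1}, which depends on $T$, only enters the convergence argument and not the final constant.
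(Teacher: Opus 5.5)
Your skeleton is exactly the paper's: density of $C_0^\infty(\Omega)$, Corollary \ref{09.20.C1} applied to differences to get a Cauchy sequence of traces in $L^2(\Sigma^+)$, and passage to the limit in \eqref{09.21.4}. You are also right that the $T$-dependent constant of the corollary does not enter the final constant.

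The difference is how the limit is identified. The paper does not take the limit as a definition. It multiplies the equation for $\varphi_n$ by $\overline{\Psi}$, where $\Psi$ is supported near a single face of $\Gamma^+$ and away from $\overline{\Gamma_2^0}$ and from $t=0,T$. This gives the Green identity \eqref{09.21.8}, and the paper lets $n\to\infty$ using $\varphi_n\to\varphi$ in $L^\infty(0,T;H_0^1(\Omega;w))$. So the limit is shown to be the generalized conormal trace of Remark \ref{09.22.R1}, an object defined from $\varphi$ alone. Your definition of $\partial\varphi/\partial\nu_A$ as the continuous extension from $C_0^\infty(\Omega)$ data is legitimate and independent of the sequence, and with it Step 3 is immediate. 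But the statement is then only as meaningful as the link between $g$ and $\varphi$. The Green-identity limit supplies that link in a few lines; your appeal to ``a local multiplier or trace argument'' is too vague to do so.

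The justification you say you would actually present is incorrect: that the trace series converge in $L^2$ for $H_0^1$ data ``by the Cauchy--Schwarz bounds of Lemma \ref{09.19.L2}''. Those are fixed-time bounds in terms of $\|\mathcal A\varphi^0\|_{L^2(\Omega)}$, so they require $\varphi^0\in D(\mathcal A)$. If you only know $\sum_{n,k}\lambda_{nk}|\varphi^0_{nk}|^2<\infty$, the same splitting produces the weight $\sum_n\lambda_n^{(1)}/\lambda_{nk}=+\infty$ (and $\sum_k\lambda_k^{(2)}/\lambda_{nk}=+\infty$ on $\Gamma_2^1$). The series really can diverge at a fixed time. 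Take $\varphi^0_{n1}=n^{-3/2}(\log n)^{-1}$ for $n\ge 2$ and all other coefficients zero. Then $\varphi^0\in H_0^1(\Omega;w)$, but the trace series on $\Gamma_1^{-1}$ diverges at $t=0$. Convergence in $L^2(\Sigma^+)$ is a hidden-regularity effect that needs the time integration.

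The series route can be repaired. At fixed $k$ the phase $e^{\mathrm{i}\lambda_k^{(2)}t}$ drops out of the modulus. The direct Ingham inequality for the uniformly separated frequencies $\lambda_n^{(1)}=n^2\pi^2/4$ then gives $\|\partial_{\nu_A}\varphi\|_{L^2(\Gamma_1^{-1}\times(0,T))}^2\le C(T)\sum_{n,k}\lambda_n^{(1)}|\varphi^0_{nk}|^2$. On $\Gamma_2^1$ the same works with the gap \eqref{08.05.7} and the bound \eqref{09.18.12}, whose proof applies verbatim at $\delta=0$. That would be a genuinely different, multiplier-free proof of Corollary \ref{09.20.C1}, and it would also identify the trace with the series. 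It is not what Lemma \ref{09.19.L2} provides, however, and as written your Step 2 rests on a false estimate.
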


\begin{proof}
	We prove this theorem by the following steps. 
	
	{\it Step 1}. 
	Let $\vp^0\in H_0^1(\Om;w)$. Then there exists a sequence $\{\vp_n^0\}_{n\in\N^*}\s C_0^\iy(\Om)$ such that 
	\begin{equation}\label{09.21.11}
		\vp_n^0\ra \vp^0 \mbox{ strongly in }H_0^1(\Om;w). 
	\end{equation}
	
	Denote $\vp_n$ be the weak solution of \eqref{09.15.1} with respect to $(\vp_n^0,0)$, then, from Theorem \ref{09.21.T2}, there exists a positive constant $C$, depending only on $\al$, such that for every $T>T_0$ we have 
	\begin{equation}\label{09.21.10}
		[(2-\al)T-C]\|\vp_n^0\|_{H_0^1(\Om;w)}^2\leq \iint_{\Sigma^+}\left|\f{\pt\vp_n}{\pt \nu_A}\right|^2\df S\df t. 
	\end{equation}
	
	{\it Step 2}. From \eqref{09.16.4}, we obtain 
	\begin{equation*}
		\esssup_{t\in[0,T]}\|\vp_n(t)-\vp(t)\|_{H_0^1(\Om;w)}+\|\pt_t(\vp_n-\vp)\|_{L^2(0,T; H^{-1}(\Om;w))}\leq \|\vp_n^0-\vp^0\|_{H_0^1(\Om;w)}. 
	\end{equation*}
	Hence,   
	\begin{equation}\label{09.21.7}
		\begin{split}
			\vp_n\ra \vp \mbox{ strongly in } L^\iy(0,T; H_0^1(\Om;w)). 
		\end{split}
	\end{equation}
	
	Note that from Corollary \ref{09.20.C1} we obtain 
	\begin{equation*}
		\left\|\f{\pt (\vp_n-\vp_m)}{\pt\nu_A}\right\|_{L^2(\Ga^+\ts (0,T))}\leq C\|\vp_n^0-\vp_m^0\|_{H_0^1(\Om;w)}, 
	\end{equation*}
	where the positive constant $C$ depends only on $\al$ and $T$. Then there exists $z\in L^2(\Ga^+\ts (0,T))$ such that 
	\begin{equation}\label{09.21.9}
		\f{\pt \vp_n}{\pt \nu_A}\ra z\mbox{ strongly in } L^2(\Ga^+\ts (0,T)). 
	\end{equation}
	
		Now, we identify the limit $z$ with the conormal derivative of
	$\vp$. i.e., $z=\f{\pt\vp}{\pt \nu_A}$. 
	
	Let  $
	\psi\in C_0^\infty(\Ga_1^{-1}\times(0,T))$. 
	Then there exists $\Psi\in C^\infty(\overline{Q})$ such that
	\begin{equation*} 
			\Psi=\psi 
			\mbox{ on }\Ga_1^{-1}\times(0,T),\quad 
			\dist\left(\supp \Psi, 
			[(\partial\Om-\Ga_1^{-1})\times(0,T)]
			\cup [\Om\times\{0,T\}]\right)>2\de_0'>0. 
	\end{equation*}
	 
	Since $\vp_n|_{\Om_{\de_0'}\ts (0,T)}\in L^2(0,T; H^2(\Om_{\de_0'}))$  (using Step 1 in the proof of Lemma \ref{08.05.L2}, similar to the proof of Lemma \ref{09.17.L1}), multiplying the equation
	\eqref{09.15.1} by $\overline{\Psi}$ and integrating by parts,
	we obtain
	\begin{equation*}
		\begin{split}
			0
			&=
			\iint_Q
			\im \partial_t\vp_n\overline{\Psi}\,dxdt
			-
			\iint_Q
			\Div(A\nabla\vp_n)\overline{\Psi}\,dxdt\\
			&=
			-\im \iint_Q
			\vp_n\partial_t\overline{\Psi}\,dxdt
			+
			\iint_Q
			A\nabla\vp_n\cdot\nabla\overline{\Psi}\,dxdt 
			-
			\iint_{\partial\Om\times(0,T)}
			(A\nabla\vp_n\cdot\nu)
			\overline{\Psi}\,dSdt .
		\end{split}
	\end{equation*} 
	Therefore, 
	\begin{equation}\label{09.21.8}
		\begin{split}
			&
			\iint_{\Ga_1^{-1}\times(0,T)}
			\frac{\partial\vp_n}{\partial\nu_A}
			\overline{\psi}\,dSdt =
			\iint_Q
			A\nabla\vp_n\cdot\nabla\overline{\Psi}\,dxdt
			-
			\im \iint_Q
			\vp_n\partial_t\overline{\Psi}\,dxdt .
		\end{split}
	\end{equation} 
	Passing to the limit as $n\to\infty$, from
	\eqref{09.21.7}  and the convergence of the conormal derivatives  \eqref{09.21.9},
	we obtain
	\begin{equation}\label{09.22.2}
		\begin{split}
			&
			\iint_{\Ga_1^{-1}\times(0,T)}
			z\overline{\psi}\,dSdt =
			\iint_Q
			A\nabla\vp\cdot\nabla\overline{\Psi}\,dxdt
			-
			\im \iint_Q
			\vp\partial_t\overline{\Psi}\,dxdt .
		\end{split}
	\end{equation} 
	This identity allows us to identify \(z\) as the weak conormal derivative of  
	$\vp$ on $\Ga_1^{-1}\times(0,T)$. Hence,
	\begin{equation}\label{09.22.1}
		z=
		\frac{\partial\vp}{\partial\nu_A} \mbox{ on }\Ga_1^{-1}\times(0,T).
	\end{equation}

	Similarly, by taking $\psi\in C_0^\iy(\Ga_1^1\ts (0,T))$ and 
	$\psi\in C_0^\infty(\Ga_2^1\times(0,T))$,
	we obtain
	\begin{equation*}
		z=
		\frac{\partial\vp}{\partial\nu_A} \mbox{ on }[\Ga_1^1\cup \Ga_2^1]\times(0,T).
	\end{equation*}
	
	Consequently,
	\begin{equation*}
		z=
		\frac{\partial\vp}{\partial\nu_A} \mbox{ on }\Ga^+\times(0,T).
	\end{equation*}

	{\it Step 3}. Combining \eqref{09.21.11} and  \eqref{09.21.10} and \eqref{09.21.9}, we obtain \eqref{09.21.6}. We prove the theorem.  
\end{proof}

\begin{remark}\label{09.22.R1}
	Due to the boundary degeneracy of the diffusion matrix 
	$$
	A=\operatorname{diag}(1,x_2^\alpha),\quad \alpha\in(0,1),
	$$ 
	the weak solution of the degenerate Schrödinger equation may not possess sufficient regularity for the classical conormal derivative to be defined. Therefore, the boundary conormal derivative is introduced in the weak sense through the Green identity.
	
	Let $\psi$ be an admissible test function on
	$\Gamma_1^{-1}\times(0,T)$, and let $\Psi$ be an extension of $\psi$  into $Q$. We define $z$ by
	\begin{equation*}
		\begin{split}
			\iint_{\Gamma_1^{-1}\times(0,T)}
			z\overline{\psi}\df S\df t
			&=
			\iint_Q
			A\nabla\varphi\cdot\nabla\overline{\Psi}\df x\df t -
			\mathrm{i}\iint_Q
			\varphi\partial_t\overline{\Psi}\df x\df t .
		\end{split}
	\end{equation*}
	The above identity uniquely determines $z$ in the corresponding boundary space, and this definition is independent of the choice of the extension $\Psi$. The function $z$ is called the generalized conormal trace of $\varphi$ on $\Gamma_1^{-1}\times(0,T)$.
	
	For sufficiently regular solutions, this generalized trace coincides with the classical conormal derivative 
	$$
	z=(A\nabla\varphi)\cdot\nu
	$$ 
	on $\Gamma_1^{-1}\times(0,T)$. Hence, the above definition extends the classical boundary flux to the weak solution framework. Moreover, Corollary \ref{09.20.C1} gives the hidden boundary regularity of this generalized conormal trace. More precisely, 
	$$
	\frac{\partial\varphi}{\partial\nu_A}
	\in
	L^2\big((\partial\Omega-\overline{\Gamma_2^0})\times(0,T)\big),
	$$ 
	and 
	$$
	\left\|
	\frac{\partial\varphi}{\partial\nu_A}
	\right\|_{L^2((\partial\Omega-\overline{\Gamma_2^0})\times(0,T))}
	\leq
	C\|\varphi^0\|_{H_0^1(\Omega;w)}.
	$$ 
	Therefore, although the classical conormal derivative is not directly available for weak solutions of the degenerate Schrödinger equation, the generalized conormal trace is well-defined and provides the appropriate boundary observation for the observability inequality.
\end{remark}

\section{Appendix}\label{SA}

In this section, we give the proof of Theorems \ref{09.16.T1} and  \ref{09.17.T1}. 

\begin{proof}[Proof of Theorem \ref{09.16.T1}]
	We prove this theorem by the following steps. 
	
	{\it Step 1}. Galerkin method. 
	
	Let $m\in\N^*$. Define 
	\begin{equation}\label{09.16.5}
		\vp^m=\sum_{n=1}^m \vp_n^m(t) \Phi_n, \quad f^m=\sum_{n=1}^m f_n^m(t)\Phi_n \mbox{ with } f_n^m(t)=(f,\Phi_n)_{L^2(\Om)}, 
	\end{equation}
	where $\vp_n^m(t), n=1,\cdots, m$ is the solution of the ordinary differential equation 
	\begin{equation}\label{09.16.6}
		\im \pt_t\vp_n^m(t)+\la_n \vp_n^m(t)=f_n^m(t),   \ n=1,\cdots, m, \quad   \pt_t\vp_n^m\equiv \f{\df\vp_n^m}{\df t},
	\end{equation}
	with initial data
	\begin{equation}\label{09.16.7}
		\vp_n^m(0)=\vp_n^0, \mbox{ with } \vp_n^0=(\vp^0,\Phi_n)_{L^2(\Om)}, \ n=1,\cdots, m. 
	\end{equation}
	Indeed, the equation \eqref{09.16.6} is equivalent to the following equation 
	\begin{equation}\label{09.16.8}
		\im(\pt_t\vp^m, \Phi_n)_{L^2(\Om)}+a(\vp^m, \Phi_n)=(f^m, \Phi_n)_{L^2(\Om)}, \ n=1,\cdots, m, 
	\end{equation}
	where $a(\cdot,\cdot)$ and $\Phi_n$ are defined in \eqref{09.15.5} and \eqref{09.15.6}. 
	
	{\it Step 2}. Energy estimate. 
	
	Multiplying $\ol{\vp_n^m}$ on the both sides of \eqref{09.16.6}, summing $n=1,\cdots, m$, we obtain 
	\begin{equation}\label{09.16.9}
		\begin{split}
			\im \sum_{n=1}^m (\pt_t\vp_n^m)\ol{\vp_n^m}+\sum_{n=1}^m \la_n \vp_n^m \ol{\vp_n^m}=\sum_{n=1}^m f_n^m\ol{\vp_n^m}. 
		\end{split}
	\end{equation}
	Taking conjugate of \eqref{09.16.9}, we obtain 
	\begin{equation*}
		-\im \sum_{n=1}^m \ol{(\pt_t\vp_n^m)}{\vp_n^m}+\sum_{n=1}^m \la_n \vp_n^m \ol{\vp_n^m}=\sum_{n=1}^m \ol{f_n^m}{\vp_n^m}.
	\end{equation*}
	Then 
	\begin{equation*}
		\begin{split} 
			\f{\df}{\df t}\|\vp^m(t)\|_{L^2(\Om)}^2
			&=\sum_{n=1}^m \pt_t\left(\vp_n^m \ol{\vp_n^m}\right)=2\Image \sum_{n=1}^m f_n^m \ol{\vp_n^m}\\
			&=2\Image (f^m, \vp^m)_{L^2(\Om)}\leq \|f^m\|_{L^2(\Om)}^2+\|\vp^m\|_{L^2(\Om)}^2. 
		\end{split} 
	\end{equation*}
	The Gronwall's inequality implies 
	\begin{equation*}
		\begin{split}
			\|\vp^m(t)\|_{L^2(\Om)}^2\leq e^t \left(\|\vp^m(0)\|_{L^2(\Om)}^2+\int_0^t \|f^m(s)\|_{L^2(\Om)}^2\df s\right), \mbox{ for all } t\in [0,T]. 
		\end{split}
	\end{equation*}
	Hence, for all $m\in\N^*$, 
	\begin{equation}\label{09.16.10}
		\esssup_{t\in [0,T]}\|\vp^m(t)\|_{L^2(\Om)}^2\leq e^T\left(\|\vp^0\|_{L^2(\Om)}^2+\|f\|_{L^2(Q)}^2\right). 
	\end{equation}
	
	Multiplying \eqref{09.16.6} by $\ol{\pt_t\vp_n^m}$, summing $n=1,\cdots, m$, we get
	\begin{equation}\label{09.16.12}
		\begin{split}
			\im \sum_{n=1}^m |\pt_t\vp_n^m|^2+\sum_{n=1}^m \la_n \vp_n^m\ol{\pt_t\vp_n^m}=\sum_{n=1}^m f_n^m \ol{\pt_t \vp_n^m}. 
		\end{split}
	\end{equation}
	Taking conjugate of \eqref{09.16.12}, we obtain 
	\begin{equation*}
		\begin{split}
			-\im \sum_{n=1}^m |\pt_t\vp_n^m|^2+\sum_{n=1}^m \la_n \ol{\vp_n^m}\pt_t\vp_n^m=\sum_{n=1}^m \ol{f_n^m}\pt_t\vp_n^m. 
		\end{split}
	\end{equation*}
	Then 
	\begin{equation}\label{09.16.13}
		\begin{split}
			\f{\df}{\df t}a(\vp^m,\vp^m)
			&=2\Real \sum_{n=1}^m \la_n \vp_n^m \ol{\pt_t\vp_n^m}=2\Real \sum_{n=1}^m f_n^m\ol{\pt_t\vp_n^m}\\
			&=2\Real (f^m,\pt_t\vp^m)_{L^2(\Om)}\leq \|f^m\|_{H_0^1(\Om;w)}^2+\|\pt_t\vp^m\|_{H^{-1}(\Om;w)}^2. 
		\end{split}
	\end{equation}
	Note that from \eqref{09.15.6} and  \eqref{09.16.5} and $a(\Phi_n,v)=\la_n(\Phi_n,v)_{L^2(\Om)}$ for all $v\in H_0^1(\Om;w)$, we obtain 
	\begin{equation}\label{09.16.16}
		\begin{aligned}
			&\|\partial_t\varphi^m\|_{H^{-1}(\Omega;w)}\\
			&=
			\sup_{\|v\|_{H_0^1(\Omega;w)}\leq 1}
			\left|
			\langle \partial_t\varphi^m,v\rangle_{H^{-1}(\Omega;w),H_0^1(\Omega;w)}
			\right| =
			\sup_{\|v\|_{H_0^1(\Omega;w)}\leq 1}
			\left|
			(\partial_t\varphi^m,v)_{L^2(\Omega)}
			\right|
			\\
			&=
			\sup_{\|v\|_{H_0^1(\Omega;w)}\leq 1}
			\left|
			\sum_{n=1}^{m}
			\partial_t\varphi_n^m
			(\Phi_n,v)_{L^2(\Omega)}
			\right| =
			\sup_{\|v\|_{H_0^1(\Omega;w)}\leq 1}
			\left|
			\sum_{n=1}^{m}
			\lambda_n^{-1}
			\partial_t\varphi_n^m
			a(\Phi_n,v)
			\right|
			\\
			&\leq
			\sup_{\|v\|_{H_0^1(\Omega;w)}\leq 1}
			\left(
			\sum_{n=1}^{m}
			\lambda_n^{-1}
			|\partial_t\varphi_n^m|^2
			\right)^{\frac12}
			\left(
			\sum_{n=1}^{m}
			\lambda_n^{-1}
			|a(\Phi_n,v)|^2
			\right)^{\frac12} \leq
			\left(
			\sum_{n=1}^{m}
			\lambda_n^{-1}
			|\partial_t\varphi_n^m|^2
			\right)^{\frac12}.
		\end{aligned}
	\end{equation}
	Combining this and \eqref{09.16.13} and \eqref{09.16.6}, we get
	\begin{equation*}
		\begin{split}
			\f{\df}{\df t}a(\vp^m,\vp^m)
			&\leq \|f^m\|_{H_0^1(\Om;w)}^2+\sum_{n=1}^m |\pt_t\vp_n^m|^2\la_n^{-1}\\
			&\leq \|f^m\|_{H_0^1(\Om;w)}^2+2\sum_{n=1}^m \la_n|\vp_n^m|^2+2\sum_{n=1}^m \la_1^{-1} |f_n^m|^2 \leq C\|f^m\|_{H_0^1(\Om;w)}^2+2a(\vp^m,\vp^m), 
		\end{split}
	\end{equation*}
	where the positive constant $C$ depends only on $\al$. 
	By the Gronwall's inequality, we obtain 
	\begin{equation*}
		a(\vp^m(t),\vp^m(t))\leq e^{2t}\left( \|\vp^m(0)\|_{H_0^1(\Om;w)}^2+C\int_0^t \|f^m(s)\|_{H_0^1(\Om;w)}^2\df s\right), \mbox{ for all } t\in [0,T], 
	\end{equation*}
	and then 
	\begin{equation}\label{09.16.15}
		\begin{split}
			\esssup_{t\in [0,T]}\|\vp^m(t)\|_{H_0^1(\Om;w)}^2\leq e^{2T}\left(\|\vp^0\|_{H_0^1(\Om;w)}^2+C\|f\|_{L^2(0,T; H_0^1(\Om;w))}^2\right), 
		\end{split}
	\end{equation}
	where the positive constant $C$ depends only on $\al$. Combining this and \eqref{09.16.6} and  \eqref{09.16.16}, we obtain 
	\begin{equation}\label{09.16.17}
		\begin{split}
			\|\pt_t\vp^m\|_{L^2(0,T; H^{-1}(\Om;w))}^2
			&\leq 2\int_0^Ta(\vp^m,\vp^m)\df t+2\la_1^{-1}\int_0^T \|f^m\|_{L^2(\Om)}^2\df t\\
			&\leq Ce^{2T}(1+T)\left(\|\vp^0\|_{H_0^1(\Om;w)}^2+\|f\|_{L^2(0,T; H_0^1(\Om;w))}^2\right), 
		\end{split}
	\end{equation}
	where the positive constant $C$ depends only on $\al$. 
	
	{\it Step 3}. Approximation or convergence. 
	
	Combining \eqref{09.16.10} and \eqref{09.16.15} and \eqref{09.16.17}, there exists a subsequence of $\{\vp^m\}_{m\in\N^*}$, still denoted by itself, and 
	\begin{equation}\label{09.16.18}
		\wt\vp\in L^\iy(0,T; H_0^1(\Om;w)), \mbox{ with } \pt_t\wt\vp\in L^2(0,T; H^{-1}(\Om;w)), 
	\end{equation} 
	such that 
	\begin{equation}\label{09.16.19}
		\begin{split}
			\vp^m
			&\ra \wt\vp \mbox{ weak-* in } L^\iy(0,T; H_0^1(\Om;w)), \\
			\pt_t\vp^m
			&\ra \pt_t\wt\vp \mbox{ weakly in } L^2(0,T; H^{-1}(\Om;w)). 
		\end{split}
	\end{equation}
	Moreover, 
	\begin{equation}\label{09.16.20}
		\begin{split}
			&\esssup_{t\in [0,T]}\|\wt\vp(t)\|_{H_0^1(\Om;w)}^2+\|\pt_t\wt\vp\|_{L^2(0,T; H^{-1}(\Om;w))}^2\\
			&\leq Ce^{2T}(1+T)\left(\|\vp^0\|_{H_0^1(\Om;w)}^2+\|f\|_{L^2(0,T; H_0^1(\Om;w))}^2\right), 
		\end{split}
	\end{equation}
	where the positive constant $C$ depends only on $\al$.  
	Moreover, 
	\begin{equation}\label{09.16.24}
		\wt\vp\in C([0,T]; L^2(\Om)). 
	\end{equation}
	
	{\it Step 4}. We prove $\wt\vp$ is a weak solution of \eqref{09.16.1}. 
	
	Let $k\in\N^*$. Denote 
	\begin{equation}\label{09.16.21}
		\psi^k(x,t)=\sum_{n=1}^k \psi_n^k(t) \Phi_n(x), \mbox{ where $\psi_n^k(t)$ are smooth functions for }  n=1,\cdots, k. 
	\end{equation}
	Multiply $\psi_n^k(t)$ on the both sides of \eqref{09.16.8}, integrating on $[0,T]$, for all $m\geq k$, we obtain 
	\begin{equation}\label{09.16.23}
		\begin{split}
			\im \int_0^T (\pt_t\vp^m, \psi^k)_{H^{-1}(\Om;w), H_0^1(\Om;w)}\df t+\int_0^T a(\vp^m, \psi^k)\df t=\int_0^T (f^m, \psi^k)_{L^2(\Om)}\df t. 
		\end{split}
	\end{equation}
	Together with this and \eqref{09.16.19}, we get
	\begin{equation*}
		\begin{split}
			\im \int_0^T \lg\pt_t\wt\vp, \psi^k\rg_{H^{-1}(\Om;w), H_0^1(\Om;w)}\df t+\int_0^T a(\wt\vp, \psi^k)\df t=\int_0^T (f, \psi^k)_{L^2(\Om)}\df t. 
		\end{split}
	\end{equation*}
	Since $
	C^\infty([0,T])
	\otimes
	\mathrm{span}\{\Phi_n\colon n\in\mathbb N^\ast\}$ 
	is dense in
	$L^2(0,T;H_0^1(\Om;w))$,
	we obtain
	\begin{equation}\label{09.16.22}
		\im \int_0^T \lg\pt_t\wt\vp, \psi\rg_{H^{-1}(\Om;w), H_0^1(\Om;w)}\df t+\int_0^T a(\wt\vp, \psi)\df t=\int_0^T (f, \psi)_{L^2(\Om)}\df t 
	\end{equation}
	for all $\psi\in L^2(0,T; H_0^1(\Om;w))$. 
	
	For each $\zeta\in C_0^\iy(0,T)$ and $v\in H_0^1(\Om;w)$, from \eqref{09.16.22}, we obtain 
	\begin{equation*} 
		\im \int_0^T\zeta  \lg\pt_t\wt\vp, v\rg_{H^{-1}(\Om;w), H_0^1(\Om;w)}\df t+\int_0^T\zeta  a(\wt\vp, v)\df t=\int_0^T \zeta (f, v)_{L^2(\Om)}\df t. 
	\end{equation*}
	This shows 
	\begin{equation*}
		\lg \im \pt_t\wt\vp, v\rg_{H^{-1}(\Om;w), H_0^1(\Om;w)}+a(\wt\vp, v)=(f, v)_{L^2(\Om)}
	\end{equation*}
	for a.e.~$t\in [0,T]$ and all $v\in H_0^1(\Om;w)$. 
	
	Now, we show $\wt\vp(0)=\vp^0$. 
	
	From \eqref{09.16.23}, we obtain 
	\begin{equation*}
		-\im\int_0^T(\vp^m, \pt_t\psi^k)_{L^2(\Om)}\df t+\int_0^Ta(\vp^m, \psi^k)\df t=\int_0^T(f^m,\psi^k)_{L^2(\Om)}\df t+(\im\vp^m(0),\psi^k(0))_{L^2(\Om)}
	\end{equation*}
	for all $\psi^k$ in \eqref{09.16.21} satisfying $\psi^k(T)=0$. Together with this and \eqref{09.16.7} and \eqref{09.16.19}, we get
	\begin{equation*}
		\begin{split}
			-\im\int_0^T(\wt \vp, \pt_t\psi^k)_{L^2(\Om)}\df t+\int_0^Ta(\wt \vp, \psi^k)\df t=\int_0^T(f,\psi^k)_{L^2(\Om)}\df t+(\im\vp^0,\psi^k(0))_{L^2(\Om)}
		\end{split}
	\end{equation*}
	for all $\psi^k$ in \eqref{09.16.21} satisfying $\psi^k(T)=0$. Moreover, 
	\begin{equation}\label{09.16.25}
		\begin{split}
			-\im\int_0^T(\wt \vp, \pt_t\psi)_{L^2(\Om)}\df t+\int_0^Ta(\wt \vp, \psi)\df t=\int_0^T(f,\psi)_{L^2(\Om)}\df t+(\im\vp^0,\psi(0))_{L^2(\Om)}
		\end{split}
	\end{equation}
	for all $\psi\in C^1([0,T]; H_0^1(\Om;w))$ in \eqref{09.16.21} satisfying $\psi(T)=0$. From \eqref{09.16.24} and  \eqref{09.16.22}, we obtain 
	\begin{equation*}
		-\im \int_0^T( \wt\vp, \pt_t\psi)_{L^2(\Om)}\df t+\int_0^T a(\wt\vp, \psi)\df t=\int_0^T (f,\psi)_{L^2(\Om)}\df t+(\im \wt\vp(0), \psi(0))_{L^2(\Om)}
	\end{equation*}
	for all $\psi\in C^1([0,T]; H_0^1(\Om;w))$ in \eqref{09.16.21} satisfying $\psi(T)=0$. Together with this and \eqref{09.16.25}, we deduce that 
	\begin{equation*}
		\wt\vp(0)=\vp^0. 
	\end{equation*}
	
	{\it Step 5}. Uniqueness. 
	
	Since the equation \eqref{09.16.1} is linear, we only need to show: if $\vp^0=f=0$, then $\vp=0$. Taking $v=\vp$ as a test function in the weak formulation, we obtain 
	\begin{equation}\label{09.16.26}
		\im \int_0^T\lg \pt_t\vp,  \vp\rg_{H^{-1}(\Om;w), H_0^1(\Om;w)}\df t+\int_0^Ta(\vp, \vp)\df t=0. 
	\end{equation}
	Taking conjugate of \eqref{09.16.26}, we get
	\begin{equation*}
		-\im \int_0^T \lg\ol{\pt_t\vp},\ol \vp\rg_{H^{-1}(\Om;w), H_0^1(\Om;w)}\df t+\int_0^T a(\ol\vp,\ol  \vp)\df t=0. 
	\end{equation*}
	Combining this and \eqref{09.16.26} and $a(\vp,\vp)=\int_\Om \nabla \vp \cdot A\nabla\ol\vp\df x=a(\ol\vp,\ol\vp)$, we deduce that 
	\begin{equation*}
		\int_0^t\f{\df}{\df t}\|\vp(t)\|_{L^2(\Om)}^2\df t=\|\vp(t)\|_{L^2(\Om)}^2, \mbox{ for all }t\in [0,T]. 
	\end{equation*}
	Hence $\vp=0$. 
	
	{\it Step 6}. From Steps 4 and 5, we get the existence and uniqueness of weak solution for \eqref{09.16.1}. From \eqref{09.16.20} we obtain \eqref{09.16.4}. We complete the proof of this theorem. 
\end{proof}

\begin{proof}[Proof of Theorem \ref{09.17.T1}]
	The proof is similar to that of Theorem \ref{09.16.T1}. 
	Indeed, the Galerkin approximation, the energy estimates, and the
	compactness argument can be performed in the same way on
	$\Omega_\delta$. Therefore, we only need to verify that the estimates
	are uniform with respect to $\delta$.
	
	Since $\Omega_\delta\s  \Omega$ and $\delta\in(0,\f{1}{8})$, the estimates
	used in the proof of Theorem \ref{09.16.T1}, including the Gronwall
	inequality, the estimate of $
	\|\partial_t\vp_\delta^m\|_{H^{-1}(\Omega_\delta;w_\delta)}$, 
	and the corresponding spectral estimate, remain valid on
	$\Omega_\delta$. Hence, passing to the limit as in the proof of
	Theorem \ref{09.16.T1}, we obtain a unique weak solution satisfying
	\begin{equation*} 
		\begin{split}
			&\esssup_{t\in[0,T]}
			\|\vp_\delta(t)\|_{H_0^1(\Omega_\delta;w_\delta)}
			+\|\partial_t\vp_\delta\|_
			{L^2(0,T;H^{-1}(\Omega_\delta;w_\delta))}
			\\
			&\leq C\left(
			\|\vp_\delta^0\|_{H_0^1(\Omega_\delta;w_\delta)}
			+\|f_\delta\|_{L^2(0,T;H_0^1(\Omega_\delta;w_\delta))}
			\right),
		\end{split}
	\end{equation*}
	where $C$ depends only on $\alpha$ and $T$, but is independent of
	$\delta$.
\end{proof}

\end{document}